\numberwithin{equation}{section}
\newtheorem{theorem}{Theorem}[section]
\newtheorem{lemma}[theorem]{Lemma}
\newtheorem{proposition}[theorem]{Proposition}
\newtheorem{remark}[theorem]{Remark}
\newcommand{\RR}{\mathbb{R}}
\newcommand{\cO}{\mathcal{O}}
\newcommand{\CC}{\mathbb{C}}
\newcommand{\mS}{\mathfrak{S}}
\newcommand{\cH}{\mathcal{H}}
\def\hw{\hat{w}}
\def\hgamma{\hat{\gamma}}
\def\Frho{\widehat{\rho}}
\def\Trho{\widetilde{\rho}}
\def\Trho{\widetilde{\varphi}}
\def\Frho{\widehat{\rho}}
\def\Trho{\widetilde{\rho}}
\begin{document}

%


\title{Plasmons for the Hartree equations with Coulomb interaction} 

\author{Toan T. Nguyen\footnotemark[1]
\and Chanjin You\footnotemark[1]
}

\maketitle

\footnotetext[1]{Penn State University, Department of Mathematics, State College, PA 16802. Emails: nguyen@math.psu.edu, cby5175@psu.edu. The research is supported in part by the NSF under grant DMS-2054726.}

\maketitle

\begin{abstract}

In this work, we establish the existence and decay of {\em plasmons}, the quantum of Langmuir's oscillatory waves found in plasma physics, for the linearized Hartree equations describing an interacting gas of infinitely many fermions near general translation-invariant steady states, including compactly supported Fermi gases at zero temperature, in the whole space $\RR^d$ for $d\ge 2$. Notably, these plasmons exist precisely due to the long-range pair interaction between the particles. Next, we provide a survival threshold of spatial frequencies, below which the plasmons purely oscillate and disperse like a Klein-Gordon's wave, while at the threshold they are damped by {\em Landau damping}, the classical decaying mechanism due to their resonant interaction with the background fermions. The explicit rate of Landau damping is provided for general radial homogenous equilibria. Above the threshold, the density of the excited fermions is well approximated by that of the free gas dynamics and thus decays rapidly fast for each Fourier mode via {\em phase mixing}. Finally, pointwise bounds on the Green function and dispersive estimates on the density are established. 

\end{abstract}

\tableofcontents



\section{Introduction}


Consider the Hartree equation 
	\begin{equation}
		\label{Hartree}
		\begin{cases}
			\begin{aligned}
				&i \partial_{t} \gamma = [ - \Delta + w \star_x \rho_\gamma, \, \gamma ] \\
				&\gamma_{\vert_{t=0}} = \gamma_{0}
			\end{aligned}
		\end{cases}
	\end{equation}
describing the meanfield dynamics of an interacting gas containing infinitely many fermions in the whole space $\RR^d$ with $d\ge 2$, where the unknown $\gamma(t)$ is a self-adjoint, non-negative, and bounded operator on $L^2(\mathbb{R}^d)$ representing the one-particle density matrix and $\rho_\gamma(t,x) $ is the scalar density function associated to $\gamma(t)$, namely
\begin{equation}\label{def-density}\rho_\gamma(t,x) = \gamma(t,x,x)
\end{equation}
where $\gamma(t,x,y)$ is the integral kernel of $\gamma(t)$. Here, $[A,B] = AB - BA$. The nonlinear convolution term $w \star_x \rho_\gamma$ describes the meanfield interaction between the particles, where $w(x)$ is a given two-body interaction potential function. To fix the setting, we shall focus on the classical Coulomb potential
 \begin{equation}\label{Coulomb}
\omega(x) = \left\{ 
\begin{aligned}
-\log |x| , \qquad \mbox{for}\quad d=2, 
\\
\frac{1}{|x|^{d-2}}, \qquad \mbox{for}\quad d\ge 3,
\end{aligned}
\right.
\end{equation}
for which the long-range interaction (i.e. the fact that $\int w(x) \; dx$ is infinite) plays a role in this work that we shall describe shortly.

The Hartree equations \eqref{Hartree} can be derived from a many-body quantum system in the meanfield limit (e.g., \cite{Bardos1, Bardos2, Schlein1, Schlein2, Knowles, CLS}), and the global-in-time Cauchy problem \eqref{Hartree} has been resolved for trace class initial data with finite energy in \cite{Bove1, Bove2, Chadam, Zagatti}. 
In the case when the density matrix $\gamma(t)$ has the spectral decomposition of the form 
\begin{equation}\label{gamma-ex} \gamma(t) = \sum_j \lambda_j | u_j(t)\rangle \langle u_j(t)|
\end{equation}
for orthonormal functions $u_j(t)$ in $L^2(\RR^d)$, where $| u\rangle \langle v|$ denotes the operator $f\mapsto \langle v,f\rangle u$ from $L^2$ into itself, the problem \eqref{Hartree} reduces to solving a system of coupled Hartree equations for each scalar functions $u_j(t)$, namely 
	\begin{equation}
		\label{Hartree-uj}
		\begin{cases}
			\begin{aligned}
				&i \partial_{t} u_j =  (- \Delta + w \star_x \rho_\gamma ) u_j \\
				&{u_j}_{\vert_{t=0}} = u_j(0)
			\end{aligned}
		\end{cases}
	\end{equation}
for each $j$, in which the coupling is obtained through $\rho_\gamma(t,x) = \sum_j \lambda_j |u_j(t,x)|^2$. This formulation makes perfect sense when $\gamma(t)$ is a finite rank operator (i.e. $\lambda_j =0$, but a finite number of $j$'s) or in the finite trace class $\sum_j |\lambda_j|<\infty$, as studied in the aforementioned works. Evidently, the cubic nonlinear Schr\"odinger or standard Hartree equations are a special case of \eqref{Hartree-uj} when $\lambda_j =0$ for all, but  $\lambda_1\not =0$, which are well studied in the literature (\cite{Tao}). 

Recently, Lewin and Sabin \cite{Lewin1} developed a framework to establish a well-posedness theory of \eqref{Hartree} for a class of initial data with infinite trace as a suitable perturbation of translation-invariant steady states of the form 
\begin{equation}\label{steadystates}
\mu = \mu(-\Delta)
\end{equation}
with a finite constant density $\rho_\mu = \int_{\RR^d} \mu(|p|^2)\; dp$. 
The result was extended to include the delta interaction potential \cite{Chen1}. See also \cite{Lewin3} for uniform estimates in the semi-classical limit. When localized perturbations are sufficiently small in suitable Schatten spaces, the large time behavior and scattering of solutions to \eqref{Hartree} are also established in \cite{Lewin2, Chen2, Collot}. 
We note however that all these results were obtained for short-range interaction potentials, namely $w \in L^1(\RR^d)$, for which the linearized problem around the steady state \eqref{steadystates} has a trivial kernel, see Section \ref{sec-disp} below, leaving it widely open to include long-range interaction between the particles. As it turns out, long-range interaction potentials such as the classical Coulomb interaction \eqref{Coulomb} reveal new physics, as opposed to short range interaction carried out in the previous works (\cite{Lewin2, Chen2, Collot}), which causes substantial fundamental issues as we shall discuss below. This motivates the present work. 

\subsection{Dispersion relation}

Specifically, in this paper, we establish the large time behavior of solutions to the linearized Hartree equations near translation-invariant steady states of the form $\mu(-\Delta)$, which read
	\begin{equation}
		\label{linHartree}
		\begin{cases}
			\begin{aligned}
				&i \partial_{t} \gamma = [ - \Delta, \gamma] + [w \star_{x} \rho_\gamma, \,\mu ] \\
				&\gamma_{\vert_{t=0}}= \gamma_{0},
			\end{aligned}
		\end{cases}
	\end{equation}
where $\rho_\gamma(t,x) $ is the scalar density function associated to $\gamma(t)$. The linearized problem can in fact be solved completely mode by mode for each spatial frequency. Indeed, denoting by $\Trho_\gamma(\lambda,k)$ the Fourier-Laplace transform of the density function $\rho_\gamma(t,x)$,  
we obtain 
\begin{equation}\label{linresolvent}
\begin{aligned}
\Trho_\gamma(\lambda,k) &= \frac{1}{D(\lambda,k)} \Trho_{\gamma^0}(\lambda,k),
\end{aligned}
\end{equation}
for $\lambda \in \CC$ and $k\in \RR^d$, where $\Trho_{\gamma^0}(\lambda,k)$ denotes the Laplace-Fourier transform of the density $\rho_{\gamma^0}(t,x)$ generated by the free Hartree dynamics $i \partial_{t} \gamma^0 = [ - \Delta, \gamma^0]$ with the same initial data $\gamma_0$. Namely, $\rho_{\gamma^0}$ is the density function of $\gamma^0 = e^{it\Delta}\gamma_0 e^{-it\Delta}$, while $D(\lambda,k)$ is the symbol for the linearized problem \eqref{linHartree} in the spacetime frequency space, see Section \ref{sec-FT}. In other words, the resolvent equation \eqref{linresolvent} asserts that the density of the linearized Hartree equations \eqref{linHartree} is the spacetime convolution of an integral kernel against the density of the free Hartree dynamics $\rho_{\gamma^0}$, where the spacetime integral kernel is the inverse Laplace-Fourier transform of $\frac{1}{D(\lambda,k)}$. 

For the free Hartree dynamics $\gamma^0 = e^{it\Delta}\gamma_0 e^{-it\Delta}$, the dispersion of free particles is measured through the Strichartz estimates for the density function: namely, we recall from \cite{Rupert, Rupert2} that 
\begin{equation}\label{free-Sch}
\| \rho_{\gamma^0} \|_{L^p_tL^q_x} \lesssim \| \gamma_0\|_{\mS^{\frac{2q}{q+1}}}
\end{equation}
for any pair $(p,q)$ so that $2/p+d/q = d$ and $1\le q < 1+2/(d-1)$, where $\mS^q$ denotes the $q$-Schatten space of the compact operators $A$ with a finite trace $\operatorname{tr}(|A|^q)$. In fact, for spatially localized initial perturbations $\gamma_0$, the density $ \rho_{\gamma^0}(t,x)$ has its Fourier transform $\Frho_{\gamma^0}(t,k)$ that decays rapidly fast in $\langle kt\rangle$, exponentially fast (or  polynomially fast) for exponentially localized data $\gamma_0$ (resp. polynomially localized), resembling the {\em phase mixing} behavior of the free gases, see Section \ref{sec-freeHartree} for the details. 

It remains to study the spacetime symbol $D(\lambda,k)$ in the resolvent equation \eqref{linresolvent}, also known as the Lindhard dielectric function in quantum mechanics \cite{Lindhard, GV, LM}, a quantum analogue of the classical dielectric function in plasma physics  \cite{Trivelpiece}, which is computed by 
\begin{equation}\label{def-Dintro}
\begin{aligned}
D(\lambda,k)
&= 1 + i\hw(k)\int_{\RR^d} \frac{\mu(|p|^2) - \mu(|p-k|^2)}{\lambda -2ik \cdot p + i|k|^2} \;dp 
\end{aligned}
\end{equation}
where $\hw(k)$ denotes the Fourier transform of the interaction potential $w(x)$, see Lemma \ref{lem-resolvent}. For Coulomb interaction \eqref{Coulomb}, $\hw(k) = |k|^{-2}$.  Note that for each wave number $k\in \RR^d$, the zeros $\lambda_*(k)$ of the dispersion relation 
\begin{equation}\label{0disp}
D(\lambda_*(k),k)=0
\end{equation} 
are the eigenvalues of the linearized Hartree equations \eqref{linHartree}. As a result of \eqref{linresolvent}, the leading dynamics of the density is governed by the residual of $e^{\lambda t}\frac{1}{D(\lambda,k)}$ at $\lambda = \lambda_*(k)$, upon taking the inverse of Laplace transform.

\subsection{Penrose-Lindhard's stable regime}\label{sec-disp}

In view of the density representation \eqref{linresolvent}, a natural stability condition reads
\begin{equation}\label{Penrose-Lindhard}
\inf_{k\in \RR^d} \inf_{ \Re \lambda \ge 0} |D(\lambda,k)|  \ge \theta_0 >0,
\end{equation}
which not only asserts that no growth mode of \eqref{linHartree} exists but also assures the invertibility of the linearized operator, say in $L^p_{t,x}$ spaces, in view of the classical Stein and Marcinkiewicz's theorem for the spacetime Fourier multipliers $\frac{1}{D(\lambda,k)} $. 
The stability condition \eqref{Penrose-Lindhard} bears the name of Penrose in plasma physics \cite{Penrose,Trivelpiece} that plays a central role in establishing the large time behavior of solutions to the nonlinear Vlasov-Poisson system \cite{MV, BMM-apde,GNR1,GNR2, HKNR2}. The stability condition has a quantum analog \cite{Lindhard,GV, LM}. In their important work \cite{Lewin2}, Lewin and Sabin have verified \eqref{Penrose-Lindhard} for a large class of equilibria $\mu(|p|^2)$, for smooth and short-range interaction potentials, under which the large time dynamics of solutions to the nonlinear problem \eqref{Hartree} is governed by that of the free Hartree dynamics \eqref{free-Sch}. The result was first proved for dimension $d=2$ in \cite{Lewin2} and later extended to higher dimensions in \cite{Chen2, Collot}. 

It turns out that for interaction potentials such that $\int w(x) \; dx$ is infinite (i.e. long-range), the stability condition \eqref{Penrose-Lindhard} never holds for {\em any steady states!} New physics arises.  

\subsection{Plasmons}


We shall show that the stability condition \eqref{Penrose-Lindhard} always fails for any reasonable equilibria due to the emergence of oscillatory modes  of \eqref{linHartree} at the low frequency, which we shall refer to as {\em plasmons}, namely the quantum of plasma oscillations found in plasma physics \cite{Trivelpiece, Toan}, see also \cite{Glassey, Glassey1, HKNR3, BMM-lin, Ionescu1}. For the pioneering works and the recent study of plasmons, see  \cite{Fetter, Pines,Nam1, Nam2}. More precisely, for the quantum model \eqref{linHartree}, we show that there is a ``survival threshold'' of wave numbers $\kappa_0$, below which the stability condition \eqref{Penrose-Lindhard} fails and plasmons emerge. Above the threshold, these plasmons are damped due to their resonant interaction with the background particles, known as {\em Landau damping} in plasma physics, which we shall now describe.  


Let $\mu(-\Delta)$ be a general smooth steady state of \eqref{Hartree} with a finite constant density $\rho_\mu$. We set 
\begin{equation}\label{def-Upsilon} \Upsilon: = \sup \Big\{ |p|, \qquad \mu(|p|^2) \not =0 \Big\}
\end{equation}
to be the maximal speed of particle velocities. For positive equilibria, $\Upsilon = \infty$, however for compactly supported equilibria such as Fermi gases at zero temperature, $\Upsilon < \infty$. We then introduce the survival threshold $\kappa_0$ defined through the relation
 \begin{equation}\label{def-introkappa0}
\kappa_0^2 =  \frac{1}{2} \int_{|u|<\Upsilon} \frac{\varphi(u)}{(\Upsilon-u)(\Upsilon+\kappa_0-u)} \, du
\end{equation}
in which $\varphi(u) = \int_{\RR^{d-1}} \mu(u^2 + |w|^2)\; dw$. As equilibria $\mu(\cdot)$ are non negative and decay sufficiently fast as $u\to \Upsilon$, the threshold number $\kappa_0$ is well-defined and finite. Here, we in fact have used $\hw(k) = |k|^{-2}$ in deriving \eqref{def-introkappa0}. For more general long-range interaction potentials, see Remark \ref{rem-genw}. Note that $\kappa_0=0$ if $\Upsilon=\infty$ (e.g., when $\mu(\cdot)$ is a Gaussian), while $\kappa_0$ is nonzero and may be large if $\Upsilon<\infty$ (e.g., when $\mu(\cdot)$ is compactly supported). Throughout the paper, we consider equilibria with connected support, namely $\mu(|p|^2)>0$, whenever $|p|<\Upsilon$.

\bigskip

Our main results, Theorem \ref{theo-mainLandau} below, assert that    

\begin{itemize}

\item {\em Plasmons:} for $0\le |k|\le \kappa_0$, there are exactly two pure imaginary solutions $\lambda_\pm(k) = \pm i \tau_*(k)$ of the dispersion relation $D(\lambda,k)=0$, which obey a Klein-Gordon type dispersion relation 
$$\tau_*(k) \approx \sqrt{1 + |k|^2}$$ 
(and in particular, $\tau_*(k)$ is a strict convex function in $|k|$). These oscillatory modes experience no Landau damping $\Re \lambda_\pm(k)=0$, but disperse in space, since the group velocity $\tau_*'(k)$ is strictly increasing in $|k|$. These oscillations are known as Langmuir's waves in plasma physics \cite{Trivelpiece, Toan} and plasmons in quantum mechanics \cite{Fetter, Pines,Nam1, Nam2}.

\item {\em Landau damping:} as $|k|$ increases past the critical wave number $\kappa_0$, the phase velocity of oscillatory plasmons enters the range of admissible particle velocities, namely $|\nu_*(k)| <\Upsilon$. That is, there are particles that move at the same propagation speed of the waves. This resonant interaction causes the dispersion functions $\lambda_\pm(k)$ to leave the imaginary axis, and thus the purely oscillatory modes get damped. 

In plasma physics, Landau \cite{Landau} computed this damping rate for the linearized Vlasov-Poisson system near Gaussians (and hence, $\kappa_0=0$), which bears his name. The Landau damping rate $\Re \lambda_\pm(k)$ was also well-documented for general equilibria \cite{Trivelpiece}, see also \cite{Toan} for the rigorous derivation, which asserts that the faster equilibria vanish at the maximal speed, the weaker Landau damping is. Our first main theorem, Theorem \ref{theo-mainLandau}, establishes Landau damping for the quantum model \eqref{linHartree}.

\item {\em Penrose-Lindhard's stable regime:} for $|k| > \kappa^+_0$, the strong Penrose-Lindhard stability condition \eqref{Penrose-Lindhard} holds, and therefore the behavior of the density function is governed by that of the free Hartree dynamics as discussed in the previous section. We stress that no monotonicity assumption on the steady states is assumed in dimensions $d\ge 3$. 
   
\end{itemize}

\subsection{Equilibria}\label{sec-mu}

Throughout this paper, we consider equilibria of the form $\mu = \mu(-\Delta)$ as a Fourier multiplier, where $\mu(\cdot)$ is a nonnegative function in $\mathbb{R}_{+}$, satisfying 
\begin{itemize}

\item $\mu(|p|^2) >0$ for all $|p|< \Upsilon$, for $\Upsilon$ defined as in \eqref{def-Upsilon}. 

	\item $\mu$ is of class $C^{n_{0}}$ for some $n_{0} \ge \frac{d+15}{2}$.
	\item In the case when $\Upsilon = \infty$,
		\begin{equation}\label{decay-mu}
			| \mu(e)| \lesssim \langle e \rangle^{-n_{1}}, \qquad \forall e \ge 0
		\end{equation}
		for some $n_{1} \ge n_{0} + \frac{d+1}{2}$.
	\item In the case when $\Upsilon < \infty$,
		\begin{equation}\label{decay-mu2}
			\lim_{|p| \rightarrow \Upsilon} \frac{\mu(|p|^2)}{(\Upsilon - |p|)^{n_{1}}}
		\end{equation}
		exists and is positive for some $n_{1} \ge \min\{ 1,\frac{5-d}{2}\}$.
	\item In dimension $d=2$, we further assume that $\mu' (e) \le 0$ for $e \ge 0$. 
\end{itemize}

Both the regularity and decaying rate assumptions on $\mu(e)$ are not optimal, and we made no attempts in optimizing them in this paper, despite the fact that they play a crucial role in deriving the decay estimates and in calculating the Landau damping rate. We stress that no monotonicity was made on the equilibria in high dimensions $d\ge 3$. 
Apparently, there are many equilibria that satisfy the above assumptions, including any radial functions in $p$ whose support is connected and equal to $\{ |p| < \Upsilon\}$. This includes important physical examples such as Fermi gases, Bose gases, and Boltzmann equilibria at positive temperatures. It also includes the important Fermi gases at zero temperature, namely compactly supported steady states, up to the above regularity requirement on $\mu(\cdot)$. 

\subsection{Main results}

We are now ready to state the first main result of this paper.

\begin{theorem}\label{theo-mainLandau} 
	Let $\mu(|p|^2)$ be the equilibrium as described in Section \ref{sec-mu}, with a finite constant density $\rho_\mu = \int_{\RR^d} \mu(|p|^2)\; dp$.  Let $\Upsilon$ be the maximal particle velocity defined as in \eqref{def-Upsilon}, and $\kappa_{0}$ be the non-negative critical wave number defined through \eqref{def-introkappa0}. The corresponding spacetime symbol $D(\lambda,k)$ for the linearized Hartree equation \eqref{linHartree} satisfies the following:
	\begin{itemize}
		\item For each $k \in \mathbb{R}^d$, $D(\lambda, k)$ is analytic and nonzero in $\Re \lambda >0$.
		\item For $0 \le |k| \le \kappa_{0}$, there are exactly two pure imaginary zeros $\lambda_\pm= \pm i \tau_*(k)$ of the dispersion relation $ D(\lambda,k) = 0$, where $\tau_*(k)$ is sufficiently regular, strictly increasing in $|k|$, with\footnote{Note that by definition \eqref{def-introkappa0}, $\lim_{\Upsilon \to \infty}2\kappa_0\Upsilon = \sqrt{2\rho_\mu} $, and so the identities in \eqref{tau-a01} are consistent when $\kappa_0 = 0$ and $\Upsilon = \infty$.} 
		\begin{equation}\label{tau-a01}
			\tau_{*}(0)=\sqrt{2\rho_\mu} , \qquad \tau_*(\kappa_0) = 2\kappa_0\Upsilon + \kappa_0^2.
		\end{equation}
In particular, there are positive constants $c_0$ and $C_0$ such that 
		\begin{equation}\label{KGdisp}
			c_0 |k|\le \tau'_*(k) \le C_0|k|, \qquad  c_0 \le \tau''_*(k) \le C_0,
		\end{equation}
		for $0\le |k|\le \kappa_0$. 
	
		\item There is a constant $\delta_{0} >0$ so that the zeros $\lambda_{\pm}(k)$ of $D(\lambda, k) =0$ can be extended for $\kappa_{0} \le |k| \le \kappa_{0} +\delta_{0}$, in such a way that they are sufficiently regular in $|k|$, $\lambda_\pm(\kappa_0) = \pm i \tau_*(\kappa_0)$, and $\tau_*(k) = \mp \Im \lambda_\pm(k)$ satisfy \eqref{KGdisp} for $\kappa_{0} \le |k| \le \kappa_{0} +\delta_{0}$.
				 In addition, the following Landau's law of damping holds:
		
		\begin{itemize}
			
			\item[(i)] If $\Upsilon = \infty$, then for $|k| \le \delta_{0}$, we have 
			\begin{equation}\label{Landau-intro}
				\Re \lambda_\pm(k) = {\pi}[u^2 \varphi'(u)]_{u = \nu_* (k)} ( 1 + \mathcal{O}(|k|))
			\end{equation}
			where $\nu_*(k) = \frac{\tau_0 }{2|k|}$ is the phase velocity, with $\tau_0 = \sqrt{2\rho_\mu}$.

			\item[(ii)] If $\Upsilon <\infty$, then for $\kappa_{0} \le |k| \le \kappa_{0} + \delta_{0}$, we have  
			\begin{equation}\label{Landau-cpt-intro}
				\Re \lambda_\pm(k) = - {\frac{\pi}{2 \widetilde{\kappa}_{1}(\kappa_{0}) |k|}} \left[ \varphi(u - \frac{|k|}{2}) - \varphi(u + \frac{|k|}{2}) \right]_{u = \nu_*(k)} (1 + \cO(|k| - \kappa_0)),
			\end{equation}
			where $\nu_*(k) =  \Upsilon + \frac{|k|}{2} - \theta_0(|k| - \kappa_{0})$ is the phase velocity for some positive constant $\theta_0$ (see Theorem \ref{theo-Landau} for the precise value of $\theta_0$).

		\end{itemize}
		\item For any $\delta >0$, there is a constant $c_{\delta}>0$ such that
		\begin{equation}\label{Penrose-intro}
			\inf_{|k| \ge \kappa_{0} + \delta} \inf_{ \Re \lambda \ge 0} |D(\lambda,k)|  \ge c_{\delta}.
		\end{equation}
	\end{itemize}
\end{theorem}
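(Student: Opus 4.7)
The starting point is the reduction of $D(\lambda,k)$ to a one-dimensional integral. Choosing coordinates $p = (k/|k|)u + w$ with $w\perp k$ and integrating out $w$ via the marginal $\varphi(u) = \int_{\RR^{d-1}}\mu(u^2 + |w|^2)\,dw$, the defining expression \eqref{def-Dintro} becomes
\begin{equation}\label{Dreduced}
D(\lambda,k) = 1 + \frac{i}{|k|^2}\int_\RR \frac{\varphi(u) - \varphi(u-|k|)}{\lambda - 2i|k|u + i|k|^2}\,du.
\end{equation}
A shift $u\mapsto u+|k|$ in one of the two pieces gives the equivalent form
\begin{equation}\label{Dalt}
D(\lambda,k) = 1 + 2\int_\RR \frac{\varphi(u)}{(\lambda - 2i|k|u)^2 + |k|^4}\,du,
\end{equation}
which will be convenient for the sign analysis below. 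For $\Re\lambda > 0$ the denominator in \eqref{Dreduced} never vanishes, so $D(\cdot, k)$ is analytic in the open right half plane. Non-vanishing there I plan to obtain by a Nyquist-type argument combining $D\to 1$ as $|\lambda|\to\infty$ with the imaginary-axis analysis carried out below.

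Next I analyze $D$ on the imaginary axis. Substituting $\lambda = i\tau$ into \eqref{Dalt} and factoring $|k|^4 - (\tau-2|k|u)^2 = -(\tau-2|k|u-|k|^2)(\tau-2|k|u+|k|^2)$ shows that $D(i\tau, k)$ is real and well-defined for $|\tau| > 2|k|\Upsilon + |k|^2$. On this range $\tau\mapsto D(i\tau, k)$ is strictly monotone, tends to $1$ as $|\tau|\to \infty$, and approaches at the edge the value
\begin{equation}\label{edge}
D(\pm i(2|k|\Upsilon+|k|^2), k) = 1 - \frac{1}{2|k|^2}\int_{|u|<\Upsilon}\frac{\varphi(u)}{(\Upsilon-u)(\Upsilon+|k|-u)}\,du.
\end{equation}
By the defining identity \eqref{def-introkappa0}, the right-hand side of \eqref{edge} vanishes at $|k| = \kappa_0$ and is strictly increasing in $|k|$, so it is negative for $|k|<\kappa_0$ and positive for $|k|>\kappa_0$. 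The intermediate value theorem together with strict $\tau$-monotonicity then gives a unique zero $\tau_*(k)\in[2|k|\Upsilon+|k|^2,\infty)$ for $0\le |k|\le\kappa_0$, and no pure imaginary zero for $|k|>\kappa_0$. Smoothness of $\tau_*$ comes from the implicit function theorem. The endpoint values $\tau_*(0)=\sqrt{2\rho_\mu}$ and $\tau_*(\kappa_0) = 2\kappa_0\Upsilon + \kappa_0^2$ come from Taylor-expanding \eqref{Dreduced} at $|k|=0$ (using $\int u\varphi'(u)\,du = -\rho_\mu$ to get $D(\lambda,0) = 1 + 2\rho_\mu/\lambda^2$) and from the construction of $\kappa_0$, respectively. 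Differentiating the implicit equation $D(i\tau_*(k),k)=0$ once and twice in $|k|$ and controlling the signs of the resulting integrals against $\varphi$ yields the Klein-Gordon bounds \eqref{KGdisp}.

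To continue the roots past the threshold I use the Landau contour deformation. For $\lambda$ near $\pm i\tau_*(\kappa_0)$ and $|k|\ge \kappa_0$, the resonance point $u_* = (\tau+|k|^2)/(2|k|)$ enters the interior of the integration domain, and the analytic continuation $D^\sharp(\lambda, k)$ from the open right half plane across the imaginary axis is obtained by deforming the $u$-contour in \eqref{Dreduced} around the pole. The Plemelj--Sokhotski formula gives, for $\lambda = i\tau$ in the resonance window $|\tau|<2|k|\Upsilon+|k|^2$,
\begin{equation}\label{jump}
\Im D^\sharp(i\tau,k) = \frac{\pi}{2|k|^3}\bigl[\varphi(u_*)-\varphi(u_*-|k|)\bigr]_{u_* = (\tau+|k|^2)/(2|k|)}.
\end{equation}
Applying the implicit function theorem to $D^\sharp(\lambda, k) = 0$ with seed data $\lambda_\pm(\kappa_0)=\pm i\tau_*(\kappa_0)$ extends the roots to $\kappa_0 \le |k|\le \kappa_0 + \delta_0$, the imaginary parts inheriting the bounds from \eqref{KGdisp}. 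Linearizing $D^\sharp$ in $\lambda$ about $\pm i\tau_*(k)$ and solving gives the leading-order expression $\Re \lambda_\pm(k) \approx -\Im D^\sharp(\pm i\tau_*(k), k)/\partial_\tau D^\sharp(\pm i\tau_*(k), k)$, whose numerator is exactly \eqref{jump}. Expanding the bracket in \eqref{jump} in the two asymptotic regimes---small $|k|$ with $\Upsilon = \infty$, where $\varphi(u_*)-\varphi(u_*-|k|)\sim |k|\varphi'(u_*)$ and $u_*\sim \nu_*(k)$, versus $|k|\to \kappa_0^+$ with $\Upsilon < \infty$, where the full difference must be retained because $u_*$ and $u_*-|k|$ straddle the edge $\Upsilon$---produces respectively the two formulas \eqref{Landau-intro} and \eqref{Landau-cpt-intro}, the constant $\widetilde{\kappa}_{1}(\kappa_0)$ and the slope $\theta_0$ emerging from matching $\partial_\tau D^\sharp$ and $\partial_{|k|} D^\sharp$ at the threshold.

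Finally, the Penrose-Lindhard bound \eqref{Penrose-intro} for $|k|\ge \kappa_0 + \delta$ combines three ingredients: (i) absence of zeros in $\{\Re\lambda > 0\}$ from the first paragraph; (ii) a strict lower bound on $|\Im D(i\tau, k)|$ inside the resonance window $|\tau|<2|k|\Upsilon+|k|^2$, supplied by \eqref{jump} and the positivity of $\varphi$ on $(-\Upsilon, \Upsilon)$; and (iii) a strict lower bound on $|D(i\tau,k)|$ outside the resonance window, which for $|k|>\kappa_0$ is provided by \eqref{edge} and monotonicity. The main obstacle running through the argument is controlling $D^\sharp$ and its $(\lambda, k)$-derivatives uniformly in $k$ at the threshold $|k|=\kappa_0$ in the compactly supported case: there the resonance point approaches the boundary of the support of $\varphi$ where the latter vanishes to order $n_1$ by \eqref{decay-mu2}, and both the nondegeneracy of the implicit function argument for $D^\sharp$ and the precise form of the damping law \eqref{Landau-cpt-intro} rest on a delicate accounting of this boundary behavior. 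The additional monotonicity assumption on $\mu$ in dimension $d=2$ serves the parallel role of keeping the real part of \eqref{Dreduced} non-degenerate in the small-frequency regime where the sign of $\varphi'$ would otherwise be unconstrained.
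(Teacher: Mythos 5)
Your reduction to the marginal $\varphi$, the monotonicity of $\tau\mapsto D(i\tau,k)$ outside the window $\{|\tau|\le 2|k|\Upsilon+|k|^2\}$, the sign change of the edge value at $|k|=\kappa_0$ coming from \eqref{def-introkappa0}, and the implicit-function/differentiation scheme for \eqref{KGdisp} all track the paper's Theorem \ref{theo-LangmuirE}. The first genuine gap is the continuation across the imaginary axis. You construct $D^\sharp$ by ``deforming the $u$-contour around the pole,'' i.e.\ Landau's analytic continuation; but the equilibria here are only $C^{n_0}$, so $\varphi$ is merely $C^{N_0}$ and has no analytic continuation off the real line --- and in the compactly supported case, the only one in which $\kappa_0>0$ and case (ii) is nontrivial, the resonance point lies within $O(|k|-\kappa_0)$ of the endpoint $\Upsilon$ of the support of $\varphi$, where $\varphi$ vanishes to the finite order prescribed by \eqref{decay-varphi2}. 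There is no contour to deform around. The paper's Theorem \ref{theo-Landau} instead expands the dispersion relation to finite order (the expansions \eqref{disp1} and \eqref{disp1-cpt}), extends only the remainders $\mathcal{R}_1,\mathcal{R}_2$ across the axis as finitely smooth functions of $(\Re\lambda,\Im\lambda)$ via Whitney's extension theorem, applies the smooth implicit function theorem, and then recovers the damping rate from a Plemelj computation of the imaginary part of the remainder on the boundary. Your boundary jump formula agrees with the paper's boundary values, but without a licit smooth extension the continuation of $\lambda_\pm$ past $\kappa_0$, the error factors in \eqref{Landau-intro}--\eqref{Landau-cpt-intro}, and the constants $\widetilde{\kappa}_1(\kappa_0)$ and $\theta_0$ (which emerge from \eqref{disp1-cpt}) are not justified.

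Second, your exclusion of imaginary zeros inside the resonance window and your route to \eqref{Penrose-intro} lean on ``a strict lower bound on $|\Im D(i\tau,k)|$ inside the window, supplied by the positivity of $\varphi$.'' This fails at $\tau=0$: by \eqref{form-Ditau2} the imaginary part is proportional to $\varphi\left(\frac{\tilde\tau+|k|}{2}\right)-\varphi\left(\frac{\tilde\tau-|k|}{2}\right)$, which vanishes identically at $\tilde\tau=0$ because $\varphi$ is even; away from $\tilde\tau=0$ the relevant input is the strict monotonicity of $\varphi$ on $(0,\Upsilon)$ (this is exactly where the extra $d=2$ hypothesis $\mu'\le 0$ enters), not positivity. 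Consequently both the ``exactly two'' count for $|k|\le\kappa_0$ (your intermediate-value argument only treats $|\tau|\ge 2|k|\Upsilon+|k|^2$) and the uniform bound \eqref{Penrose-intro} additionally require a real-part bound at the crossing, namely $D(0,k)\ge 1$, which the paper obtains by the principal-value computation in Proposition \ref{prop:nonexistence}. Finally, the first bullet (no zeros in $\Re\lambda>0$) is left as a ``planned'' Nyquist argument; note that for $|k|<\kappa_0$ the curve $\tau\mapsto D(i\tau,k)$ passes through the origin at $\pm\tau_*(k)$ and is negative on part of the real segment beyond the window, so a winding-number count would need an indented contour and real care, whereas the paper argues by contradiction directly on the real and imaginary parts of $\mathcal{H}(z)-\mathcal{H}(z+|k|)=2|k|/\hat{w}(k)$ in Proposition \ref{prop-nogrowth}.
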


Theorem \ref{theo-mainLandau} confirms the existence of plasmons and the physical discussions given in the previous sections. As a matter of facts, plasmons exist for more general long-range pair interaction potentials $w(x)$, provided that its Fourier transform $\hw(k)$ satisfies
\begin{equation}\hw(k)\ge 0, \qquad \hat{w}(0) = \infty
, \qquad \lim_{|k|\to \infty}\hat{w}(k) <\infty, \qquad \hw'(k) \le 0.\end{equation} 
See the proof of Theorem \ref{theo-LangmuirE} and Remark \ref{rem-genw}. However, their dispersive properties may alter from those stated in Theorem \ref{theo-mainLandau}.  
In addition, our results assert that the Langmuir oscillations survive Landau damping for all the wave numbers less than the survival threshold $\kappa_0$, while Landau's law of damping is present and explicitly computed at $\kappa_0$, see \eqref{Landau-intro}-\eqref{Landau-cpt-intro}. Beyond the threshold $\kappa_0$, the strong stability condition \eqref{Penrose-intro} is ensured, and the free Hartree dynamics may be a good approximation for the large time behavior of solutions to the linearized Hartree problem.

Our next main result captures the leading oscillatory behavior of the linearized Hartree density and  provides quantitative decay estimates of the remainder. Precisely, we obtain the following.  

\begin{theorem}\label{theo-main} 
Fix $N_{2} > d+2$ and $N_3>d$. Let $\gamma_{0}$ be the initial matrix operator whose integral kernel $\gamma_0(x,y)$ satisfies 
\begin{equation}\label{assmp-g0} 
\sum_{|\alpha|+|\beta|\le N_3 }\| (1+|x|+|y|)^{N_2} \partial_x^\alpha \partial_y^\beta\gamma_0\|_{L^1_{x,y}} < \infty. 
\end{equation}
Then, the density function associated with $\gamma(t)$ of the linearized Hartree equations \eqref{linHartree} can be written as
	\begin{equation}
		\rho(t,x) = \sum_{\pm} \rho^{osc}_{\pm}(t,x) + \rho^{r}(t,x),
	\end{equation}
where
	\begin{equation}
		\begin{aligned}
			\|\rho^{osc}_{\pm}(t) \|_{L^{p}_{x}} &\lesssim \langle t \rangle^{-d (1/2 - 1/p)}, \qquad p \in [2,\infty], \\
			\|\rho^{r}_{\pm}(t) \|_{L^{p}_{x}} & \lesssim \langle t \rangle^{-2-d (1- 1/p)} \log (1+t), \qquad p \in [1,\infty].
		\end{aligned}
	\end{equation}
\end{theorem}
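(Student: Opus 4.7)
The plan is to start from the resolvent identity \eqref{linresolvent} and perform a contour deformation of the Bromwich inversion past the imaginary axis, extracting the poles $\lambda_\pm(k)$ located by Theorem \ref{theo-mainLandau}. Writing
\begin{equation*}
\rho_\gamma(t,x) = \int_{\RR^d} e^{ik\cdot x}\,\frac{1}{2\pi i}\int_{\sigma-i\infty}^{\sigma+i\infty} e^{\lambda t}\,\frac{\Trho_{\gamma^0}(\lambda,k)}{D(\lambda,k)}\,d\lambda\,dk,\qquad \sigma>0,
\end{equation*}
we split the $k$-integration by a smooth cutoff $\chi(k)$ equal to $1$ on $\{|k|\le \kappa_0+\delta_0/2\}$ and vanishing on $\{|k|\ge \kappa_0+\delta_0\}$, with $\delta_0$ from Theorem \ref{theo-mainLandau}. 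On the cutoff region, the two simple zeros $\lambda_\pm(k)$ are the only obstructions to shifting the contour to $\{\Re\lambda=-\eta_0\}$ for a small $\eta_0>0$, and the residue theorem extracts
\begin{equation*}
\rho^{osc}_\pm(t,x) := \int_{\RR^d}\chi(k)\,e^{ik\cdot x}\,e^{\lambda_\pm(k)t}\,\frac{\Trho_{\gamma^0}(\lambda_\pm(k),k)}{\partial_\lambda D(\lambda_\pm(k),k)}\,dk,
\end{equation*}
leaving a shifted-contour remainder that carries an overall factor $e^{-\eta_0 t}$. Off the cutoff support, \eqref{Penrose-intro} provides $|1/D(\lambda,k)|\lesssim 1$ on $\{\Re\lambda\ge 0\}$, so the contour may be slid to the imaginary axis and the contribution written as a spacetime Fourier multiplier acting on the free Hartree density $\rho_{\gamma^0}$. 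These two off-residue pieces together constitute $\rho^r$.

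For the oscillatory part, on $\{|k|\le\kappa_0\}$ we have $\lambda_\pm(k)=\pm i\tau_*(k)$, while on the Landau-damping strip $\kappa_0<|k|<\kappa_0+\delta_0$ we have $\Re\lambda_\pm(k)\le 0$ and $\tau_*(k)=\mp\Im\lambda_\pm(k)$ continues to obey \eqref{KGdisp}. The phase $k\cdot x\pm t\tau_*(k)$ is therefore a uniformly non-degenerate Klein-Gordon phase: the radial Hessian of $\tau_*(|k|)$ has eigenvalues $\tau_*''(|k|)$ and $\tau_*'(|k|)/|k|$, both pinned between $c_0$ and $C_0$. The amplitude $\chi(k)\,\Trho_{\gamma^0}(\lambda_\pm(k),k)/\partial_\lambda D(\lambda_\pm(k),k)$ is compactly supported and inherits the regularity prescribed by \eqref{assmp-g0}, with $\partial_\lambda D$ bounded below by the simplicity of the zeros. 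Standard stationary phase then yields $\|\rho^{osc}_\pm(t)\|_{L^\infty}\lesssim\langle t\rangle^{-d/2}$; Plancherel gives the $L^2$ bound; and interpolation covers $p\in[2,\infty]$. The exponential factor from the Landau damping strip only improves the estimate there.

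The remainder $\rho^r$ is dominated by the high-frequency piece on $\{|k|\ge\kappa_0+\delta_0/2\}$, which may be represented as a spacetime convolution with a bounded symbol $1/D(i\tau,k)$ applied to $\rho_{\gamma^0}$. Under \eqref{assmp-g0}, the phase-mixing bounds of Section \ref{sec-freeHartree} yield rapid decay $|\Frho_{\gamma^0}(t,k)|\lesssim\langle kt\rangle^{-N}$ up to order $N$ controlled by $N_3$; the $L^\infty_x$ norm, controlled by $\int\langle kt\rangle^{-N}\,dk$, then decays at least like $\langle t\rangle^{-d}$. The additional two powers of $t$ and the logarithmic loss arise from the interface $|k|\approx\kappa_0+\delta_0/2$, where the cutoff transitions between the two regimes: a dyadic decomposition in $|k|-\kappa_0$, combined with the vanishing of the amplitude produced by the factor $\hat w(k)=|k|^{-2}$ hidden inside the symbol, yields the scaling $\langle t\rangle^{-2-d(1-1/p)}\log(1+t)$ after applying Young's inequality and interpolating against the trivial $L^1_x$ bound on $\rho_{\gamma^0}$.

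The main technical obstacle lies in justifying the contour shift past the imaginary axis. The symbol $D(\lambda,k)$ is manifestly analytic only on $\{\Re\lambda>0\}$: along $\Re\lambda=0$, the momentum integrand in \eqref{def-Dintro} develops a Plemelj-type jump across the resonance set $\{\lambda=2ik\cdot p-i|k|^2\}$, reflecting the continuous spectrum of the free particle dynamics. Extending $D(\lambda,k)$ and its reciprocal to a narrow strip $\{-\eta_0<\Re\lambda<0\}$ requires a controlled deformation of the momentum contour away from this singular surface, and this is precisely where the regularity and decay hypotheses on $\mu$ collected in Section \ref{sec-mu} are essential. Translating those hypotheses into sharp pointwise and $k$-derivative bounds on $1/D$, uniformly in $(\lambda,k)$ and to enough order to run both the stationary phase above and the dyadic scaling below, is the technical heart of the argument.
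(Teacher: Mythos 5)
Your overall architecture (isolate the two zeros $\lambda_\pm(k)$ at low frequency, use the Penrose--Lindhard bound \eqref{Penrose-intro} at high frequency, Klein--Gordon stationary phase for the oscillatory piece, phase mixing for the rest) matches the paper's, but two steps you rely on do not survive under the stated hypotheses. First, the contour cannot be pushed past the imaginary axis. The equilibria of Section \ref{sec-mu} are only $C^{n_0}$ and may be compactly supported, so $\varphi$ is merely $C^{N_0}$ and the Landau-type momentum-contour deformation you invoke to continue $D(\lambda,k)$ (and a fortiori $1/D$) into a strip $\{-\eta_0<\Re\lambda<0\}$ is unavailable; likewise $\Trho_{\gamma^0}(\lambda,k)$ cannot be continued past $\Re\lambda=0$, because $\Frho_{\gamma^0}(t,k)$ decays only polynomially in $t$ under the polynomial localization \eqref{assmp-g0}. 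This is not a technical refinement left to the reader: with your scheme the "shifted-contour remainder with factor $e^{-\eta_0 t}$" simply does not exist, and indeed the paper's remainder decays only polynomially, like $\langle kt\rangle^{-N_0+3}$. The paper instead stays on $\{\Re\lambda=0\}$, excises small semicircles $\mathcal{C}_\pm$ around the purely imaginary poles, compares $\widetilde G(\lambda,k)$ with the explicit rational model $\widetilde G_{k,0}$, and trades regularity of $\varphi$ for time decay by integrating by parts in $\lambda$ (Propositions \ref{prop:stable}, \ref{prop:GreenFourier1}).

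Second, the quantitative heart of the remainder estimate is missing. Your oscillatory amplitude $\chi(k)\Trho_{\gamma^0}(\lambda_\pm(k),k)/\partial_\lambda D(\lambda_\pm(k),k)$ is not uniformly smooth near $k=0$: each $k$-derivative of $\Trho_{\gamma^0}(\lambda_\pm(k),k)$ costs a factor $|k|^{-1}$ (since $|\partial_k^\alpha \Frho_{\gamma^0}(t,k)|\lesssim |k|^{-|\alpha|}\langle kt\rangle^{-N_2+|\alpha|}$ and the Laplace integral at $\Re\lambda_\pm\le 0$ converts the extra $\langle kt\rangle$-growth into $|k|^{-1}$), so the stationary-phase bound is not "standard'' as claimed. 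More importantly, the source of the rate $\langle t\rangle^{-2-d(1-1/p)}\log(1+t)$ is not an interface effect at $|k|\approx\kappa_0+\delta_0/2$, nor the factor $\hw(k)=|k|^{-2}$: in the paper it comes from (i) integrating the spacetime convolution $G^{osc}_\pm\star_{t,x}\rho^0$ by parts in time \emph{twice}, using $1/\lambda_\pm(i\partial_x)$, so that the slow source $\rho^0$ is replaced by $\partial_t^2\rho^0$, and (ii) the exact cancellation $\sum_\pm a_\pm(0)/\lambda_\pm(0)=1$ against the identity part $\delta(t)\star\rho^0=\rho^0$ of the Green function, which leaves Fourier multipliers $\mathcal{P}_2,\mathcal{P}_4$ vanishing like $|k|^2$ at low frequency (Proposition \ref{prop-densityrep}); the logarithm then arises from the convolution $\int_{t/2}^t\langle t-s\rangle^{-1}\langle s\rangle^{-2-d(1-1/p)}\,ds$ with $\Delta_x^{-1}G^r$. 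Without this integration-by-parts-in-time and cancellation mechanism, the untreated term $\chi(i\partial_x)\rho^0$, which decays only like $\langle t\rangle^{-d(1-1/p)}$, cannot be absorbed into a remainder of size $\langle t\rangle^{-2-d(1-1/p)}\log(1+t)$, so your decomposition as written does not yield the stated bounds.
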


Theorem \ref{theo-main} asserts that the leading dynamics of the Hartree density is indeed oscillatory and dispersive, which behaves like a Klein-Gordon wave of the form $e^{\pm i t\sqrt{1 - \Delta_x} }$, while the remainder decays faster and behaves like that of phase mixing for the free gas, whose derivatives gain extra decay. 
In fact, using Theorem \ref{theo-mainLandau}, we are able to describe precisely the oscillatory component $\rho^{osc}_\pm(t,x)$, namely 
\begin{equation}\label{true-Eosc}
\begin{aligned}
\rho^{osc}_\pm(t,x) &= G_\pm^{osc}(t)  \star_{t,x} \rho^0(t,x) 
\end{aligned}
\end{equation}
where $G_\pm^{osc}(t,x)$ is the oscillatory Green function whose Fourier transform is equal to $e^{\lambda_\pm(k)t} a_\pm(k)$, for some compactly supported and smooth Fourier symbols $a_\pm(k)$ and for dispersion functions $\lambda_\pm(k)$ constructed as in Theorem \ref{theo-mainLandau}. Here, the source $\rho^0(t,x)$ is the density generated by the free Hartree dynamics. The stated dispersive estimates resemble from those for the Klein-Gordon's type dispersion $e^{\lambda_\pm(k)t} a_\pm(k)$. However, difficulties arise, since the source term $\rho^0(t,x)$ in the spacetime convolution does not have sufficient decay, see the Strichartz estimates \eqref{free-Sch}, not to mention the lack of pointwise estimates. In Section \ref{sec-freeHartree}, we establish pointwise {\em phase mixing} estimates for the free Hartree dynamics under localized assumptions on the initial data \eqref{assmp-g0}, which are sufficient to derive the Klein-Gordon's dispersive behavior of the oscillatory component. We stress that the Green function and the density representation are obtained for general initial data that may only belong to Schatten spaces without the localization \eqref{assmp-g0}, see Propositions \ref{prop-Greenx} and \ref{prop-densityrep}, respectively.


\subsection{Notation}

We use the notation $\widehat{~\cdot~}$ and $\widetilde{~\cdot~}$ to denote the Fourier transform in $\RR^d$ and the Laplace-Fourier transform in $\RR_+ \times \RR^d$, namely   
\[
\widehat{f}(k) = \int_{\mathbb{R}^{d}} e^{-ix \cdot k} f(x) dx, 
\qquad	\widetilde{f}(\lambda,k) = \int_{0}^{\infty} e^{-\lambda t} \widehat{f}(t,k) dt ,
\]
for $\lambda \in \CC$ and $k \in \RR^d$. 

\section{Fourier-Laplace approach}\label{sec-FT}

In this section, we introduce the Fourier-Laplace approach to study the linearized problem \eqref{linHartree}. 

\subsection{Resolvent equation}

We first derive the spacetime Fourier-Laplace symbol for the linearized operator. See also \cite{Lewin2} for the derivation via the spacetime Fourier approach. Precisely, we obtain the following. 

\begin{lemma}\label{lem-resolvent} Let $\rho(t,x)$ be the density of the linearized Hartree equations \eqref{linHartree}, and $\Trho(\lambda,k)$ be the Fourier-Laplace transform of $\rho(t,x)$. Then, for each $k\in \mathbb{R}^d \setminus \{ 0 \}$ and $\lambda \in \mathbb{C}$, there hold
\begin{equation}\label{resolvent}
\begin{aligned}
\Trho(\lambda,k) &= \frac{1}{D(\lambda,k)} \Trho^0(\lambda,k),\qquad \Trho^0(\lambda,k) :=\int \frac{\hgamma_0(k-p,p)}{\lambda -2  ik \cdot  p + i|k|^2}\;dp
\end{aligned}
\end{equation}
where 
\begin{equation}\label{def-Dlambda}
\begin{aligned}
D(\lambda,k)
&:= 1 + i\hw(k)\int \frac{\mu(|p|^2) - \mu(|k-p|^2)}{\lambda -2ik \cdot p + i|k|^2} \;dp .
\end{aligned}
\end{equation}
\end{lemma}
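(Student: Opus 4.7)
The plan is to derive the claimed identity by reducing the linearized Hartree equation to a one-parameter family of forced linear ODEs in time, after a judicious change of variables in the Fourier transform of the integral kernel, and then applying the Laplace transform to close the self-consistent equation for $\tilde\rho$.

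\textbf{Step 1: Compute the Fourier symbols of the two commutators.} First I would take the spatial Fourier transform in both kernel variables, $\widehat{\gamma}(t,k,\ell)=\int e^{-ik\cdot x-i\ell\cdot y}\gamma(t,x,y)\,dx\,dy$. Using that $-\Delta$ acts as multiplication on the left by $|k|^2$ and on the right by $|\ell|^2$, the first commutator becomes
\begin{equation*}
\widehat{[-\Delta,\gamma]}(t,k,\ell)=(|k|^2-|\ell|^2)\,\widehat{\gamma}(t,k,\ell).
\end{equation*}
For the second commutator, the operator $w\star_x\rho_\gamma$ is multiplication by a function, while $\mu=\mu(-\Delta)$ is the Fourier multiplier by $\mu(|\cdot|^2)$. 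A direct computation of the kernel of $[V,\mu]$ with $V=w\star\rho_\gamma$ and $\widehat V(\xi)=\hat w(\xi)\widehat\rho_\gamma(\xi)$ gives
\begin{equation*}
\widehat{[w\star_x\rho_\gamma,\mu]}(t,k,\ell)=\hat w(k+\ell)\,\widehat\rho_\gamma(t,k+\ell)\bigl(\mu(|\ell|^2)-\mu(|k|^2)\bigr).
\end{equation*}

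\textbf{Step 2: Change variables and solve the ODE.} Setting $q=k+\ell$ and $p=\ell$ (so the two kernel arguments are $q-p$ and $p$), I use the identity $|q-p|^2-|p|^2=|q|^2-2q\cdot p$ to rewrite the equation for $m(t):=\widehat\gamma(t,q-p,p)$ as a scalar linear ODE in $t$, for each fixed $(q,p)$, with source term proportional to $\widehat\rho_\gamma(t,q)$. Duhamel's formula then expresses $m(t)$ as a free part (with propagator $e^{-i(|q|^2-2q\cdot p)t}$ applied to $\widehat\gamma_0(q-p,p)$) plus a time convolution of $\widehat\rho_\gamma(s,q)$ against the same propagator, multiplied by $-i\hat w(q)(\mu(|p|^2)-\mu(|q-p|^2))$.

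\textbf{Step 3: Laplace transform and close the equation.} Applying $\int_0^\infty e^{-\lambda t}\,dt$ turns the free propagator into $(\lambda+i|q|^2-2iq\cdot p)^{-1}$ and the Duhamel convolution into the product of this same denominator with $\tilde\rho_\gamma(\lambda,q)$. Integrating in $p$ and using that $\widehat\rho_\gamma(t,q)$ is (up to a Fourier normalization) the integral of $\widehat\gamma(t,q-p,p)$ in $p$, I obtain a self-consistent scalar identity
\begin{equation*}
\tilde\rho(\lambda,q)=\tilde\rho^{0}(\lambda,q)-i\hat w(q)\,\tilde\rho(\lambda,q)\int\frac{\mu(|p|^2)-\mu(|q-p|^2)}{\lambda+i|q|^2-2iq\cdot p}\,dp,
\end{equation*}
where $\tilde\rho^{0}(\lambda,q)$ is exactly the claimed integral of $\widehat\gamma_0(q-p,p)$. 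Solving algebraically for $\tilde\rho$ gives the desired resolvent formula with $D(\lambda,q)$ as in \eqref{def-Dlambda}, after renaming $q\to k$.

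\textbf{Main technical point.} The structural calculation is essentially bookkeeping; the real care is in justifying the exchanges of integration and the existence of the Laplace transform. For $\Re\lambda$ sufficiently large the $t$-integrals converge absolutely given reasonable assumptions on $\gamma_0$ (say, trace-class or Schatten data), and the $p$-integral inherits integrability from the smoothness and decay of $\mu$ assumed in Section \ref{sec-mu} together with the fact that $\mu(|p|^2)-\mu(|q-p|^2)$ vanishes as $p\to 0$ after a symmetrization in $p\mapsto q-p$ (relevant only near singular denominators); meromorphic/analytic extension to $\Re\lambda>0$ and further into the left half-plane will be handled later when $D(\lambda,k)$ is analyzed in detail. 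Thus the lemma reduces, once these integrability checks are carried out, to the algebraic manipulation described above.
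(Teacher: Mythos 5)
Your proposal is correct and follows essentially the same route as the paper: Fourier transform in both kernel variables, identification of the symbol of $[w\star_x\rho,\mu]$ via the multiplier $\mu(-\Delta)$, the density formula $\widehat\rho(t,k)=\int\widehat\gamma(t,k-p,p)\,dp$, and closure of the self-consistent equation for $\Trho$ after Laplace transform. The only cosmetic difference is that you solve the mode-by-mode ODE by Duhamel before Laplace transforming, while the paper Laplace transforms the equation directly and divides by $\lambda-2ik\cdot p+i|k|^2$; these are the same algebra.
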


Observe that $\Trho^0(\lambda,k)$ defined as in \eqref{resolvent} is exactly the Fourier-Laplace transform of the density $\rho^0(t,x)$ generated by the free Hartree dynamics $\gamma^0 (t)= e^{it\Delta}\gamma_0 e^{-it\Delta}$. Thus, the formulation \eqref{resolvent} asserts that the linearized Hartree density can be computed in terms of the free Hartree density, upon convoluting with the integral kernel of the symbol $\frac{1}{D(\lambda,k)}$. Note that by definition \eqref{def-Dlambda}, we may write for $\Re \lambda > 0$, 
\begin{equation}\label{eq:dlambdak}
	\begin{aligned}
		D(\lambda, k) 
		&= 1 +i \widehat{w}(k) \int_{\RR^{d}} \left( \frac{\mu(|p|^2)}{\lambda -2  ik \cdot  p + i|k|^2} - \frac{\mu(|p|^2)}{\lambda -2  ik \cdot  p - i|k|^2} \right) dp  \\
		&= 1 + i \widehat{w}(k) \int_{\RR^{d}} \int_{0}^{\infty}
		( e^{-(\lambda -2  ik \cdot  p + i|k|^2)t} - e^{-(\lambda -2  ik \cdot  p - i|k|^2)t}) \mu(|p|^2) \, dt dp \\
		&= 1 + i \widehat{w}(k)\int_{0}^{\infty} e^{-\lambda t} (e^{-i|k|^2 t} - e^{i|k|^2 t}) \int_{\RR^d} e^{2itk \cdot p} \mu(|p|^2) dp dt \\
		&= 1 + 2 \widehat{w}(k) \int_{0}^{\infty} e^{-\lambda t} \sin (t|k|^2)\widehat{\varphi}(2t|k|) \, dt,
	\end{aligned}
\end{equation}
which is the spacetime symbol derived in \cite{Lewin2}. Here, $\widehat{\varphi}(|k|)$ denotes the Fourier transform of the marginal function $\varphi(u) = \int_{\RR^{d-1}}\mu(u^2 + |w|^2)\; dw$.

\begin{proof}[Proof of Lemma \ref{lem-resolvent}]
	Let $\gamma(t,x,y)$ and $\mu(x,y)$ be the integral kernel of the density matrix operators $\gamma(t)$ and $\mu =\mu(-\Delta)$, respectively. We get
	\begin{align*}
		i \partial_{t} \gamma(t,x,y) 
		= (-\Delta_{x} + \Delta_{y})\gamma(x,y)
			+ (V(x) - V(y)) \mu(x,y)
	\end{align*}
	where $V = w \star_x \rho$. We take the Fourier transform with respect to $x$ and $y$, with dual variables $k$ and $p$, yielding 
	\begin{align*}
		i \partial_{t} \widehat{\gamma}(t,k,p) 
		= (|k|^2 - |p|^2)\widehat{\gamma}(t,k,p)
			+ \int \widehat{V}(l) \left( \widehat{\mu}(k-l, p) - \widehat{\mu}(k, p-l)  \right) dl.
	\end{align*}
	Observe that
	\begin{align*}
		\mu(x,y) = \int e^{i(x-y) \cdot k'} \mu(|k'|^2) dk', \,
	\end{align*}
which is well-defined, thanks to the decay property of $\mu(|k|^2)$. 
Hence, 
	\begin{align*}
		\widehat{\mu}(k, p) 
		&=\iiint e^{-ik \cdot x- i p \cdot y} e^{i(x-y) \cdot k'} \mu(|k'|^2) dk' dx dy \\
		&=\int \delta_{k=k'}\delta_{p=-k} \, \mu(|k'|^2) dk' =\mu(|k|^2) \delta_{p=-k}.
	\end{align*}
	Therefore, together with the fact that $\widehat{V}(t,k) = \widehat{w}(k)\widehat{\rho}(t,k)$, we get
	\begin{equation*}
		i \partial_{t} \widehat{\gamma}(t,k,p) 
		= (|k|^2 - |p|^2)\widehat{\gamma}(t,k,p)
		+ \widehat{w}(p +k) \widehat{\rho}(t, p + k) \left( \mu(|p|^2) - \mu(|k|^2) \right).
	\end{equation*}
	Taking the Laplace transform with respect to $t$ with the dual variable $\lambda$, we obtain
	\begin{equation}\label{eq:gammatilde}
		(\lambda + i(|k|^2 -|p|^2)) \widetilde{\gamma}(\lambda, k, p)
		= \widehat{\gamma_{0}}(k ,p) - i \widehat{w}(p +k) \widetilde{\rho}(\lambda,p+k) \left( \mu(|p|^2) - \mu(|k|^2) \right)
.	\end{equation}
	Note that the Fourier transform of the density is given by
	\begin{align*}
		\widehat{\rho}(t,k)
		&=  \iiint e^{ix\cdot (k'+p-k)}\widehat{\gamma}(t, k', p) \, dx dk' dp \\
		&= \iint \delta_{k=k'+p}\widehat{\gamma}(t, k', p) \, dk' dp 
		\\&=  \int \widehat{\gamma}(t, k- p , p) \, dp.
	\end{align*}
	From \eqref{eq:gammatilde}, we have
	\begin{align*}
		(\lambda + i(|k-p|^2 -|p|^2)) \widetilde{\gamma}(\lambda,k-p,p)
		= \widehat{\gamma_{0}}(k-p, p) - i \widehat{w}(k) \widetilde{\rho}(\lambda,k) \left( \mu(|p|^2) - \mu(|k-p|^2) \right).
	\end{align*}
	Dividing both sides by $\lambda +i(|k-p|^2 - |p|^2)$ and integrating over $p$, we get
	\begin{align*}
		\widetilde{\rho}(\lambda,k) 
		&=\int \frac{\widehat{\gamma_{0}}(k-p, p)}{\lambda -2ik \cdot p + i|k|^2} d p
		-  i\widehat{w}(k) \widetilde{\rho}(\lambda,k) \int \frac{\mu(|p|^2) - \mu(|k-p|^2)}{\lambda -2ik \cdot p + i|k|^2} d p,
		 	\end{align*}
leading to \eqref{resolvent}. 
\end{proof}

\begin{lemma}\label{lem-varphi}
Let $\mu(|p|^2)$ be the equilibrium as described in Section \ref{sec-mu}. Set 
\begin{equation}\label{def-marginals}
\varphi(u) =\int_{\mathbb{R}^{d-1}} \mu (u^2 + |w|^2) \, dw.
\end{equation}
Then, there hold 
\begin{itemize}
	\item $\varphi$ is of class $C^{N_{0}}$ where $N_{0} = n_{0} + \frac{d-1}{2} \ge d+7$.
	\item $\varphi$ is an even function on $\mathbb{R}$ and $\varphi'(u) <0$ for $0<u<\Upsilon$. In addition, $|\varphi'(u)| \lesssim |u|$. 
	\item When $\Upsilon = \infty$,
	\begin{equation}\label{decay-varphi}
	| \varphi(u)| \lesssim \langle u \rangle^{-N_{1}}, \qquad \forall u \in \mathbb{R}
	\end{equation}
	where $N_{1} = 2n_{1} - d +1 \ge {2N_{0} +2}$.
	\item When $\Upsilon < \infty$,
	\begin{equation}\label{decay-varphi2}
	\lim_{u \rightarrow \Upsilon} \frac{\varphi(u)}{(\Upsilon - u)^{N_{1}}}
	\end{equation}
	exists and is positive, where $N_{1} = n_{1} + \frac{d-1}{2} \ge {2}$.
\end{itemize}

\end{lemma}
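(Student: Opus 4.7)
The plan is to reduce $\varphi(u)$ to an Abel-type integral in the variable $v = u^2$ and then read off each property. Passing to polar coordinates in $w \in \RR^{d-1}$ and then substituting $s = r^2$, we write $\varphi(u) = \Phi(u^2)$ where
\[
\Phi(v) = \tfrac{|S^{d-2}|}{2}\int_0^\infty \mu(v+s)\,s^{(d-3)/2}\,ds = \tfrac{|S^{d-2}|}{2}\int_v^\infty \mu(t)(t-v)^{(d-3)/2}\,dt.
\]
Evenness of $\varphi$ is immediate and $\varphi'(u)=2u\,\Phi'(u^2)$, so every remaining assertion reduces to a statement about $\Phi$.

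For the regularity in odd $d \ge 3$, the exponent $(d-3)/2$ is a nonnegative integer and iterated differentiation of the Abel form under the integral peels off the polynomial kernel after $(d-1)/2$ steps, at which point $\Phi^{((d-1)/2)}(v) = \pm c_d\,\mu(v)$, which lies in $C^{n_0}$; hence $\Phi \in C^{N_0}$ with $N_0 = n_0 + (d-1)/2$. For even $d$, I substitute $s = \tau^2$ to get $\Phi(v) = |S^{d-2}|\int_0^\infty \mu(v+\tau^2)\tau^{d-2}\,d\tau$, which admits $n_0$ derivatives in $v$ by dominated convergence (integrability at infinity is supplied by the decay/support hypothesis on $\mu$), and the remaining half-derivative is recovered by one integration by parts in $\tau$.

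Strict monotonicity $\Phi'(v)<0$ on $(0,\Upsilon^2)$ is then direct. For $d\ge 3$, differentiating the Abel form yields $\Phi'(v) = -c_3\,\mu(v)$ when $d=3$ and $\Phi'(v) = -c_d'\int_v^\infty \mu(t)(t-v)^{(d-5)/2}\,dt$ when $d\ge 4$ (the boundary term at $t=v$ vanishes since $(t-v)^{(d-3)/2}|_{t=v} = 0$), and both are strictly negative by positivity of $\mu$ on $(0,\Upsilon^2)$. In $d=2$ the Abel kernel is singular and one cannot avoid introducing $\mu'$: differentiating the $\mu(v+s)$ form gives $\Phi'(v) = c_2\int_0^\infty \mu'(v+s)\,s^{-1/2}\,ds$, which is $\le 0$ exactly by the hypothesis $\mu'\le 0$ imposed in $d=2$, and strict negativity follows because $\mu(v)>0$ while $\mu(\Upsilon^2)=0$ force $\mu'$ to be strictly negative on some subinterval of $(v,\Upsilon^2)$. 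The same formulas combined with the decay/support hypotheses give $|\Phi'(v)| \lesssim 1$ uniformly, whence $|\varphi'(u)| = 2|u||\Phi'(u^2)| \lesssim |u|$.

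Finally, for the decay estimates I work from $\varphi(u) = |S^{d-2}|\int_0^\infty \mu(u^2+r^2)r^{d-2}\,dr$. When $\Upsilon=\infty$, the rescaling $r = |u|s$ produces $\varphi(u) = |S^{d-2}||u|^{d-1}\int_0^\infty \mu(u^2(1+s^2))s^{d-2}\,ds$, and applying $|\mu(e)|\lesssim \langle e\rangle^{-n_1}$ bounds this by $|u|^{d-1-2n_1}\int_0^\infty (1+s^2)^{-n_1}s^{d-2}\,ds \lesssim |u|^{-N_1}$ with $N_1 = 2n_1-d+1$ (the $s$-integral converges since $n_1 > (d-1)/2$). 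When $\Upsilon<\infty$, the integration is restricted to $r\in(0,\sqrt{\Upsilon^2-u^2})$; rescaling $r=\sqrt{\Upsilon^2-u^2}\,s$ and using $\Upsilon^2-(u^2+r^2) = (\Upsilon^2-u^2)(1-s^2)$ together with the asymptotic $\mu(|p|^2)\sim c_\mu(\Upsilon-|p|)^{n_1}$ as $|p|\to\Upsilon$, one obtains $\varphi(u)\sim c_\mu'(\Upsilon-u)^{n_1+(d-1)/2}$, so the stated limit exists and is positive with $N_1 = n_1+(d-1)/2$. The most delicate step is the strict monotonicity in $d=2$, where the Abel kernel is singular and one genuinely needs the extra hypothesis $\mu'\le 0$; the regularity statement in even dimensions is secondary but also requires the $s=\tau^2$ trick to eliminate the half-integer power.
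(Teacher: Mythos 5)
Your proof is correct and follows essentially the same route as the paper: your differentiation of the Abel-type form $\Phi(v)=c\int_v^\infty \mu(t)(t-v)^{(d-3)/2}\,dt$ (with the vanishing boundary term) is exactly the paper's radial integration by parts $\varphi'(u)=-c_d(d-3)u\int_0^\infty\mu(u^2+r^2)r^{d-4}\,dr$ rewritten in the variable $t=u^2+r^2$, including the separate treatments of $d=3$ and of $d=2$ via the hypothesis $\mu'\le 0$. The only difference is that you also spell out the regularity count, the bound $|\varphi'(u)|\lesssim|u|$, and the decay/vanishing-order statements via the rescalings $r=|u|s$ and $r=\sqrt{\Upsilon^2-u^2}\,s$, which the paper simply asserts follow from the assumptions on $\mu$.
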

\begin{proof}
By definition, we may set $r = |w|$ and write 
 $$
 \begin{aligned}
 \varphi(u) &=c_d\int_0^\infty\mu (u^2 + r^2) r^{d-2}\; dr
 \end{aligned}$$
for the constant $c_d$ being the measure of the unit ball in $\RR^{d-1}$. Therefore, for $d\ge 4$,  
we compute  $$
 \begin{aligned}
 \varphi'(u) &=c_d\int_0^\infty 2u \mu' (u^2 + r^2) r^{d-2}\; dr 
 = c_d\int_0^\infty u \frac{d}{dr}\mu(u^2 + r^2) r^{d-3}\; dr
 \\& 
 = - c_d(d-3)\int_0^\infty u \mu(u^2 + r^2) r^{d-4}\; dr, 
 \end{aligned}$$
 while for $d=3$, the above calculation yields $ \varphi'(u) = - c_3u \mu(u^2).$ This proves that $\varphi'(u) \le 0$ for all $u\ge 0$ in all dimensions $d\ge 3$. For $d=2$, $\varphi'\le 0$, thanks to the additional assumption that $\mu'\le 0$. Finally, the strict monotonicity of $\varphi(u)$ follows from the assumption that $\mu(u^2) >0$ on $\{0< u<\Upsilon\}$, while the remaining properties are obtained directly from those of $\mu(\cdot)$, see Section \ref{sec-mu}. 
\end{proof}

\begin{lemma}\label{lem-Dlambda} Let $D(\lambda,k)$ be defined as in \eqref{def-Dlambda}. Then, for each $k\not =0$ and $\lambda \in \CC$, we can write 
\begin{equation}\label{def-Dlambda1}
\begin{aligned}
D(\lambda,k) 
= 1 + \frac{\hw(k)}{2|k|} \left[ \cH \left(\frac{-i \lambda +|k|^2}{2|k|} \right) - \cH \left( \frac{-i\lambda -|k|^2}{2|k|} \right)   \right] ,
\end{aligned}
\end{equation}
where
\begin{equation}\label{def-cH}
 \cH(z) 
= \int_{\RR} \frac{\varphi(u)}{z-u} \, du,
\end{equation}
for $\varphi(u)$ defined as in \eqref{def-marginals}. In particular, $D(\lambda,k)$ is analytic in $\Re\lambda>0$, and 
\begin{equation}\label{unibd-cH} \left| \partial_z^n\cH \left( \frac{-i \lambda \pm |k|^2}{2|k|} \right) \right| \lesssim 1,
\end{equation}
uniformly for $k\in \mathbb{R}^d$, $\Re \lambda \ge 0$, and $0\le n< N_0 -1$.  
\end{lemma}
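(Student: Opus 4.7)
The plan is in three stages: reduce the $d$-dimensional momentum integral defining $D(\lambda,k)$ to a one-dimensional integral against $\varphi$; identify the result with $\cH(z_\pm)$ for appropriate $z_\pm\in\CC$; and finally bound $\cH$ and its derivatives up to the boundary of the right half-plane.

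For the reduction, I start from the equivalent form \eqref{eq:dlambdak} (or equivalently from \eqref{def-Dlambda} after applying the substitution $p\mapsto k-p$ in the $\mu(|k-p|^{2})$ contribution and exploiting the evenness of $\mu(|\cdot|^{2})$), writing
\[
D(\lambda,k) = 1 + i\hw(k) \int_{\RR^{d}} \mu(|p|^{2}) \left[ \frac{1}{\lambda - 2ik\cdot p + i|k|^{2}} - \frac{1}{\lambda - 2ik\cdot p - i|k|^{2}}\right] dp.
\]
Decomposing $p = u\hat{k} + w$ with $\hat{k} = k/|k|$ and $w \in \hat{k}^{\perp} \simeq \RR^{d-1}$, I have $k\cdot p = |k|u$ and $|p|^{2} = u^{2} + |w|^{2}$. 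By Fubini, integrating out $w$ turns $\mu(u^{2}+|w|^{2})$ into the marginal $\varphi(u)$ and leaves one-dimensional integrals $\int_{\RR}\varphi(u)/(\lambda - 2i|k|u \pm i|k|^{2})\,du$. Each denominator factors as $-2i|k|(u - z_{\pm})$ with $z_{\pm} = (-i\lambda \pm |k|^{2})/(2|k|)$, so each integral equals $\tfrac{1}{2i|k|}\cH(z_{\pm})$; multiplying through by the outer $i\hw(k)$ cancels the $i$ in the denominator and produces exactly \eqref{def-Dlambda1}. Analyticity on $\Re\lambda>0$ is then immediate because there $\Im z_{\pm} = -\Re\lambda/(2|k|) < 0$, keeping $z_{\pm}$ off the real axis where $\varphi$ is supported.

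The main obstacle is the uniform bound \eqref{unibd-cH} right up to the boundary $\Re\lambda = 0$, where $z_{\pm}$ becomes real and the naive integrand $\varphi(u)/(z-u)^{n+1}$ is not even integrable. Using the decay \eqref{decay-varphi} (or the compact support of $\varphi$, when $\Upsilon<\infty$) to kill boundary terms, I integrate by parts $n$ times to transfer all derivatives onto $\varphi$:
\[
\partial_{z}^{n}\cH(z) = \int_{\RR} \frac{\varphi^{(n)}(u)}{z-u}\,du, \qquad 0 \le n < N_{0}.
\]
To bound this uniformly for $\Im z \le 0$, I fix a smooth cutoff $\chi$ equal to $1$ near $0$ and of compact support, and split
\[
\partial_{z}^{n}\cH(z) = \int_{\RR} \frac{\varphi^{(n)}(u) - \varphi^{(n)}(\Re z)\,\chi(u-\Re z)}{z-u}\,du + \varphi^{(n)}(\Re z) \int_{\RR}\frac{\chi(u-\Re z)}{z-u}\,du.
\]
The first integral is controlled by $\|\varphi^{(n+1)}\|_{L^{\infty}} + \|\varphi^{(n)}\|_{L^{1}}$, thanks to the pointwise bound $|u-\Re z|/|z-u|\le 1$ on the support of $\chi(\cdot-\Re z)$ (which is where the constraint $n < N_{0}-1$ buys one extra derivative) combined with the decay of $\varphi^{(n)}$ off that support. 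The second integral, after the translation $v = u - \Re z$, is an explicit logarithm of $-v + i\Im z$, whose boundary value as $\Im z\to 0^{-}$ is bounded by Sokhotski-Plemelj. Since both estimates depend only on $\Im z \le 0$ and on translation-invariant norms of $\varphi$, neither of which sees the particular values of $k$ or $\Re\lambda$, this yields uniformity in \eqref{unibd-cH}.
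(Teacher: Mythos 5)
Your reduction to \eqref{def-Dlambda1} is correct and coincides with the paper's: the paper likewise symmetrizes the two $\mu$-terms (as in \eqref{eq:dlambdak}), introduces the orthogonal splitting $u=k\cdot p/|k|$, $w=p-u\,k/|k|$, integrates out $w$ to produce $\varphi$, and identifies the resulting one-dimensional integrals with $\tfrac{1}{2i|k|}\cH(z_\pm)$, $z_\pm=\tfrac{-i\lambda\pm|k|^2}{2|k|}$; your factorization of the denominators and the sign bookkeeping check out, as does the analyticity remark ($\Im z_\pm=-\Re\lambda/(2|k|)<0$). Where you genuinely diverge is the uniform bound \eqref{unibd-cH}. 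The paper works on the Fourier side: it rewrites $\cH(z)=i\int_0^\infty e^{-izt}\widehat\varphi(t)\,dt$ (see \eqref{redef-cH}), uses the regularity of $\varphi$ to get $|\partial_t^n\widehat\varphi(t)|\lesssim\langle t\rangle^{-N_0}$ as in \eqref{bounds-Nt}, and then bounds $\partial_z^n\cH(z)=i\int_0^\infty e^{-izt}(-it)^n\widehat\varphi(t)\,dt$ by $\int_0^\infty\langle t\rangle^{-N_0+n}dt$, which is exactly where the constraint $n<N_0-1$ comes from. You instead stay on the physical side: integrate by parts $n$ times to write $\partial_z^n\cH(z)=\int\varphi^{(n)}(u)/(z-u)\,du$ and then run the standard Plemelj-type cutoff splitting, paying one extra derivative ($\varphi^{(n+1)}\in L^\infty$, again compatible with $n<N_0-1$) for the local piece and using decay of $\varphi^{(n)}$ for the tail. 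Both routes are valid and give the same uniformity in $k$ and $\Re\lambda\ge0$; the paper's version is slightly slicker and makes the threshold $N_0-1$ transparent, while yours is more elementary and self-contained. Two small points to tighten: (i) your kill of boundary terms and your tail estimate use decay of the \emph{derivatives} of $\varphi$, not just of $\varphi$ as stated in \eqref{decay-varphi} — this is implicitly assumed throughout the paper as well (it is also needed for \eqref{bounds-Nt}), but you should say so; (ii) with a smooth cutoff $\chi$ the local integral is not literally a logarithm, though it is uniformly bounded in $\Im z\le 0$ by the arctangent bound for its imaginary part and odd-part cancellation (or $|\chi(v)-\chi(-v)|\lesssim|v|$) for its real part, which is what your Sokhotski--Plemelj remark amounts to.
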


\begin{proof} 
In view of \eqref{def-Dlambda}, for $k\not =0$, we introduce the change of variables  
\begin{equation}\label{change-vu}
u = \frac{k\cdot p}{|k|} , \qquad w = p -   \frac{(k\cdot p)k}{|k|^2},
\end{equation}
for $p\in \RR^d$, with the Jacobian equal to one. Note that $|p|^2 = u^2 + |w|^2$. We then write 
\[
\begin{aligned}
D(\lambda,k)
&= 1 + i\hw(k)\int_{\RR^d} \left( \frac{1}{\lambda -2  ik \cdot  p + i|k|^2} - \frac{1}{\lambda -2  ik \cdot  p - i|k|^2} \right) \mu(|p|^2)\; dp 
\\&= 1 + \frac{\hw(k)}{2|k|}\int_{\RR^d} \left( \frac{1}{\frac{-i\lambda + |k|^2}{2|k|} - \frac{k \cdot  p}{|k|}} - \frac{1}{\frac{-i\lambda - |k|^2}{2|k|} - \frac{k \cdot  p}{|k|} } \right) \mu(|p|^2)\; dp 
\\&= 1 + \frac{\hw(k)}{2|k|}\int_{\RR} \left( \frac{1}{\frac{-i\lambda + |k|^2}{2|k|} - u} - \frac{1}{\frac{-i\lambda - |k|^2}{2|k|} -u } \right) \left(\int_{w \in k^{\perp}} \mu (u^2 +|w|^2) \, dw \right) du .
\end{aligned}
\]
For each $u \in \RR$, setting 
$$ \cH(z) 
= \int_{\RR} \frac{\varphi(u)}{z-u} \, du, \qquad	\varphi(u) =\int_{w \in k^{\perp}}\mu (u^2 + |w|^2) \, dw,
$$
we thus obtain  \eqref{def-Dlambda1}. Observe that since $\mu(\cdot)$ is radial, the integral over the hyperplane $k^\perp$ is equal to that on $\RR^{d-1}$, and $\varphi(u)$ is thus independent of $k\not =0$, as defined in \eqref{def-marginals}.   
It remains to study the function $\cH(z)$, which is the Hilbert transform of $\varphi$. By definition, $\cH(z)$ is well-defined and analytic in $\{ \Im z < 0\}$, since $\varphi(u)$ decays rapidly to zero (in particular, $\varphi \in L^1$). Moreover, we may write 
\begin{equation}\label{redef-cH}
\cH(z) 
= \int_{\mathbb{R}} \frac{\varphi(u)}{z-u} \, du
= i\int_{0}^{\infty} e^{-iz t} \int_{\mathbb{R}} e^{iut} \varphi(u) \, dudt
= i \int_{0}^{\infty}  e^{-izt} \widehat{\varphi}(t) \, dt,
\end{equation}
where $\widehat{\varphi}$ denotes the Fourier transform of $\varphi$ (recalling that $\varphi(u)$ is even in $u$). Integrating by parts in $u$ and using the regularity of $\varphi$, we obtain 
\begin{equation}\label{bounds-Nt}
|\partial_t^n \widehat{\varphi}(t)|\le C_n\langle t \rangle^{-N_0} ,
\end{equation}
for any $0 \le n< N_1$. This gives 
\begin{equation}\label{bd-cH1}
|\cH(z)| 
\le  C_0 \int_0^\infty e^{\Im z t}  \langle t \rangle^{-N_0} dt \lesssim 1,
\end{equation}
for any $\Im z \le 0$. Here
	\[
		\mathcal{H}(\tilde{\tau}) := \lim_{\tilde{\gamma} \rightarrow 0^{+}} \mathcal{H}(-i \tilde{\gamma} + \tilde{\tau})
	\]
for $|\tilde{\tau}| < \Upsilon$. Similarly
\[
	\partial_{z}^{n}  \mathcal{H}(z) = i \int_{0}^{\infty} e^{-izt} (-it)^{n} \widehat{\varphi}(t) \, dt 
\]
so that
\[
	|\partial_{z}^{n} \mathcal{H}(z)| \le C_{0} \int_{0}^{\infty} e^{\Im z  t} \langle t \rangle^{-N_{0} +n } \, dt \lesssim 1
\]
for $\Im z \le 0 $ and $0 \le n < N_{0}-1$. The lemma follows. 
\end{proof}

\subsection{Spectral stability}\label{sec-spectral}

In this section, we prove that there are no solutions in the right half plane $\{\Re \lambda>0\}$. Namely, we obtain the following.

\begin{proposition}\label{prop-nogrowth} Let $\mu(|p|^2)$ be the equilibrium as described in Section \ref{sec-mu}. Then, for any $\delta_{0} >0$ and $k \in \mathbb{R}^{d} \setminus \{0 \}$, there is a positive constant $c_{0}$ such that
	\begin{equation}\label{specs-D}
		\inf_{\Re \lambda \ge \delta_{0}} | D(\lambda, k )| \ge c_{0}.
	\end{equation}
In particular, $c_0$ may depend on $k$ for $|k|\le 1$, but is independent of $k$ for $|k|\ge 1$.
\end{proposition}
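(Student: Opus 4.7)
The plan is to prove that $D(\lambda,k)\neq 0$ on the entire half-plane $\{\Re\lambda>0\}$ by a Penrose-type sign argument, and then upgrade this to the quantitative bound \eqref{specs-D} by combining the decay $D(\lambda,k)\to 1$ as $|\lambda|\to\infty$ with a compactness argument. The key structural input is monotonicity of the marginal $\varphi$ on $(0,\Upsilon)$ from Lemma \ref{lem-varphi}, rather than monotonicity of $\mu$ itself (which is not assumed in $d\ge 3$). Writing $\lambda=\gamma+i\tau$ with $\gamma>0$, I would start from \eqref{def-Dlambda1} and use the identity $\cH(z_+)-\cH(z_-)=-|k|\int_{\RR}\varphi(u)[(z_+-u)(z_--u)]^{-1}du$ together with the algebraic identity $(z_+-u)(z_--u)=(\beta(u)^2-|k|^4)/(4|k|^2)$ for $\beta(u)=\tau-2|k|u-i\gamma$ to rewrite, in the Coulomb case,
\[
D(\lambda,k)=1-2\int_{\RR}\frac{\varphi(u)}{\beta(u)^2-|k|^4}\,du.
\]

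The first sub-step rules out $\Im\lambda\neq 0$. Taking the imaginary part of the above and substituting $s=u-\tau/(2|k|)$ makes the denominator even in $s$; after separating the odd factor $s$ and symmetrizing in $s\mapsto -s$, the imaginary part becomes, up to a strictly positive prefactor $8\gamma|k|$,
\[
\Im D(\lambda,k)=8\gamma|k|\int_0^\infty[\varphi(a+s)-\varphi(a-s)]\frac{s}{N(s)}\,ds,\qquad a=\tau/(2|k|),
\]
with $N(s)>0$ even in $s$. Since $\varphi$ is even and strictly decreasing on $(0,\Upsilon)$ (and vanishes beyond $\Upsilon$ when $\Upsilon<\infty$), the bracket has constant sign opposite to $\operatorname{sgn}(a)$ with strict nonvanishing on a set of positive measure, so $\Im D=0$ with $\gamma>0$ forces $\tau=0$. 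For the remaining case $\tau=0$, I would use that $\varphi$ even makes $\cH$ odd, while $\cH(\bar z)=\overline{\cH(z)}$ gives $\cH(z_-)=-\overline{\cH(z_+)}$ at $\tau=0$. Inserting into \eqref{def-Dlambda1} yields
\[
D(\gamma,k)=1+\frac{\hw(k)}{|k|}\,\Re\cH(z_+),\qquad z_+=\tfrac{|k|}{2}-\tfrac{i\gamma}{2|k|},
\]
and an analogous change of variable $v=u-|k|/2$ followed by odd-in-$v$ symmetrization converts $\Re\cH(z_+)$ into $\int_0^\infty[\varphi(|k|/2-v)-\varphi(|k|/2+v)]\,v/(v^2+b^2)\,dv$, strictly positive by the same monotonicity (with $b=\gamma/(2|k|)$). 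Since $\hw(k)>0$ for Coulomb, $D(\gamma,k)>1$, completing the proof that $D$ does not vanish in $\{\Re\lambda>0\}$.

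For the quantitative bound \eqref{specs-D}, the direct estimate of \eqref{eq:dlambdak} using $|\widehat{\varphi}|\le\|\varphi\|_{L^1}$ gives $|D(\lambda,k)-1|\le 2\hw(k)\|\varphi\|_{L^1}/\delta_0$, so for Coulomb $|D(\lambda,k)|\ge 1/2$ whenever $|k|^2\ge 4\|\varphi\|_{L^1}/\delta_0$; this yields the uniform large-$|k|$ piece of \eqref{specs-D}. For $k$ restricted to any bounded subset of $\RR^d\setminus\{0\}$ (the compact annulus $1\le|k|\le K_0$ for the uniform part, or a neighborhood of any fixed $k$ with $0<|k|<1$ for the pointwise part), one integration by parts in $t$ in \eqref{eq:dlambdak} shows $|D(\lambda,k)-1|\to 0$ as $|\lambda|\to\infty$ uniformly on the set; combined with joint continuity and the already-established nonvanishing of $D$ on the compact region $\{\Re\lambda\ge\delta_0,\ |\lambda|\le M\}$ times the $k$-set, this supplies the desired positive lower bound. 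The principal obstacle throughout is the lack of monotonicity of $\mu$; the resolution is to pass to the radial marginal $\varphi$ (monotone by Lemma \ref{lem-varphi}) and to arrange every relevant integral in the form $\int_0^\infty[\varphi(p-s)-\varphi(p+s)]\cdot(\text{positive odd-symmetric kernel})\,ds$, after which monotonicity of $\varphi$ alone delivers the required definite sign.
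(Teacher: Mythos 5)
Your proposal is correct and follows essentially the same route as the paper's proof: zeros in $\{\Re\lambda>0\}$ are excluded by symmetrizing in $u$ and using that the marginal $\varphi$ is even and strictly decreasing on $(0,\Upsilon)$ (the imaginary part forces $\tau=0$, and a positivity argument handles $\tau=0$), while the quantitative bound comes from the large-$|k|$ and large-$|\lambda|$ estimates of $D-1$ via \eqref{eq:dlambdak} plus integration by parts and a compactness argument. Your repackagings (writing $D=1-2\int\varphi(u)/(\beta(u)^2-|k|^4)\,du$ and using $\cH(z_-)=-\overline{\cH(z_+)}$ to get $D(\gamma,k)>1$ at $\tau=0$) are only cosmetic variants of the paper's computation.
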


\begin{proof} 
Recall from \eqref{eq:dlambdak} that 
$$
		D(\lambda, k) 
= 1 + 2 \widehat{w}(k) \int_{0}^{\infty} e^{-\lambda t} \sin (t|k|^2)\widehat{\varphi}(2t|k|) \, dt.
$$
Therefore, recalling \eqref{bounds-Nt}, for $\Re \lambda \ge 0$, we have 
$$
|D(\lambda, k) - 1| \lesssim \widehat{w}(k) \int_{0}^{\infty} \langle kt \rangle^{-N_0} \, dt  \lesssim |k|^{-1}\widehat{w}(k),
$$
which in particular tends to zero as $|k|\to \infty$, uniformly in $\Re\lambda\ge 0$. This proves that there are positive constant $K_0,c_0$ so that 
	\begin{equation}\label{specs-Dlargek}
		\inf_{|k|\ge K_0}\inf_{\Re \lambda \ge 0} | D(\lambda, k )| \ge c_{0}.
	\end{equation}
In particular, this gives \eqref{specs-D} for large $|k|$. On the other hand, for $\lambda \not =0$, write $e^{-\lambda t} = -\lambda^{-1}\partial_te^{-\lambda t}$ and integrate the integral by parts in $t$, yielding 
$$
D(\lambda,k)-1 =  \frac{2 \widehat{w}(k)}{\lambda} \int_{0}^{\infty} e^{-\lambda t} \Big( |k|^2\cos(t|k|^2)\widehat{\varphi}(2t|k|) + 2|k|\sin (t|k|^2)\partial_t \widehat{\varphi}(2t|k|) \Big)\, dt,
$$
noting there are no boundary terms, since $\widehat{\varphi}(t)$ decays rapidly in $t$. Using again \eqref{bounds-Nt}, we bound 
$$
|D(\lambda,k)-1| \lesssim  \frac{\widehat{w}(k)}{|\lambda|} \int_{0}^{\infty} (1+|k|)|k| \langle kt\rangle^{-N_0} \, dt \lesssim \frac{(1+|k|)\widehat{w}(k)}{|\lambda|}.
$$
Therefore, for each $k \not =0$, $D(\lambda, k) \rightarrow 1$, when $|\lambda| \rightarrow \infty$ with $\Re \lambda \ge 0$, proving \eqref{specs-D} for large $\lambda$. Thus, we focus on bounded $k$ and bounded $\lambda$. By compactness, for any $k\in \mathbb{R}^d \setminus\{ 0 \}$, it suffices to prove that $D(\lambda,k) \neq 0$ for $\Re \lambda >0$. Equivalently, from \eqref{def-Dlambda1} with $z = - \frac{i\lambda + |k|^2}{2|k|}$, we show that the identity 
\begin{equation}\label{eq:posrealpt}
	\cH (z) - \cH ( z + |k|)  =  \frac{2|k|}{\hat{w}(k)}
\end{equation}
for some complex $z$ with $\Im z < 0$ would lead to a contradiction.
Indeed, by definition \eqref{def-cH}, we first note that
\[
	\begin{aligned}
	\cH(z) 
	&=  \frac{1}{2\pi} \int_{\RR} \frac{\varphi(u)}{z-u} \, du 
	 =  \frac{1}{2\pi}\int_{\mathbb{R}} \frac{(\Re z-u) \varphi(u)}{|z-u|^2} \, du - \frac{i}{2\pi} \Im z \int_{\RR} \frac{\varphi(u)}{|z-u|^2} \, du .
	\end{aligned}
\]
The imaginary part of \eqref{eq:posrealpt} reads $\Im \cH(z)  - \Im \cH(z+|k|) =0$, which implies 
\[
	\int_{\mathbb{R}} \frac{\varphi(u)}{|z-u|^2}  \, du -  \int_{\mathbb{R}}  \frac{\varphi(u)}{|z+|k|-u|^2}  \, du =0,
\]
since $\Im (z+|k|) = \Im z <0$. Writing $\lambda = \gamma + i \tau$ and $z = -\frac{i\lambda}{2|k|} - \frac{|k|}{2}$, we have 
\begin{align*}
	\int_{\mathbb{R}} \frac{\varphi(u)}{ \frac{\gamma^2}{4|k|^2} + | \frac{\tau}{2|k|} - \frac{|k|}{2} -u|^2 } \, du
	- \int_{\mathbb{R}} \frac{\varphi(u)}{ \frac{\gamma^2}{4|k|^2} + | \frac{\tau}{2|k|} + \frac{|k|}{2} -u |^2 } \, du=0.
\end{align*}
Using the change of variable $u'= -u$ in the first integral, we get
\begin{align*}
	\int_{\mathbb{R}} \frac{\varphi(u)}{ \frac{\gamma^2}{4|k|^2} + | -\frac{\tau}{2|k|} + \frac{|k|}{2} -u|^2 } \, du
	- \int_{\mathbb{R}} \frac{\varphi(u)}{ \frac{\gamma^2}{4|k|^2} + | \frac{\tau}{2|k|} + \frac{|k|}{2} -u |^2 } \, du=0.
\end{align*}
Introducing the change of variable $u' = \frac{|k|}{2} - u$ and then combining two integrals, we get
\begin{align*}
	 \tau \int_{\mathbb{R}} \frac{ u \varphi(\frac{|k|}{2} +u )}{g(\gamma, \tau, k, u)} \, du = 0
\end{align*}
where
	\begin{align*}
		g(\gamma, \tau, k, u) = \left(\frac{\gamma^2}{4|k|^2} + \left| -\frac{\tau}{2|k|} +u \right|^2\right)\left(\frac{\gamma^2}{4|k|^2} + \left| \frac{\tau}{2|k|} + u \right|^2 \right)
	\end{align*}
is the denominator which is even in $u$, and strictly positive for $k \in \mathbb{R}^{d} \setminus \{0 \}$, and $\gamma >0$. We conclude that $\tau =0$ because $\varphi$ is even and $\varphi'(u) <0$ for $0< u< \Upsilon$ so that
\begin{align*}
	 \int_{\mathbb{R}} \frac{ u \varphi(\frac{|k|}{2} +u)}{g(\gamma, \tau, k, u)} \, du
	 = \int_{0}^{\infty} \frac{u ( \varphi( \frac{|k|}{2} - u) - \varphi( \frac{|k|}{2} + u))}{g(\gamma, \tau, k ,u )} \, du >0
\end{align*}
On the other hand, the real part of \eqref{eq:posrealpt} is reduced to 
\[
	-\int_{\mathbb{R}} \frac{u \varphi(u)}{|z-u|^2} \, du - \int_{\mathbb{R}} \frac{(|k|-u) \varphi(u)}{|z+|k|-u|^2} \, du 
	=  \frac{2|k|}{\hat{w}(k)}.
\]
since $\Re (z+|k|) = \Re z + |k|$. When $\tau =0$, we obtain that 
\[
	-\int_{\mathbb{R}} \frac{u \varphi(u)}{ \frac{\gamma^2}{4|k|^2} + |   -\frac{|k|}{2} -u|^2} \, du - \int_{\mathbb{R}} \frac{(|k|-u)\varphi(u)}{ \frac{\gamma^2}{4|k|^2} + |   \frac{|k|}{2} -u|^2} \, du 
	=  \frac{2|k|}{\hat{w}(k)}.
\]
Using the change of variable $u'=-u$ in the second integral, it is simplied to
\[
	- \int_{\mathbb{R}} \frac{(|k| +2u) \varphi(u)}{\frac{\gamma^2}{4|k|^2} + |   \frac{|k|}{2} +u|^2}  \, du =  \frac{2|k|}{\hat{w}(k)}
\]
Since $\varphi$ is even, the left hand side becomes
\begin{align*}
	- \int_{\mathbb{R}} \frac{(|k| +2u) \varphi(u)}{\frac{\gamma^2}{4|k|^2} + |   \frac{|k|}{2} +u|^2} \, du
	&= - \int_{\mathbb{R}} \frac{2u \varphi(u - \frac{|k|}{2})}{\frac{\gamma^2}{4|k|^2} + u^2} \, du 
	= \int_{0}^{\infty} \frac{2u ( \varphi( u + \frac{|k|}{2}) - \varphi( u - \frac{|k|}{2} ))}{\frac{\gamma^2}{4|k|^2} + u^2} \, du,
\end{align*}
which is strictly negative, since $\varphi'(u) < 0$ for $0 < u< \Upsilon$. This yields a contradiction, which ends the proof of Proposition \ref{prop-nogrowth}.
\end{proof}

\subsection{No embedded eigenvalues}\label{sec-emlambda}

In this section, we show that pure imaginary solutions $\lambda = i \tau$ of $D(\lambda, k)=0$ do not exist for $|\tau| < 2k \Upsilon + |k|^2$. In particular, when $\Upsilon = \infty$, there are no pure imaginary solutions for $D(\lambda, k ) =0$.
Precisely, we obtain the following. 

\begin{proposition}\label{prop:nonexistence} Let $\mu(|p|^2)$ be the equilibrium as described in Section \ref{sec-mu}, and $\Upsilon$ be as in \eqref{def-Upsilon}. For $k \in \mathbb{R}^d \setminus \{ 0 \}$, the dispersion relation $D(\lambda,k)=0$ does not admit solutions of the form $\lambda = i\tau$ with $|\tau| < 2|k|\Upsilon + |k|^2$. Moreover, for any compact subset $U$ of $\{ |\tilde \tau| < 2\Upsilon \}$, there exists a positive constant $c_{U}$, independent of $k$,
so that
	\begin{equation}\label{eq:lowbd}
		|D(i \tilde\tau|k|, k)| \ge c_{U} \left( 1 + |\hw(k)|\right)
	\end{equation}
	uniformly in $\tilde \tau \in U$. 
\end{proposition}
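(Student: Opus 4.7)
My plan is to mimic the strategy of Proposition~\ref{prop-nogrowth}, now on the imaginary axis $\Re\lambda=0$, by separately tracking the real and imaginary parts of $D(i\tilde\tau|k|,k)$. Setting $a=(\tilde\tau+|k|)/2$ and $b=(\tilde\tau-|k|)/2$ (so $a-b=|k|$ and $a+b=\tilde\tau$), the representation \eqref{def-Dlambda1} combined with the Plemelj formula for the boundary value of $\mathcal{H}$ from $\Im z<0$, namely $\mathcal{H}(s)=\mathcal{H}^{\mathrm{pv}}(s)+i\pi\varphi(s)$ with $\mathcal{H}^{\mathrm{pv}}(s)=\mathrm{p.v.}\int \varphi(u)/(s-u)\,du$, decomposes $D(i\tilde\tau|k|,k)$ into real and imaginary parts in terms of $\varphi$ and $\mathcal{H}^{\mathrm{pv}}$ evaluated at $a$ and $b$.

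For the qualitative statement, the vanishing of the imaginary part reduces to $\varphi(a)=\varphi(b)$. Using that $\varphi$ is even, strictly decreasing on $(0,\Upsilon)$, and identically zero on $|u|\ge\Upsilon$ when $\Upsilon<\infty$, and that the constraint $|\tau|<2|k|\Upsilon+|k|^2$ translates to $|\tilde\tau|<2\Upsilon+|k|$ (which rules out $a\le-\Upsilon$ or $b\ge\Upsilon$), I would argue that $\varphi(a)=\varphi(b)$ with $a>b$ forces exactly one of two scenarios: either $a=-b$ (so $\tilde\tau=0$), or $a\ge\Upsilon$ and $b\le-\Upsilon$ (requiring $|k|\ge2\Upsilon$). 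In the first case, evenness of $\varphi$ gives $\mathcal{H}^{\mathrm{pv}}(-s)=-\mathcal{H}^{\mathrm{pv}}(s)$, so $\Re D(0,k)=1+\frac{\hw(k)}{|k|}\mathcal{H}^{\mathrm{pv}}(|k|/2)$; rewriting
\[
\mathcal{H}^{\mathrm{pv}}(|k|/2)=\int_0^\infty\frac{\varphi(||k|/2-v|)-\varphi(|k|/2+v)}{v}\,dv,
\]
strict monotonicity of $\varphi$ on $[0,\Upsilon)$ yields a strictly positive integrand on the support, hence $\Re D(0,k)>1$. In the second case, both $\mathcal{H}(a),\mathcal{H}(b)$ are real and $\mathcal{H}(a)-\mathcal{H}(b)=-|k|\int\varphi(u)/[(a-u)(b-u)]\,du$, whose integrand has $(a-u)(b-u)<0$ on the support, again forcing $D(i\tilde\tau|k|,k)>1$. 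This rules out pure imaginary zeros.

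For the quantitative bound on a compact $U\subset\{|\tilde\tau|<2\Upsilon\}$, I would split into three regimes of $|k|$. For $|k|$ large, the bound $|D-1|\lesssim\hw(k)/|k|$ from the proof of Proposition~\ref{prop-nogrowth} gives $|D|\ge 1/2$ while $1+\hw(k)$ stays bounded. For $|k|$ in a compact subset away from $0$, continuity of $D$ in $(\tilde\tau,k)$ and the already-established nonvanishing, combined with the compactness of $U$, yield a uniform lower bound $|D|\ge c>0$, while $1+\hw(k)$ is bounded. For $|k|\to 0$, I would expand
\[
\frac{\mathcal{H}(a)-\mathcal{H}(b)}{|k|}\longrightarrow \mathcal{H}'(\tilde\tau/2-i0)=(\mathcal{H}^{\mathrm{pv}})'(\tilde\tau/2)+i\pi\varphi'(\tilde\tau/2),
\]
so that $D(i\tilde\tau|k|,k)=1+\tfrac12\hw(k)\,\mathcal{H}'(\tilde\tau/2-i0)+o(\hw(k))$. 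On $U$, for $\tilde\tau\ne 0$ the imaginary part $\pi\varphi'(\tilde\tau/2)$ is nonzero by strict monotonicity of $\varphi$, while at $\tilde\tau=0$ integration by parts gives $(\mathcal{H}^{\mathrm{pv}})'(0)=2\int_0^\infty\varphi'(u)/u\,du<0$. Continuity in $\tilde\tau$ then provides a uniform lower bound $|\mathcal{H}'(\tilde\tau/2-i0)|\ge c_U>0$ on $U$, yielding $|D|\gtrsim\hw(k)\sim 1+\hw(k)$ for small $|k|$.

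The main obstacle I anticipate is the analysis at the single point $\tilde\tau=0$, where the imaginary part vanishes identically in $|k|$ and the leading asymptotics for small $|k|$ comes from $(\mathcal{H}^{\mathrm{pv}})'(0)$ rather than from $\varphi'$. Handling this uniformly together with nearby $\tilde\tau\ne 0$ (where the nondegeneracy is furnished by $\varphi'$) requires combining two different mechanisms into one continuous lower bound on $|\mathcal{H}'(\tilde\tau/2-i0)|$, and is the technical heart of the quantitative estimate \eqref{eq:lowbd}.
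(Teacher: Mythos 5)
Your proposal is correct, and its qualitative core coincides with the paper's argument: both pass to the boundary values of $\cH$ via Plemelj, observe that $\Im D(i\tilde\tau|k|,k)$ is a positive multiple of $\varphi(\tfrac{\tilde\tau+|k|}{2})-\varphi(\tfrac{\tilde\tau-|k|}{2})$, exploit evenness and strict monotonicity of $\varphi$, and then check the real part in the degenerate configuration. Where you genuinely diverge is in the organization: the paper proves the uniform bound \eqref{eq:lowbd} from the pointwise estimate $|D(i\tilde\tau|k|,k)|\ge \frac{\pi\hw(k)}{4}|\varphi'(\tilde\tau/2)|$ away from $\tilde\tau=0$ together with a separate computation showing $D(0,k)\ge 1+c\,\hw(k)$, whereas you split in $|k|$ (large $|k|$ via $|D-1|\lesssim |k|^{-1}\hw(k)$, intermediate $|k|$ via compactness and continuity, small $|k|$ via the uniform expansion $D=1+\tfrac12\hw(k)\,\cH'(\tilde\tau/2-i0)+o(\hw(k))$, with a lower bound on $|\cH'(\cdot-i0)|$ obtained by combining $\Im\cH'=\pi\varphi'\neq 0$ for $\tilde\tau\neq0$ with the nonvanishing real part at $\tilde\tau=0$). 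Your small-$|k|$ treatment handles the neighborhood of $\tilde\tau=0$, where the paper's pointwise bound degenerates, in a single continuous argument, and you correctly identify this as the technical heart; you also treat explicitly the configuration $a\ge\Upsilon$, $b\le-\Upsilon$ (possible when $|k|\ge 2\Upsilon$ and $\tilde\tau\neq0$, where the imaginary part vanishes trivially) by showing the real part exceeds $1$, a case the paper's ``non-symmetric intersection'' remark passes over quickly and addresses via the real part only at $\tilde\tau=0$. The trade-off is that the paper's route is shorter and more explicit, while yours is more systematic about uniformity in $(\tilde\tau,k)$ and covers the large-$|k|$ regime (where $\hw(k)\to 0$ and one needs $|D|\gtrsim 1$ rather than $|D|\gtrsim\hw(k)$) more transparently.

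One minor correction: your value of the derivative at the origin has the wrong sign. With your convention $\cH^{\mathrm{pv}}(s)=\mathrm{p.v.}\int\varphi(u)/(s-u)\,du$, integration by parts gives $\cH'(z)=\int\varphi'(u)/(z-u)\,du$, whose boundary value at $s=0$ has real part
\[
(\cH^{\mathrm{pv}})'(0)=-2\int_0^\infty\frac{\varphi'(u)}{u}\,du=2\int_0^\infty\frac{|\varphi'(u)|}{u}\,du>0,
\]
not a negative quantity. This is harmless for your argument, since you only use that $|\cH'(\tilde\tau/2-i0)|$ is bounded below on $U$, but the sign should be fixed (it is also the sign consistent with the paper's lower bound $D(0,k)\ge 1+\frac{\hw(k)}{2}\int_0^{\Upsilon-|k|/2}\frac{|\varphi'(u)|}{u}\,du$).
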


\begin{proof} Fix $k\not =0$, and let $\lambda = (\tilde \gamma + i\tilde \tau) |k|$ with $|\tilde \tau| < 2\Upsilon + |k|$. Recalling \eqref{def-Dlambda1}, we compute 
$$
\begin{aligned}
D((\tilde \gamma + i\tilde \tau) |k|,k) 
= 1 + \frac{\hw(k)}{2|k|} \left[ \cH ( \frac{-i \tilde\gamma + \tilde\tau+|k|}{2} )  - \cH( \frac{-i\tilde\gamma + \tilde\tau-|k|}{2})  \right] ,
\end{aligned}
$$
in which by definition \eqref{def-cH}, 
\[
\cH ( \frac{-i\tilde\gamma + \tilde\tau\pm |k|}{2} ) 
= \int_{\{|u| < \Upsilon\}} \frac{\varphi(u)}{\frac{-i\tilde\gamma + \tilde\tau\pm |k|}{2} - u} \;du.
\]
Using the Plemelj's formula, we obtain
\begin{equation}\label{Plemelj}
 \cH ( \frac{ \tilde\tau\pm |k|}{2} ) := \lim_{\tilde\gamma \to 0^+}\cH ( \frac{-i\tilde\gamma + \tilde\tau\pm |k|}{2} ) 
=  PV \int_{\{|u|<\Upsilon\}} \frac{\varphi(u)}{\frac{\tilde{\tau} \pm |k|}{2} - u } du  +i\pi  \varphi(\frac{\tilde{\tau} \pm |k|}{2}) ,
\end{equation}
for $|\tilde \tau| < 2\Upsilon + |k|$, where $PV$ denotes the Cauchy principal value. Note that in case when the point $\frac{\tilde{\tau} \pm |k|}{2}$ does not belong to the support of $\varphi(u)$, the PV integral becomes the regular integration. Combining, we thus obtain 
\begin{equation}\label{form-Ditau2}
\begin{aligned}	
D(i\tilde \tau |k|,k) 
	= 1 &+ \frac{\hat{w}(k)}{2|k|} 
		\left[ PV \int_{\{|u|<\Upsilon\}} \frac{\varphi(u)}{\frac{\tilde\tau + |k|}{2} -u } du 
			- PV \int_{\{|u|<\Upsilon\}} \frac{\varphi(u)}{\frac{\tilde\tau - |k|}{2} - u} du \right]\\[3pt]
		& + \frac{i\pi\hat{w}(k)}{2|k|} 
		\left[ \varphi \left( \frac{\tilde\tau + |k|}{2} \right) - \varphi \left(\frac{\tilde\tau - |k|}{2} \right) \right].
\end{aligned}
\end{equation}
Therefore, we may obtain a lower bound on $D(i\tilde\tau|k|, k)$ by its imaginary part, giving 
$$
\begin{aligned}
|D(i\tilde \tau |k|,k) |
&\ge \frac{\pi\hat{w}(k)}{2|k|} 
\Big|\varphi \left( \frac{\tilde\tau + |k|}{2} \right) - \varphi \left(\frac{\tilde\tau - |k|}{2} \right) \Big|
\ge \frac{\pi\hat{w}(k)}{2|k|} 
\Big|\int_{\frac{\tilde\tau - |k|}{2} }^{\frac{\tilde\tau + |k|}{2} } \varphi'(x)\; dx \Big|.
\end{aligned}$$
Recall from Lemma \ref{lem-varphi} that $\varphi(u)$ is even in $u$ and strictly decreasing for $0 < u < \Upsilon$ (and by definition of $\Upsilon$, $\varphi(u) =0$ for $|u|\ge \Upsilon$). 
Therefore, the above never vanishes for $0<|\tilde \tau| <2\Upsilon + |k|$, since the integration domain 
$$(\frac{\tilde\tau - |k|}{2},\frac{\tilde\tau + |k|}{2} ) \cap (-\Upsilon, \Upsilon) $$
is non empty and non symmetric (except for $\tilde \tau=0$). In addition, for $|\tilde \tau| <2\Upsilon$, 
\begin{equation}\label{low-Dtau}
\begin{aligned}
|D(i\tilde \tau |k|,k)|
\ge  \frac{\pi\hat{w}(k)}{4} |\varphi'(\frac{\tilde\tau}{2})|
\end{aligned}\end{equation}
which again never vanishes except for $\tilde \tau=0$, giving \eqref{eq:lowbd} on $U\setminus(-\delta_0,\delta_0)$ for any positive constant $\delta_0$. On the other hand, at $\tilde \tau =0$, we note that $D(0,k)$ is real-valued and computed by 
\begin{align*}
	D(0,k)
	&= 1 + \frac{\hat{w}(k)}{2|k|} 
	\left[PV \int_{\{|u|<\Upsilon\}} \frac{\varphi(u)}{\frac{|k|}{2} - u} du 
	- PV \int_{\{|u|<\Upsilon\}} \frac{\varphi(u)}{-\frac{|k|}{2}-u} du \right].
\end{align*}
Consider first the case when $|k|\ge 2\Upsilon$, for which $PV$ integrals become the usual integration, giving 
\begin{align*}
	D(0,k)
	&= 1 + \frac{\hat{w}(k)}{2} 
	\int_{\{|u|<\Upsilon\}} \frac{\varphi(u)}{\frac{|k|^2}{4} - u^2} du \ge 1 .
\end{align*}
 Note that since $|k|\ge 2\Upsilon$, we have $|\hw(k)| \lesssim 1$, and so $D(0,k)\gtrsim 1 + |\hw(k)|$ in this case. It remains to study the case when $|k|\le 2\Upsilon$. By definition, we compute 
\begin{align*}
	PV \int_{\{ |u| < \Upsilon \}} \frac{\varphi(u)}{\frac{|k|}{2} - u} \, du 
	&= \lim_{ \epsilon \rightarrow 0^{+}} \left[ \int_{-\Upsilon}^{\frac{|k|}{2} - \epsilon} \frac{\varphi(u)}{\frac{|k|}{2} - u} \, du + \int_{\frac{|k|}{2} + \epsilon}^{\Upsilon} \frac{\varphi(u)}{\frac{|k|}{2} - u} \, du  \right] \\[3pt]
	&= \lim_{ \epsilon \rightarrow 0^{+}} \left[ \int_{\epsilon}^{\Upsilon + \frac{|k|}{2}} \frac{\varphi( u- \frac{|k|}{2} )}{u} \, du - \int_{\epsilon}^{\Upsilon - \frac{|k|}{2}} \frac{\varphi(u+\frac{|k|}{2} )}{u}\, du  \right] 
	\\[3pt]
	&= \lim_{\epsilon \rightarrow 0^{+}} 
		\int_{\epsilon}^{\Upsilon - \frac{|k|}{2}} \frac{\varphi(u -\frac{|k|}{2} ) - \varphi( u +\frac{|k|}{2})}{u} \, du + \int_{\Upsilon - \frac{|k|}{2}}^{\Upsilon + \frac{|k|}{2}} \frac{\varphi( u-\frac{|k|}{2} )}{u} \, du 
\\
	&= 		\int_0^{\Upsilon - \frac{|k|}{2}} \frac{\varphi(u -\frac{|k|}{2} ) - \varphi( u +\frac{|k|}{2})}{u} \, du + \int_{\Upsilon - \frac{|k|}{2}}^{\Upsilon + \frac{|k|}{2}} \frac{\varphi( u-\frac{|k|}{2} )}{u} \, du
		,
\end{align*}
upon recalling from Lemma \ref{lem-varphi} that $\varphi'(u)/u$ is regular. Recall that $\varphi(u)$ is even in $u$ and $\varphi'(u) < 0$ for $0 < u < \Upsilon$. In particular, the integrands in the above integrals are non-negative. Therefore, we obtain 
\begin{align*}
	D(0,k)
&= 1 + \frac{\hat{w}(k)}{|k|}  PV \int_{\{|u|<\Upsilon\}} \frac{\varphi(u)}{ \frac{|k|}{2} - u} du
\ge 1 + \frac{\hat{w}(k)}{|k|}  \int_{0}^{\Upsilon - \frac{|k|}{2}} \frac{\varphi(u -\frac{|k|}{2} ) - \varphi( u +\frac{|k|}{2})}{u} \, du
	\end{align*}
which in particular proves $D(0,k)\ge 1$, and hence $D(0,k) \gtrsim 1 + |\hw(k)|$ for $|k|$ being bounded away from zero on which $|\hw(k)|\lesssim 1$. For $|k|\ll1$, the above yields 
\begin{align*}
	D(0,k)
&\ge 1 + \frac{\hat{w}(k)}{2}  \int_{0}^{\Upsilon - \frac{|k|}{2}} \frac{|\varphi'(u)|}{u} \;du.
	\end{align*}
By Lemma \ref{lem-varphi}, $\varphi'(u)/u$ is regular, and so the integral term is strictly positive, yielding $D(0,k) \gtrsim 1 + |\hw(k)|$. Combining with \eqref{low-Dtau}, we obtain the uniform lower bound \eqref{eq:lowbd} as stated.    
\end{proof}

\subsection{Oscillatory modes}\label{sec-oscmode}

In this section, we prove the existence of pure imaginary solutions to the dispersion relation $D(\lambda,k)=0$ for $\lambda = i\tau$, necessarily for $|\tau|\ge 2|k|\Upsilon + |k|^2$ and $\Upsilon< \infty$, thanks to Proposition \ref{prop:nonexistence}. Namely, we obtain the following.

\begin{theorem}\label{theo-LangmuirE} 
Let $\mu(|p|^2)$ be the equilibrium as described in Section \ref{sec-mu}, with a finite constant density $\rho_\mu = \int_{\RR^d} \mu(|p|^2)\; dp$. Then, there is a non-negative constant $\kappa_0$ defined through 
\begin{equation}\label{def-k0-rel}\kappa_0^2 = \frac{1}{2} \int_{|u|<\Upsilon} \frac{\varphi(u)}{(\Upsilon-u)(\Upsilon+\kappa_0-u)} \, du \end{equation}
so that for any $0\le |k| \le \kappa_0$, there are exactly two zeros $\lambda_\pm= \pm i \tau_*(k)$ of the dispersion relation $ D(\lambda,k) = 0$ on the imaginary axis $\{\Re \lambda =0\}$, where $\tau_*(k)$ is sufficiently regular, strictly increasing in $|k|$, with 
\begin{equation}\label{tau-a0}\tau_{*}(0)= \sqrt{2\rho_\mu}
, \qquad \tau_*(\kappa_0) = 2\kappa_0\Upsilon + \kappa_0^2.
\end{equation}
In particular, there are positive constants $c_0$ and $C_0$ such that 
\begin{equation}\label{lowerbound-taustarDE} 
 c_0 |k|\le \tau'_*(k) \le C_0|k|, \qquad  c_0 \le \tau''_*(k) \le C_0,
\end{equation}
for $0\le |k|\le \kappa_0$.
\end{theorem}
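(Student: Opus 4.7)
The plan is to pass to the imaginary axis, reduce $D(i\tau,k)=0$ to a real scalar equation, and then apply the intermediate value theorem (IVT) and the implicit function theorem (IFT). By Proposition~\ref{prop:nonexistence}, any purely imaginary zero $\lambda=i\tau$ must satisfy $|\tau|\ge 2|k|\Upsilon+|k|^2$, which forces both arguments $(\tau\pm|k|^2)/(2|k|)$ of $\cH$ to lie outside the support $(-\Upsilon,\Upsilon)$ of $\varphi$. Consequently the imaginary part of \eqref{form-Ditau2} vanishes and the principal-value integrals become ordinary ones. Using the Coulomb form $\hw(k)=c_d|k|^{-2}$ (with the constant absorbed into $\kappa_0$) and setting $\nu=\tau/(2|k|)$, a short computation collapses $D(i\tau,k)=0$ into the real equation
\[
 H(\nu,\kappa) \,:=\, \int_{|u|<\Upsilon}\frac{\varphi(u)}{(\nu-u)^2-\kappa^2/4}\,du \,=\, 2\kappa^2,\qquad \nu\ge \Upsilon+\kappa/2,\ \kappa=|k|.
\]

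Differentiating under the integral gives $\partial_\nu H<0$ and $\partial_\kappa H>0$ on $\{\nu>\Upsilon+\kappa/2\}$, and the decay $\varphi(u)\sim(\Upsilon-u)^{N_1}$ with $N_1\ge 2$ from Lemma~\ref{lem-varphi} makes $H$ continuous up to the boundary $\nu=\Upsilon+\kappa/2$. The boundary map $\kappa\mapsto H(\Upsilon+\kappa/2,\kappa)=\int\varphi(u)[(\Upsilon-u)(\Upsilon+\kappa-u)]^{-1}du$ is then strictly decreasing, finite and positive at $\kappa=0^+$, and tends to $0$ as $\kappa\to\infty$; its unique crossing with the strictly increasing $\kappa\mapsto 2\kappa^2$ is precisely the $\kappa_0$ characterized by \eqref{def-k0-rel}. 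For each $0<|k|<\kappa_0$ one has $H(\Upsilon+|k|/2,|k|)>2|k|^2$ and $H(\nu,|k|)\to 0$ as $\nu\to\infty$, so IVT together with strict monotonicity in $\nu$ produces a unique $\nu_*(|k|)>\Upsilon+|k|/2$ solving $H(\nu_*(|k|),|k|)=2|k|^2$. Setting $\tau_*(|k|):=2|k|\nu_*(|k|)$ and using the real symmetry $D(\bar\lambda,k)=\overline{D(\lambda,k)}$ yields the pair $\lambda_\pm(k)=\pm i\tau_*(k)$, and the strict monotonicity of $H$ in $\nu$ together with Proposition~\ref{prop:nonexistence} rules out any further imaginary zero.

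At $|k|=\kappa_0$, by construction $\nu_*(\kappa_0)=\Upsilon+\kappa_0/2$, hence $\tau_*(\kappa_0)=2\kappa_0\Upsilon+\kappa_0^2$. For $|k|\to 0^+$ one has $\nu_*\to\infty$, and the large-$\nu$ expansion
\[
 H(\nu,\kappa) \,=\, \frac{\rho_\mu}{\nu^2} \,+\, O(\nu^{-4})
\]
(the $\nu^{-3}$ term drops by evenness of $\varphi$ and $\int\varphi=\rho_\mu$) combined with $H(\nu_*,|k|)=2|k|^2$ gives $\tau_*(|k|)^2=2\rho_\mu+O(|k|^2)$, hence $\tau_*(0)=\sqrt{2\rho_\mu}$. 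Interior smoothness of $\nu_*$ and $\tau_*$ follows from IFT on the open region $\{\nu>\Upsilon+\kappa/2,\ 0<\kappa<\kappa_0\}$ where $\partial_\nu H$ and $\partial_\kappa H$ are smooth and $\partial_\nu H\neq 0$; smoothness across $|k|=0$ is obtained by recasting the defining relation as one between $\tau_*^2$ and $|k|^2$ and inverting it analytically, while smoothness up to $|k|=\kappa_0$ uses the boundary integrability of $\partial_\nu H,\partial_\kappa H$ granted by $N_1\ge 2$.

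The Klein--Gordon bounds \eqref{lowerbound-taustarDE} then come from the implicit formulas $\nu_*'=(4\kappa-\partial_\kappa H)/\partial_\nu H$, $\tau_*'=2\nu_*+2\kappa\nu_*'$, and the analogous expression for $\tau_*''$. Near $|k|=0$, the expansion $\tau_*^2=2\rho_\mu+c_1|k|^2+O(|k|^4)$ with $c_1>0$ gives $\tau_*'\sim c|k|$ and $\tau_*''\to c>0$, while the continuity and strict signs of $\partial_\nu H,\partial_\kappa H$ on any compact sub-interval of $(0,\kappa_0]$ yield uniform two-sided bounds and the strict monotonicity $\tau_*'>0$. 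I expect the most delicate step to be the uniform control at the right endpoint $|k|=\kappa_0$, where the factor $(\nu-u)^2-|k|^2/4=(\nu-|k|/2-u)(\nu+|k|/2-u)$ collapses to $(\Upsilon-u)(\Upsilon+|k|-u)$ and vanishes at $u=\Upsilon$: the integrands for $\partial_\nu H,\partial_\kappa H$ behave there like $\varphi(u)(\Upsilon-u)^{-2}\sim(\Upsilon-u)^{N_1-2}$, so the sharp decay assumption $N_1\ge 2$ must be used carefully to keep these derivatives finite and of the correct sign, and a similar tracking of cancellations in the second-order integrands is needed to secure the uniform upper bound $\tau_*''\le C_0$ up to $\kappa_0$.
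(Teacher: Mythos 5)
Your reduction to the real scalar equation $H(\nu,\kappa)=2\kappa^2$ with $\nu=\tau/(2|k|)$, the definition of $\kappa_0$ via the boundary value $H(\Upsilon+\kappa/2,\kappa)=\int\varphi(u)[(\Upsilon-u)(\Upsilon+\kappa-u)]^{-1}du$, the IVT/monotonicity argument for existence and uniqueness of $\pm i\tau_*(k)$, and the large-$\nu$ expansion giving $\tau_*(0)=\sqrt{2\rho_\mu}$ are all essentially the paper's own argument (the paper works with $\Phi(|k|)=D(i(2|k|\Upsilon+|k|^2),k)$ and with $\tau$ rather than $\nu$, but the content is identical). That part of your proposal is sound.

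The genuine gap is in \eqref{lowerbound-taustarDE} away from $k=0$. You claim that ``continuity and strict signs of $\partial_\nu H,\partial_\kappa H$'' on compact subintervals of $(0,\kappa_0]$ yield the two-sided bounds and $\tau_*'>0$; they do not. From your own formulas, $\nu_*'=(4\kappa-\partial_\kappa H)/\partial_\nu H$ with $\partial_\nu H<0$, and $\tau_*'=2\nu_*+2\kappa\nu_*'$ is a difference of two competing positive quantities (indeed $\nu_*$ decreases from $\infty$ to $\Upsilon+\kappa_0/2$, so $\nu_*'<0$ on a large part of the range). Writing out the positivity condition, $\tau_*'>0$ is equivalent to
\begin{equation*}
\int_{|u|<\Upsilon}\frac{\bigl[4u(\nu_*-u)+2\kappa^2\bigr]\varphi(u)}{\bigl[(\nu_*-u)^2-\kappa^2/4\bigr]^2}\,du>0,
\end{equation*}
whose integrand changes sign for $u<0$; positivity (and the sharper bound $c_0|k|\le\tau_*'\le C_0|k|$) requires the symmetrization-in-$u$ argument exploiting evenness of $\varphi$, which is exactly the paper's estimate of the odd-moment integrals $I_{1,1}$ in \eqref{eq:In} feeding into \eqref{eq:taustarprime}. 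The same issue is more severe for the lower bound $\tau_*''\ge c_0$: continuity at best gives an upper bound, while strict convexity is a nontrivial sign statement about the second-derivative integrand in \eqref{eq:taustardoubleprime}, which the paper settles only through an explicit sum-of-squares identity ($\tfrac32(AB-|k|^3)^2+\tfrac18|k|^2(|k|B-4A)^2$ with $A=\tau_*/2-|k|u$, $B=\tau_*'-2u$); nothing in your proposal produces this sign. Finally, the endpoint issue you flag is real: at $|k|=\kappa_0$ the second-order integrands carry the cubed factor $[(\nu-u)^2-\kappa^2/4]^{-3}\sim(\Upsilon-u)^{-3}$ near $u=\Upsilon$, so with the borderline decay $N_1\ge2$ boundedness of $\tau_*''$ is not a consequence of naive differentiation under the integral and must come from the specific combination of terms; announcing that ``a similar tracking of cancellations is needed'' does not supply that argument. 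In short, the qualitative part of the theorem is correctly reproduced, but the quantitative Klein--Gordon bounds \eqref{lowerbound-taustarDE} for $0<|k|\le\kappa_0$ --- precisely where the paper does its real work --- are asserted rather than proved.
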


\begin{remark} 
In the case when $\Upsilon =\infty$, Theorem \ref{theo-LangmuirE} in fact holds with $\kappa_0 =0$ so that there are only trivial solutions $\lambda_\pm(0) = \pm i \tau_0$ with $\tau_0^2 = 2\rho_\mu$. 
\end{remark}

\begin{proof} In view of Section \ref{sec-emlambda}, it is enough to consider the case when $\Upsilon<\infty$ and $\lambda = i\tau$, with $|\tau|\ge 2|k| \Upsilon + |k|^2$. In this case, $| \frac{-\tau \pm |k|^2}{2|k|} | > \Upsilon$ so we can compute from \eqref{def-Dlambda1} that
\begin{equation}
\begin{aligned}\label{eq:Displargetau}
	D(i \tau, k)
	&=1 +  \frac{\hat{w}(k)}{2|k|}
	\left[  \int_{ \{|u| < \Upsilon \}} \frac{\varphi(u)}{\frac{\tau +|k|^2}{2|k|} - u} \, du - \int_{ \{|u| < \Upsilon \}} \frac{\varphi(u)}{\frac{\tau -|k|^2}{2|k|} - u} \, du\right] \\[3pt]
	&= 1 - \frac{\hat{w}(k)}{2}
	 \int_{ \{|u| < \Upsilon \}} \frac{\varphi(u)}{ \left( \frac{\tau -|k|^2}{2|k|} - u \right) \left( \frac{\tau +|k|^2}{2|k|} - u \right)} \, du. 
\end{aligned}
\end{equation}
Observe that $D(i \tau, k)$ is real-valued and even in $\tau$, since $\varphi(u)$ is even in $u$. Therefore, it suffices to consider the case when $\tau\ge 2|k| \Upsilon + |k|^2$.  
It follows that
\begin{equation}\label{eq:defPhi}
	\Phi(|k|): = D( i(2|k|\Upsilon +|k|^2),k)
	= 1- \frac{\hat{w}(k)}{2} \int_{|u|<\Upsilon} \frac{\varphi(u)}{(\Upsilon-u)(\Upsilon+|k|-u)} \, du .
\end{equation}
Then $\Phi$ is a real-valued function on $[0, \infty]$, with $\Phi(0) = -\infty$ and $\Phi(\infty) =1$, recalling that $\hw(k) = |k|^{-2}$. Also, we have
\[
	\Phi'(|k|) 
	=  -\frac{\hw'(k)}{2} \int_{ \{ |u| < \Upsilon \}} \frac{\varphi(u)}{(\Upsilon-u)(\Upsilon+|k|-u)} \, du 
	+ \frac{\hw(k)}{2} \int_{ \{|u|<\Upsilon \}} \frac{\varphi(u)}{(\Upsilon-u)(\Upsilon+|k|-u)^2} \, du
\]
is strictly positive so that $\Phi$ is a bijection from $[0, \infty]$ onto $[-\infty, 1]$. Therefore, there exists the unique positive number $\kappa_{0}$ satisfying 
\begin{equation}\label{existence-k0}
\Phi(\kappa_{0}) =0.
\end{equation}
In addition, $\Phi(k) <0 $ for $|k|<\kappa_0$ and $\Phi(k)>0$ for $|k|>\kappa_0$. On the other hand, for $\tau> 2|k| \Upsilon + |k|^2$, we compute 
\begin{align*}
	\partial_{\tau} D(i \tau, k)
	&= \frac{\hat{w}(k)}{2|k|}
	\int_{\{|u| < \Upsilon \}} \frac{ ( \frac{\tau}{2|k|} - u )\varphi(u)}{\left[ (  \frac{\tau}{2|k|} - u )^2 - \frac{|k|^2}{4} \right]^2} \, du,
\end{align*}
which is strictly positive for $\tau > 2|k| \Upsilon + |k|^2$. As a consequence, $D(i \tau, k)$ is a bijection from $[2|k|\Upsilon + |k|^2, \infty]$ onto $[\Phi(|k|) , 1]$.

Now, in view of \eqref{existence-k0}, if $|k| > \kappa_{0}$, then the dispersion relation has no pure imaginary zeros since $\Phi(|k|) >0$. On the other hand, if $0 < |k| < \kappa_{0}$, then $\Phi(|k|) < 0$ so that there exists an unique radial function $\tau_* (k) \in [2|k| \Upsilon +|k|^2, \infty]$ satisfying $D(i \tau_*(k), k) =0$. Since $D(i \tau, k)$ is even in $\tau$, we obtain exactly two solutions $\lambda_{\pm} = \pm i \tau_* (k)$ of $D(\lambda, k)=0$, for $0 \le |k| \le \kappa_{0}$. Observe from \eqref{eq:Displargetau} that $\tau_*(k)$ solves
\begin{equation}\label{eq:taustar}
	\int_{\{|u| < \Upsilon \}} \frac{ \varphi(u)}{ (  \frac{\tau_* (k) }{2} - |k|u )^2 - \frac{|k|^4}{4}} \, du =  2,
\end{equation}
upon recalling that $\hw(k) = |k|^{-2}$. Note that by construction, \eqref{tau-a0} follows. 

Next, we study the dispersive property of $\tau_*(k)$. Indeed, differentiating both sides of \eqref{eq:taustar} with respect to $|k|$, we have
\begin{align*}
	\int_{\{|u| < \Upsilon \}} \frac{ \left( (\frac{\tau_* (k)}{2} -|k|u) (\tau_* ' (k) - 2u) - |k|^3\right)\varphi(u)}{\left( (\frac{\tau_* (k) }{2} - |k|u )^2 - \frac{|k|^4}{4} \right)^2} \, du = 0
\end{align*}
so that
\begin{equation}\label{eq:taustarprime}
	\tau_* '(k) \int_{\{|u| < \Upsilon \}} \frac{  (\frac{\tau_* (k)}{2} -|k|u)\varphi(u)}{\left( (\frac{\tau_* (k) }{2} - |k|u )^2 - \frac{|k|^4}{4} \right)^2} \, du 
	=   \int_{\{|u| < \Upsilon \}} \frac{ (|k|^3 + \tau_* (k) u -2 |k| u^2)\varphi(u)}{ \left( (\frac{\tau_* (k) }{2} - |k|u )^2 - \frac{|k|^4}{4} \right)^2} \, du. 
\end{equation}
For $n  =0, \,1$ and $m \ge 0$, define
\[
	I_{n,m}(k) = \int_{\{ |u| < \Upsilon \}} \frac{  (\frac{\tau_* (k)}{2} -|k|u)^n u^m \varphi(u)}{ \left( (\frac{\tau_* (k) }{2} - |k|u )^2 - \frac{|k|^4}{4} \right)^2} \, du,
\]
for $0 \le |k| \le \kappa_{0}$. Note that $I_{n,m}$ is radial in $k$. We claim that for $l \ge 0$, $n=0, \,1$, and $0 \le |k| \le \kappa_{0}$, there hold
\begin{equation}\label{eq:In}
	c_{0} \le I_{n, 2l}(k) \le C_{0}, \qquad c_{0}|k| \le  I_{n, 2l+1} (k) \le C_{0}|k|.
\end{equation}
for some positive constants $c_{0}$ and $C_{0}$, depending on $l$.

Recall that $\tau_{\ast} (k) \ge 2|k| \Upsilon + |k|^2$ so that $\frac{\tau_{\ast}(k)}{2} -|k|u > \frac{|k|^2}{2}$ for $|u| <\Upsilon$. Therefore, if $m=2l$, then $u^{2l} \varphi(u) \ge 0$ so that $I_{n, 2l}(k) \ge 0$. Recalling from \eqref{tau-a0}, we obtain the claim \eqref{eq:In} for $I_{n, 2l}(k) $. On the other hand, if $m= 2l+1$ and $n=0$, then we use the fact that $\varphi$ is even to obtain
\begin{align*}
	I_{0, 2l+1}(|k|) 
	&= \int_{0}^{\Upsilon}  
	\left[
	\frac{  u^{2l+1} \varphi(u)}{ \left( (\frac{\tau_* (k) }{2} - |k|u )^2 - \frac{|k|^4}{4} \right)^2} 
	- \frac{ u^{2l+1} \varphi(u)}{\left( (\frac{\tau_* (k) }{2} + |k|u )^2 - \frac{|k|^4}{4} \right)^2} 
	\right]  \, du \\[3pt]
	&= |k| \int_{0}^{\Upsilon} 
	\left[
	\frac{\tau_{*}(k)(\tau_{*}(k)^2 +4|k|^2 u^2 -|k|^4) u^{2l+2} \varphi(u)}{\left( (\frac{\tau_* (k) }{2} - |k|u )^2 - \frac{|k|^4}{4} \right)^2 \left( (\frac{\tau_* (k) }{2} + |k|u )^2 - \frac{|k|^4}{4} \right)^2}
	\right] \, du
\end{align*}
Recall that for $0 \le u < \Upsilon$, we get $\tau_{*} > 2|k|u + |k|^2$ so that the integral is strictly positive. The continuity of $I_{0, 2l+1}$ on $\{ 0 \le |k| \le \kappa_{0} \}$ again implies the claim. Similarly, when $m=2l+1$ and $n=1$, we compute that
\begin{align*}
	&I_{1, 2l+1}(|k|) 
	= \int_{0}^{\Upsilon}  
		\left[
		\frac{ (\frac{\tau_* (k)}{2} -|k|u) u^{2l+1} \varphi(u)}{ \left( (\frac{\tau_* (k) }{2} - |k|u )^2 - \frac{|k|^4}{4} \right)^2} 
		- \frac{(\frac{\tau_* (k)}{2} +|k|u) u^{2l+1} \varphi(u)}{\left( (\frac{\tau_* (k) }{2} + |k|u )^2 - \frac{|k|^4}{4} \right)^2} 
		\right]  \, du \\[3pt]
	&= \frac{|k|}{8} \int_{0}^{\Upsilon} 
		\left[
			\frac{(-8 \tau_{*}(k)^2 |k|^2 u^2 - 2 \tau_{*}(k)^2 |k|^4 + 3 \tau_{*}(k)^4 - 16 |k|^4 u^4 + 8|k|^6 u^2 - |k|^8 )u^{2l+2} \varphi(u)}{\left( (\frac{\tau_* (k) }{2} - |k|u )^2 - \frac{|k|^4}{4} \right)^2 \left( (\frac{\tau_* (k) }{2} + |k|u )^2 - \frac{|k|^4}{4} \right)^2}
		\right] \, du.
\end{align*}
Using $\tau_{*}(k)^2 > 4 |k|^2 u^2 + |k|^4$ for $0 \le u \le \Upsilon$, it follows that
\begin{multline*}
	-8 \tau_{*}(k)^2 |k|^2 u^2 - 2 \tau_{*}(k)^2 |k|^4 + 3 \tau_{*}(k)^4 - 16 |k|^4 u^4 + 8|k|^6 u^2 - |k|^8\\
	= 2 \tau_{*}(k)^2 \left(\tau_{*}(k)^2 -  4 |k|^2 u^2 -|k|^4 \right) + \tau_{*}(k)^4 - (4|k|^2 u^2 + |k|^4 )^2 > 0
\end{multline*}
which concludes the proof of \eqref{eq:In}.

Next, from \eqref{eq:taustarprime}, we have 
\[
	\tau_{*}'(k) = \frac{|k|^3 I_{0, 0} + 2 I_{1,1}}{I_{1,0}}
\]
which together with \eqref{eq:In} gives
$ 	c_{0} |k| \le \tau_{*}'(k) \le C_{0} |k|
$ for some positive constants $c_{0}$ and $C_{0}$ independent of $k$. 
Next, we prove that $\tau_{*} ''(k) \ge c_{0}$ for $0 \le |k| \le \kappa_{0}$, where $c_{0}$ is some positive constant. Note that
\begin{multline*}
	\frac{\partial^2}{\partial |k|^2} \frac{1}{(  \frac{\tau_* }{2} - |k|u )^2 - \frac{|k|^4}{4}}
	= \frac{1}{((  \frac{\tau_* }{2} - |k|u )^2 - \frac{|k|^4}{4})^3} 
	\bigg[ 2\left( (\frac{\tau_*}{2} - |k|u) (\tau_{*}' - 2u) - |k|^3 \right)^2 \\
	- \left( (\frac{\tau_{*}}{2} - |k|u)^2 - \frac{|k|^4}{4}  \right) 
	\left( \frac{1}{2} (\tau_{*}' - 2u)^2 + (\frac{\tau_*}{2} - |k|u) \tau_{*}'' - 3|k|^2 \right)
	\bigg].
\end{multline*}
Differentiating \eqref{eq:taustar} with respect to $|k|$ twice,
\begin{multline}\label{eq:taustardoubleprime}
	I_{1,0}(k) \, \tau_{*}'' 
	= \int_{\{|u| < \Upsilon \}} 
	\frac{\varphi(u)}{\left( (\frac{\tau_* }{2} - |k|u )^2 - \frac{|k|^4}{4} \right)^3} \bigg[
	2\left( (\frac{\tau_*}{2} - |k|u) (\tau_{*}' - 2u) - |k|^3 \right)^2 \\
	- \left( (\frac{\tau_{*}}{2} - |k|u)^2 - \frac{|k|^4}{4}  \right) 
	\left( \frac{1}{2} (\tau_{*}' - 2u)^2 - 3|k|^2 \right) 
	\bigg] \, du .
\end{multline}
The right hand side of \eqref{eq:taustardoubleprime} is continuous on $\{ 0 \le |k| \le \kappa_{0}\}$, so it suffices to show that the right hand side is strictly positive on $\{ 0 \le |k| \le \kappa_{0} \}$. Let 
\[
A = \frac{\tau_{*}}{2} - |k|u, \qquad B = \tau_{*}' - 2u.
\]
Then the numerator of the integrand in the right hand side of \eqref{eq:taustardoubleprime} is
\[
\varphi(u) \left[ \frac{5}{4}|k|^6 + \frac{3}{2} A^2 B^2 - 4AB |k|^3 + 3|k|^2 A^2 + \frac{1}{8}|k|^4 B^2 \right].
\]
Using $A=\tau_{*}/2 - |k| u > |k|^2 /2$, we compute that
\begin{align*}
	\frac{5}{4}|k|^6 + \frac{3}{2} A^2 B^2 - 4AB |k|^3 + 3|k|^2 A^2 + \frac{1}{8}|k|^4 B^2
	&\ge \frac{3}{2} |k|^6 + \frac{3}{2} A^2 B^2 - 4 AB|k|^3  + 2|k|^2 A^2 + \frac{1}{8} |k|^4 B^2 \\
	&= \frac{3}{2}( AB - |k|^3)^2 + \frac{1}{8}|k|^2 ( |k|B -4A)^2 \ge 0.
\end{align*}
The term in the last line is zero if and only if
\[
AB= |k|^3 \quad \textrm{and} \quad |k| B = 4A,	
\]
which implies $A=\tau_{*}/2 - |k|u = |k|^2 /2$. This contradicts to $A > |k|^2 /2$, so the integral on the right hand of \eqref{eq:taustardoubleprime} is strictly positive for $0 \le |k| \le \kappa_0$. The lower bound on $\tau_{*}''$ thus follows. 
This completes the proof of Theorem \ref{theo-LangmuirE}.
\end{proof}

\begin{remark}\label{rem-genw}
The existence of plasmons (i.e. oscillatory modes) in Theorem \ref{theo-LangmuirE} holds for more general long-range repulsive interaction potentials. Indeed, the threshold $\kappa_0$ exists through the relation  
$$
\frac{\hat{w}(\kappa_0)}{2} \int_{|u|<\Upsilon} \frac{\varphi(u)}{(\Upsilon-u)(\Upsilon+\kappa_0-u)} \, du =1
$$
as long as $\hw(k)\ge 0$, $\hat{w}(0) = \infty$, $\lim_{|k|\to \infty}\hat{w}(k) <\infty$, and $\hw'(k) \le 0$. Next, the dispersion relation function $D(i \tau, k)$ remains a bijection from $[2|k|\Upsilon + |k|^2, \infty]$ onto $[\Phi(|k|) , 1]$, since $\hw(k)\ge 0$. This gives the existence of oscillatory modes, since $\Phi(|k|)<0$ for $0\le |k|<\kappa_0$. 
\end{remark}

\subsection{Landau's damping}

Theorem \ref{theo-LangmuirE} yields that the dispersion relation $D(\lambda, k) = 0$ has pure imaginary solutions $\lambda_\pm(k) = \pm i \tau_*(k)$ for $|k|\le \kappa_0$, while such solutions do not exist for $|k|>\kappa_0$. Note that, by definition, $\kappa_0 =0$ when $\Upsilon =\infty$. In this section, we study how these curves $\lambda_\pm(k) $ leave the imaginary axis as $|k|>\kappa_0$, and in particular compute the Landau's damping rate. In view of Proposition \ref{prop-nogrowth} and Theorem \ref{theo-LangmuirE}, we have $\Re \lambda_\pm(k) <0$ for $|k|>\kappa_0$.

Precisely, we obtain the following.

\begin{theorem}\label{theo-Landau} Let $\mu(|p|^2)$ be the equilibrium as described in Section \ref{sec-mu}, $\Upsilon$ be as in \eqref{def-Upsilon}, and $\kappa_0$ be defined through \eqref{def-k0-rel}. Then, for any $0<|k| - \kappa_0\ll1$, there are exactly two zeros $\lambda_\pm(k)$ of the dispersion relation $ D(\lambda,k) = 0$ that are sufficiently regular in $|k|$, with $\lambda_\pm(\kappa_0) = \pm i \tau_*(\kappa_0)$, and $\tau_*(k) = \mp \Im \lambda_\pm(k)$ satisfy \eqref{lowerbound-taustarDE} for $\kappa_{0} \le |k| \le \kappa_{0} +\delta_{0}$. In addition, the followings hold:

\begin{itemize}

\item If $\Upsilon = \infty$, then for $|k|\ll 1$, we have 
\begin{equation}\label{Landau}
\Re \lambda_\pm(k) = {\pi} [u^2 \varphi'(u)]_{u = \nu_* (k)} ( 1 + \mathcal{O}(|k|))
\end{equation}
where $\nu_*(k) = \frac{\tau_0 }{2|k|}$ is the phase velocity, with $\tau_0 = \tau_{*}(0)$.

\item If $\Upsilon <\infty$, then for $0< |k| - \kappa_0\ll1$, we have  
\begin{equation}\label{Landau-cpt}
\Re \lambda_\pm(k) = {- \frac{\pi}{2 \widetilde{\kappa}_{1}(\kappa_{0}) |k|}} \left[ \varphi(u - \frac{|k|}{2}) - \varphi(u + \frac{|k|}{2}) \right]_{u = \nu_*(k)} (1 + \cO(|k| - \kappa_0)),
\end{equation}
where $\nu_*(k) =  \Upsilon + \frac{|k|}{2} - \frac{2\kappa_{0} - \widetilde{\kappa}_{0}' (\kappa_{0})}{\widetilde{\kappa}_{1}(\kappa_{0})}(|k| - \kappa_{0})$ is the phase velocity, where $\widetilde{\kappa_{j}}(k)$ are defined in \eqref{def-tildekappaj}. 
Since $2\kappa_{0} - \widetilde{\kappa}_{0}' (\kappa_{0}) >0$, the phase velocity $\nu_*(k) < \Upsilon + \frac{|k|}{2}$ and so $\Re \lambda_\pm(k) $ is indeed strictly negative. 

\end{itemize}

\end{theorem}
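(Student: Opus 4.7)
My strategy has three stages: (1) construct a smooth Landau continuation $D_L$ of the symbol $D$ across the imaginary axis; (2) invoke the implicit function theorem at the base point $(|k|,\lambda)=(\kappa_0,\pm i\tau_*(\kappa_0))$ (or $(0,\pm i\tau_0)$ when $\Upsilon=\infty$) to extend the curves $\lambda_\pm(k)$; and (3) separate real and imaginary parts of $D_L(\lambda_\pm,k)=0$ and isolate the damping rate from the $i\pi\varphi$ contribution to $D_L$ prescribed by the Plemelj identity \eqref{Plemelj}. Smoothness of $D$ in $(\lambda,k)$ is supplied by Lemma \ref{lem-Dlambda}, and strict positivity of $\partial_\tau D(i\tau,k)$ on the relevant piece of the imaginary axis is already available from the proof of Theorem \ref{theo-LangmuirE}.

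For stage (1), using \eqref{def-Dlambda1} together with \eqref{Plemelj}, the boundary value on the imaginary axis reads
\begin{equation*}
D(i\tau,k)=1+\frac{\hw(k)}{2|k|}\Bigl[\,PV\!\!\int\!\frac{\varphi(u)\,du}{x_+-u}-PV\!\!\int\!\frac{\varphi(u)\,du}{x_--u}\,\Bigr]+\frac{i\pi\hw(k)}{2|k|}\bigl[\varphi(x_+)-\varphi(x_-)\bigr],
\end{equation*}
with $x_\pm=(\tau\pm|k|^2)/(2|k|)$. For $\Re\lambda$ slightly negative I define $D_L$ by replacing $\cH(z)$ with $\cH_L(z)=\int\varphi(u)/(z-u)\,du+2\pi i\,\varphi_\sharp(z)$ on the upper half-plane, where $\varphi_\sharp$ is a $C^{n_0-1}$ extension of $\varphi$ off $\RR$ provided by Lemma \ref{lem-varphi}. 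When $\Upsilon<\infty$, only $z_-$ crosses into the support $[-\Upsilon,\Upsilon]$ as $|k|$ passes $\kappa_0$, so only one Plemelj jump is active; when $\Upsilon=\infty$, both $x_\pm\to\infty$ as $|k|\to 0$ and lie in the far tail of $\varphi$. At the base point $D_L=0$ holds by Theorem \ref{theo-LangmuirE} (for $\Upsilon<\infty$), and by the Lindhard limit $D(\lambda,0)=1+2\rho_\mu/\lambda^2$ (for $\Upsilon=\infty$, noting that $D$ extends smoothly through $k=0$ thanks to the cancellation $\hw(k)/|k|\cdot[\cH(z_+)-\cH(z_-)]=O(1)$).

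For stage (2), $\partial_\lambda D_L=-i\partial_\tau D$ is nonzero at the base point: for $\Upsilon<\infty$ the strict positivity of $\partial_\tau D$ shown in the proof of Theorem \ref{theo-LangmuirE} for $\tau>2|k|\Upsilon+|k|^2$ extends to the boundary value $\tau=2\kappa_0\Upsilon+\kappa_0^2$, since $\varphi(u)(\tfrac{\tau}{2|k|}-u)\sim(\Upsilon-u)^{N_1+1}$ remains integrable against the denominator's quadratic zero at $u=\Upsilon$ thanks to $N_1\ge 2$ (Lemma \ref{lem-varphi}); for $\Upsilon=\infty$ the limit computation directly gives $\partial_\tau D(i\tau_0,0)=2/\tau_0$. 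IFT then yields smooth curves $\lambda_\pm(k)=\pm i\tau_*(k)+\Gamma_\pm(k)$ on $\kappa_0\le|k|\le\kappa_0+\delta_0$ (resp.\ $|k|\le\delta_0$); the bounds \eqref{KGdisp} propagate by continuity of the coefficients $I_{n,m}$ in \eqref{eq:taustarprime}--\eqref{eq:taustardoubleprime}, and $\Gamma_\pm<0$ follows from Proposition \ref{prop-nogrowth}. For stage (3), separating real and imaginary parts of $D_L(\Gamma_\pm\pm i\tau_*(k),k)=0$, and noting that $|\partial_\tau\Im D_L|\ll|\partial_\tau\Re D_L|$ at the base point (since $\varphi'$ vanishes at the resonance for $\Upsilon<\infty$, or is in the tail for $\Upsilon=\infty$), the linearization gives
\begin{equation*}
\Gamma_\pm(k)=\frac{\Im D_L(\pm i\tau_*(k),k)}{\partial_\tau\Re D_L(\pm i\tau_*(k),k)}\,(1+o(1)).
\end{equation*}
Substituting the Plemelj imaginary part together with the explicit expression for $\partial_\tau D$ at the base point produces \eqref{Landau-cpt} for $\Upsilon<\infty$ (with $\widetilde\kappa_1(\kappa_0)$ emerging from the denominator $(\Upsilon-u)(\Upsilon+\kappa_0-u)$ in $\partial_\tau D$) and \eqref{Landau} for $\Upsilon=\infty$ (after Taylor expanding $\varphi(\nu_*\pm|k|/2)\approx\varphi(\nu_*)\pm(|k|/2)\varphi'(\nu_*)$ and using $\tau_0^2=2\rho_\mu$).

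The principal obstacle will be the $\Upsilon<\infty$ corner at $|k|=\kappa_0$, where the integrand of $\partial_\tau D$ has a borderline singularity as the pole location $z_-$ meets the support endpoint $u=\Upsilon$; establishing the quantitative $(1+O(|k|-\kappa_0))$ error in \eqref{Landau-cpt} requires controlling the cancellation between the vanishing $(\Upsilon-u)^{N_1}$ of $\varphi$ and the quadratic zero of the denominator through Taylor expansion, uniformly in $|k|-\kappa_0$. The assumption $N_1\ge 2$ will be essential here. Once those uniform bounds at the corner are in place, IFT and the leading-order algebra are routine.
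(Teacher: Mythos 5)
Your route (a globally defined Landau-type continuation $D_L$ obtained by adding $2\pi i\varphi_\sharp(z)$ to $\cH$, then the implicit function theorem at the base point and the classical formula $\Re\lambda\approx\Im D/\partial_\tau\Re D$ via Plemelj) is genuinely different in mechanism from the paper, which never continues $\cH$ globally: it expands $D$ in a finite geometric series (in $1/z$ for $\Upsilon=\infty$, in $\Upsilon+\tfrac{|k|}{2}-z$ for $\Upsilon<\infty$) with explicit, smooth-in-$k$ coefficients $\widetilde\tau_j(k)$, $\widetilde\kappa_j(k)$, extends only the small remainders $\mathcal{R}_1,\mathcal{R}_2$ by Whitney's theorem, and reads the damping off the imaginary part of the truncated relation. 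Your skeleton is sound and would reproduce the leading constants, but as written it has two genuine gaps.

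First, the continuation is not supplied by the tools you cite: Lemma \ref{lem-varphi} gives regularity, parity, monotonicity and decay of $\varphi$ on $\RR$, but no extension off the real line; and for an arbitrary $C^{n_0-1}$ extension $\varphi_\sharp$, the function $\cH(z)+2\pi i\varphi_\sharp(z)$ is merely continuous across the axis (the jump of $\partial_z^j\cH$ is $2\pi i\varphi^{(j)}$ for every $j$), so to get the differentiability in $\lambda$ needed for the IFT and for your linearization you must build an almost-analytic extension with $\bar\partial\varphi_\sharp$ vanishing to sufficiently high order on $\RR$ and then track its error in $\Im D_L$ and in $\partial_\gamma D_L$ along the segment joining $\pm i\tau_*(k)$ to $\lambda_\pm(k)$; this step is missing. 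Second, and more substantively, the precise statements \eqref{Landau} and \eqref{Landau-cpt} do not follow from a generic IFT plus a ``$(1+o(1))$'' linearization, because the main term is degenerate: for $\Upsilon<\infty$ the Plemelj term $\varphi(\nu_*-\tfrac{|k|}{2})$ is itself of size $(|k|-\kappa_0)^{N_1}$, so identifying even the leading constant requires the first-order location of the phase velocity, $\nu_*(k)=\Upsilon+\tfrac{|k|}{2}-\tfrac{2\kappa_0-\widetilde\kappa_0'(\kappa_0)}{\widetilde\kappa_1(\kappa_0)}(|k|-\kappa_0)+\cO((|k|-\kappa_0)^2)$, the sign $2\kappa_0-\widetilde\kappa_0'(\kappa_0)>0$, and the a priori bound $|\Im z_+(k)|\lesssim(|k|-\kappa_0)^{N_1-1}$; in the paper these come from the corner expansion \eqref{disp1-cpt} with the coefficients \eqref{def-tildekappaj} and the estimate \eqref{eq:ReIm}, and your outline only gestures at a ``Taylor expansion'' without producing them. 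The analogous issue arises for $\Upsilon=\infty$: the damping is $\cO(|k|^{N_1-1})$, while the smoothness of $D$ through $k=0$ and the a priori bound $|\Re\lambda_\pm(k)|\lesssim|k|^{2\lfloor(N_1-1)/2\rfloor-1}$ are exactly what the moment expansion \eqref{disp1}, carried to order $m\sim N_1$ with an induction, provides; your invoked cancellation $\hw(k)|k|^{-1}[\cH(z_+)-\cH(z_-)]=\cO(1)$ is that expansion at leading order only. Two smaller points: at the threshold the numerator of $\partial_\tau D$ behaves like $(\Upsilon-u)^{N_1}$, not $(\Upsilon-u)^{N_1+1}$, since $\tfrac{\tau}{2|k|}-u\to\tfrac{|k|}{2}\neq0$ at $u=\Upsilon$ (your integrability conclusion for $N_1\ge2$ still holds); and strict negativity of $\Re\lambda_\pm(k)$ for $|k|>\kappa_0$ needs the absence of imaginary zeros from Theorem \ref{theo-LangmuirE} and Proposition \ref{prop:nonexistence}, not only Proposition \ref{prop-nogrowth}.
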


\begin{proof}
Recall from \eqref{def-Dlambda1} the dispersion relation 
\begin{equation}\label{reexp-DR}
	\frac{2|k|}{\hat{w}(k)} =  \cH \left( -\frac{i\lambda +|k|^2}{2|k|} \right) - \cH \left(- \frac{i \lambda -|k|^2}{2|k|} \right)
\end{equation}
where $\cH(z)$ is defined as in \eqref{def-cH}, which is analytic in $\Im z < 0$. 
In view of Proposition \ref{prop-nogrowth} and Theorem \ref{theo-LangmuirE}, the dispersion relation \eqref{reexp-DR} has no solutions satisfying $\Re \lambda \ge 0$ for $0< |k| -\kappa_{0} \ll 1$. 

\subsubsection*{Case 1: $\Upsilon=\infty$}

When $\Upsilon = \infty$, we have $\kappa_0 =0$, and so we consider the case when $|k|\ll 1$. Using the geometric series of $\frac{1}{1-y}$, we get
\[
\frac{1}{z-u \pm \frac{|k|}{2}} = \frac{1}{z}\frac{1}{1- \frac{u \mp \frac{|k|}{2}}{z}} 
= \frac{1}{z}\sum_{j=0}^{2m}  \frac{(u \mp \frac{|k|}{2})^j}{z^j}  + \frac{(u \mp \frac{|k|}{2})^{2m+1}}{z^{2m+1}(z-u \pm \frac{|k|}{2})}
\]
for any $m \ge 0$. Then
\begin{align*}
	&\frac{1}{z-u - \frac{|k|}{2}} - \frac{1}{z-u +\frac{|k|}{2}} \\
	&= \sum_{j=0}^{2m} \frac{1}{z^{j+1}} \left[\left(u +\frac{|k|}{2}\right)^j - \left(u - \frac{|k|}{2}
	\right)^j \right]
		+ \frac{1}{z^{2m+1}} \left[ \frac{(u+\frac{|k|}{2})^{2m+1}}{z-u-\frac{|k|}{2}} - \frac{(u-\frac{|k|}{2})^{2m+1}}{z-u+\frac{|k|}{2}}\right] \\
	&= \sum_{j=0}^{2m} \frac{1}{z^{j+1}} \sum_{l=0}^{j} (1-(-1)^{j-l}) \binom{j}{l} u^{l} \frac{|k|^{j-l}}{2^{j-l}}
	+ \frac{1}{z^{2m+1}} \left[ \frac{(u+\frac{|k|}{2})^{2m+1}}{z-u-\frac{|k|}{2}} - \frac{(u-\frac{|k|}{2})^{2m+1}}{z-u+\frac{|k|}{2}}\right].
\end{align*}
Using the fact that $\varphi(u)$ is even in $u$, we get 
\begin{equation}
\begin{aligned}\label{exp-cH}
\int_{\RR} \left[  \frac{\varphi(u)}{z- u - \frac{|k|}{2}} - \frac{\varphi(u)}{z - u + \frac{|k|}{2}} \right] \, du
&= \sum_{j=0}^{m-1} \frac{1}{z^{2j+2}} \sum_{l=0}^{j} 2 \binom{2j+1}{2l} \frac{|k|^{2j-2l+1}}{2^{2j-2l+1}} \int_{\mathbb{R}} u^{2l} \varphi(u) \, du \\
& \qquad	+ \frac{1}{z^{2m+1}} \int_{\mathbb{R}} \left[ \frac{(u+\frac{|k|}{2})^{2m+1}}{z-u-\frac{|k|}{2}} - \frac{(u-\frac{|k|}{2})^{2m+1}}{z-u+\frac{|k|}{2}}\right] \varphi(u)\, du
\end{aligned}
\end{equation}
for $0\le m < \frac{N_1}{2}$, where $N_1$ is defined as in \eqref{decay-varphi}. 
For $z = -i \lambda/ (2|k|)$, the dispersion relation \eqref{reexp-DR} becomes
\begin{equation}
\begin{aligned}\label{disp1}
\lambda^2 
=  \sum_{j=0}^{m-1} (-1)^{j+1} \frac{|k|^{2j}}{\lambda^{2j}} \widetilde{\tau}_{j} (k) + (-1)^m \frac{|k|^{2m-2}}{\lambda^{2m-1}}\mathcal{R}_{1}(\lambda, k) 
\end{aligned}
\end{equation}
where
\begin{equation}
	\widetilde{\tau}_{j}(k):=  \sum_{l=0}^{j} 2^{2l+1} |k|^{2j-2l} \binom{2j+1}{2l} \int_{\RR} u^{2l} \varphi(u)\, du ,
\end{equation}
and
\begin{equation}\label{eq:remainder0}
 \mathcal{R}_{1}(\lambda, k)
 :={2^{2m}} i\int_{\RR} \left[ \frac{(u+\frac{|k|}{2})^{2m+1}}{-\frac{i\lambda}{2|k|}-u-\frac{|k|}{2}} - \frac{(u-\frac{|k|}{2})^{2m+1}}{-\frac{i\lambda}{2|k|}-u+\frac{|k|}{2}}\right] \varphi(u)\, du.
\end{equation}
for $\Re \lambda >0$. Note that $\widetilde{\tau}_{j}(k)$ is positive for all $j$ and $k$. Recall that the solutions do not exist for $\Re \lambda >0$, see Proposition \ref{prop-nogrowth} and Theorem \ref{theo-LangmuirE}. We use the classical Whitney's extension theorem \cite{Whi} to extend $\mathcal{R}_{1}(\lambda, k)$ to small neighborhoods of $\lambda = \pm i \tau_{0}$, with $\tau_0 = \tau_{*}(0)$, as a sufficiently smooth function in $\lambda$, viewing $\lambda$ as an element in $\mathbb{R}^2$. 
Following \eqref{bd-cH1}, we get
$|\mathcal{R}_{1}(\lambda, k)| \lesssim |k|$. The implicit function theorem implies the existence of the zeros $\lambda_\pm(k)$ of dispersion relation \eqref{disp1} for $|k|\ll 1$, noting that for $\lambda = i\tau$, we have 
\[
\sum_{j=0}^{{m-1}} (-1)^{j+1} \frac{|k|^{2j}}{\lambda^{2j}} \widetilde{\tau}_{j} (k) = -\tau_{0}^2 + \mathcal{O}(|k|^2)
\]
This yields $\lambda_\pm(k) = \pm i\tau_0 + \cO(|k|^2)$ for $|k|\ll1$. By induction, 
we obtain 
\begin{equation}\label{exp-lambdapm} 
\lambda_\pm(k) = \pm i \sum_{j=0}^{m-1} a_{j} |k|^{2j} + \mathcal{O}(|k|^{2m-1})
\end{equation}
for $|k|\ll 1$, where $a_{j}$ are some nonnegative coefficients. These can be computed using $\widetilde{\tau}_{j}(k)$, for instance, $a_{0} = \tau_{0}$, {$a_{1} = \frac{12}{ \tau_{0}^3} \int u^2 \varphi(u) \, du$.} In particular, taking {$m = \lfloor \frac{N_1-1}{2} \rfloor$}, we have 
\begin{equation}\label{upper-Relambda}
|\Re \lambda_\pm(k)| \lesssim {|k|^{2\lfloor \frac{N_{1}-1}{2} \rfloor - 1}}
\end{equation}
for $|k|\ll 1$. Let us study further the real part of the dispersion relation $\lambda_\pm(k)$. Taking $m=1$ in \eqref{disp1}, we obtain that the curves $\lambda_\pm(k)$ solve
\[
	\lambda^2 = - \tau_0^2 - \frac{1}{\lambda} \mathcal{R}_{1}(\lambda, k)
\]
where
\[
	\mathcal{R}_{1}(\lambda,k) =  {4i} \int_{\RR} \left[ \frac{(u+\frac{|k|}{2})^{3}}{-\frac{i\lambda}{2|k|}-u-\frac{|k|}{2}} - \frac{(u-\frac{|k|}{2})^{3}}{-\frac{i\lambda}{2|k|}-u+\frac{|k|}{2}}\right] \varphi(u)\, du.
\]
for $\Re \lambda >0$.
Here we have omitted $\pm$ for simplicity. 
Write $\lambda = \gamma + i\tau$. Note that $\gamma < 0$. Taking the imaginary part yields
\[
2 \gamma \tau 
=  -  \frac{\gamma}{|\lambda|^2} \Im \mathcal{R}_{1}(\lambda, k) 
 +\frac{\tau}{|\lambda|^2} \Re \mathcal{R}_{1}(\lambda, k) 
\]
which implies 
\[
\left[ 2\tau + \frac{1}{|\lambda|^2} \Im \mathcal{R}_{1}(\lambda, k) \right] \gamma
  = \frac{\tau}{|\lambda|^2} \Re \mathcal{R}_{1}(\lambda, k)
\]
Note that we have $|\mathcal{R}_{1}(\lambda, k)| \lesssim 1$, $\tau = \pm (\tau_0 + \cO(|k|^2))$ and {$\gamma = \cO(|k|^{2\lfloor \frac{N_{1}-1}{2} \rfloor - 1})$.} Then we get
\begin{equation}
\begin{aligned}\label{est-gamma0}
	 \left( 2\tau_{0} + \mathcal{O}(|k|) \right) \gamma 
	 =   \left(\frac{1}{\tau_{0}} + \mathcal{O}(|k|) \right) \Re \mathcal{R}_{1}(\lambda, k).
\end{aligned}
\end{equation}
We use Plemelj's analysis to compute $\Re \mathcal{R}_{1} (\lambda, k)$. Using the continuity of $\mathcal{R}_{1}(\lambda, k)$, we may evaluate it using the representation in $\{ \Re \lambda > 0 \}$. For $\Re \lambda >0$, we have
\[
	\Re \mathcal{R}_{1}(\lambda, k) = - {4} \Im \int_{\RR} \left[ \frac{(u+\frac{|k|}{2})^{3}}{-\frac{i\lambda}{2|k|}-u-\frac{|k|}{2}} - \frac{(u-\frac{|k|}{2})^{3}}{-\frac{i\lambda}{2|k|}-u+\frac{|k|}{2}}\right] \varphi(u)\, du.
\]
Write $\widetilde{\gamma} = \gamma/(2|k|)$ and $\widetilde{\tau} = \tau/(2|k|)$. Since {$\gamma = \mathcal{O}(|k|^{2\lfloor \frac{N_{1}-1}{2} \rfloor - 1})$}, we obtain that $\widetilde{\gamma}$ approaches to $0^+$ as $|k|\rightarrow 0$. Therefore, we can compute $\Re \mathcal{R}_{1}(\lambda, k)$ following the Plemelj's analysis.
We first write 
\[
\begin{aligned}
	\Im \int_{\RR} \frac{(u \pm \frac{|k|}{2})^{3} \varphi(u)}{- i \tilde{\gamma} + \tilde{\tau} -u \mp \frac{|k|}{2}}   \, du 
	= \widetilde{\gamma} \int_{\RR} \frac{(u \pm \frac{|k|}{2})^{3} \varphi(u)}{\tilde{\gamma}^2 + ( \tilde{\tau} \mp \frac{|k|}{2} - u)^2} \, du 
	= \widetilde{\gamma}  \int_{\RR} \frac{ u^3 \varphi(u \mp \frac{|k|}{2})}{\tilde{\gamma}^2  + ( \tilde{\tau}  - u)^2} \, du.
\end{aligned}
\]
Using $\int_{\RR} a(a^2+x^2)^{-1} dx =\pi$ for $a>0$, we write
\[
	\widetilde{\gamma}  \int_{\RR} \frac{ u^3 \varphi(u \mp \frac{|k|}{2})}{\widetilde{\gamma}^2  + ( \widetilde{\tau}  - u)^2} \, du
	 =  \pi \widetilde{\tau}^3 \varphi \left( \widetilde{\tau} \mp \frac{|k|}{2}  \right) + \int_{\RR} \frac{\widetilde{\gamma}}{\widetilde{\gamma}^2  + ( u - \widetilde{\tau})^2} \left[ u^3 \varphi\left( u \mp \frac{|k|}{2} \right) - \widetilde{\tau}^3 \varphi \left( \widetilde{\tau} \mp \frac{|k|}{2} \right) \right] \, du .
\]
When $|u|\le \frac{\tau_0}{4|k|}$, note that $|u - \widetilde{\tau}|\ge \frac{\tau_0}{4|k|}$ for sufficiently small $|k|$. Since $u^3 \varphi(u \mp |k|/2)$ is uniformly bounded for $u$ and $k$, 
\[
	\widetilde{\gamma} \left| \int_{ \{ |u| \le \frac{\tau_{0}}{4|k|} \}} \frac{u^3 \varphi(u \mp \frac{|k|}{2}) - \widetilde{\tau}^3 \varphi(\widetilde{\tau} \mp \frac{|k|}{2})}{\widetilde{\gamma}^2 + ( u - \widetilde{\tau})^2} \, du\right|
	\le \frac{|\gamma|}{2|k|} \frac{16|k|^2}{\tau_{0}^2} C = |\gamma| \mathcal{O}(|k|).
\]
For the integral over $|u|\ge \frac{\tau_0}{4|k|}$, we obtain that
\[
\begin{aligned}
\widetilde{\gamma}   \int_{ \{ |u| \ge \frac{\tau_{0}}{4|k|} \}} \frac{u^3 \varphi(u \mp \frac{|k|}{2}) - \widetilde{\tau}^3 \varphi(\widetilde{\tau} \mp \frac{|k|}{2})}{\widetilde{\gamma}^2 + ( u - \widetilde{\tau})^2} \, du
= |\gamma| \mathcal{O}(|k|)
\end{aligned}
\]
from the decay of $\varphi$ and its derivatives. To sum up,
\[
	\Re \mathcal{R}_{1}(\lambda, k) = -{\frac{\pi\tau^3}{2|k|^3}} \left[ \varphi \left(\frac{\tau}{2|k|} - \frac{|k|}{2} \right) -\varphi \left(\frac{\tau}{2|k|} + \frac{|k|}{2} \right) \right] + |\gamma| \mathcal{O}(|k|)
\]
and $|\gamma| \mathcal{O}(|k|)$ can be put on the left hand side of \eqref{est-gamma0}.
Therefore we get from \eqref{est-gamma0} that
\[
	\left( 2\tau_{0} + \mathcal{O}(|k|) \right) \gamma 
	= -{\frac{\pi\tau^3}{2|k|^3}} \left[ \varphi \left(\frac{\tau}{2|k|} - \frac{|k|}{2} \right) -\varphi \left(\frac{\tau}{2|k|} + \frac{|k|}{2} \right) \right] \left( \frac{1}{\tau_{0}} + \mathcal{O}(|k|) \right)
\]
For simplicity, we consider the case when $\tau >0$. Since $\tau = \tau_{0} + \mathcal{O}(|k|^2)$, we have
\begin{align*}
	\left( \tau_{0} + \mathcal{O}(|k|) \right) \gamma 
	&=  -{\frac{\pi\tau^2}{4|k|^3}}
	\left[ \varphi \left( \frac{\tau}{2|k|} - \frac{|k|}{2} \right) - \varphi \left( \frac{\tau}{2|k|} + \frac{|k|}{2} \right)  \right] \left(1 + \mathcal{O}(|k|) \right) \\
	&=  {\frac{\pi\tau^2}{4|k|^2}}\varphi' \left( \frac{\tau}{2|k|} \right) \left(1 + \mathcal{O}(|k|) \right).
\end{align*}
This proves \eqref{Landau}, noting that $\tau = \tau_0 + \cO(|k|^2)$ and $\nu_* = \tau_*/(2|k|)$. The case when $\tau <0$ follows similarly.

\subsubsection*{Case 2: $\Upsilon<\infty$}

Consider the case when $\Upsilon < \infty$. Recall that $\kappa_0>0$ and $\tau_*(\kappa_0) = 2 \kappa_{0} \Upsilon + \kappa_{0}^2$. We study the dispersion relation $D(\lambda,k)=0$ for $|k| \to \kappa_0^{+}$ and $\lambda \to \pm i \tau_*(\kappa_0)$. For simplicity, we focus on the case when $\lambda \sim + i\tau_*(\kappa_0)$. The other case can be done similarly. Observe that $z_{+}(k)= - i \lambda_{+}(k) / (2|k|) \sim \Upsilon + \kappa_{0}/2$. We use the geometric series of $\frac{1}{1-y}$ as in the previous case to write
\begin{align*}
	&\frac{1}{z-u-\frac{|k|}{2}} -\frac{1}{z-u{+}\frac{|k|}{2}}
	\\&= \frac{1}{\Upsilon -u} \frac{1}{1 - \frac{\Upsilon + \frac{|k|}{2} - z}{\Upsilon - u}} - \frac{1}{\Upsilon + |k|- u} \frac{1}{1 - \frac{\Upsilon + \frac{|k|}{2} -z}{\Upsilon +|k| - u}} \\[3pt]
	&= \frac{1}{\Upsilon - u} \sum_{j=0}^{{m}} \frac{(\Upsilon + \frac{|k|}{2} -z )^{j}}{(\Upsilon-u)^{j}} + \frac{(\Upsilon+ \frac{|k|}{2} - z)^{m+1}}{(\Upsilon-u)^{m+1} (z- \frac{|k|}{2} - u)} \\[3pt]
	& \hspace{1in}	-\frac{1}{\Upsilon +|k|- u} \sum_{j=0}^{{m}} \frac{(\Upsilon + \frac{|k|}{2} -z )^{j}}{(\Upsilon +|k| -u)^{j}} + \frac{(\Upsilon+ \frac{|k|}{2} - z)^{m+1}}{(\Upsilon+|k|-u)^{m+1} (z+ \frac{|k|}{2} - u)} \\
	&= \sum_{j=0}^{{m}} \frac{(\Upsilon + \frac{|k|}{2} - z)^{j}}{(\Upsilon-u)^{j+1} (\Upsilon +|k|- u)^{j+1}} \sum_{l=1}^{j+1} \binom{j+1}{l} (\Upsilon-u)^{j+1 - l} |k|^{l} \\[3pt]
	& \hspace{1in} + \frac{(\Upsilon+ \frac{|k|}{2} - z)^{m+1}}{(\Upsilon-u)^{m+1} (z- \frac{|k|}{2} - u)} + \frac{(\Upsilon+ \frac{|k|}{2} - z)^{m+1}}{(\Upsilon+|k|-u)^{m+1} (z+ \frac{|k|}{2} - u)}.
\end{align*}
Then we get
\begin{equation}
\begin{aligned}\label{exp-cH2}
	&\int_{\RR} \left[  \frac{\varphi(u)}{z- u - \frac{|k|}{2}} - \frac{\varphi(u)}{z - u + \frac{|k|}{2}} \right] \, du \\
	&= \sum_{j=0}^{m} (\Upsilon + \frac{|k|}{2} - z)^{j} \sum_{l=1}^{j+1} \binom{j+1}{l} |k|^{l} \int_{\RR} \frac{\varphi(u)}{(\Upsilon-u)^{l} (\Upsilon +|k| - u)^{j+1}}  \, du \\
	& \quad + ( \Upsilon+ \frac{|k|}{2} - z)^{m+1} \int_{\RR} \left[ \frac{\varphi(u)}{(\Upsilon - u)^{m+1} (z - \frac{|k|}{2} - u)} - \frac{\varphi(u)}{(\Upsilon + |k| -u)^{m+1}(z + \frac{|k|}{2} - u)} \right] \, du
\end{aligned}
\end{equation}
for $0\le m < N_1$, where $N_1$ is defined as in \eqref{decay-varphi2}. The dispersion relation \eqref{reexp-DR} becomes
\begin{equation}
\begin{aligned}\label{disp1-cpt}
	 |k|^2  
	 = \sum_{j=0}^{m} (\Upsilon + \frac{|k|}{2} - z)^{j} \widetilde{\kappa}_{j} (k) 
	   + ( \Upsilon+ \frac{|k|}{2} - z)^{m+1} \mathcal{R}_{2}(z,k)
\end{aligned}
\end{equation}
where
\begin{equation}\label{def-tildekappaj}
\widetilde{\kappa}_{j} (k) 
:= {\frac{1}{2}} \sum_{l=1}^{j+1} \binom{j+1}{l}  |k|^{l-1} \int_{\RR} \frac{\varphi(u)}{(\Upsilon-u)^{l} (\Upsilon +|k| - u)^{j+1}}  \, du 
\end{equation}
and
\[
\mathcal{R}_{2}(z, k)
:=
{\frac{1}{2|k|}}\int_{\RR} \left[ \frac{\varphi(u)}{(\Upsilon - u)^{m+1} (z - \frac{|k|}{2} - u)} - \frac{\varphi(u)}{(\Upsilon + |k| -u)^{m+1}(z + \frac{|k|}{2} - u)} \right] \, du
\]
for $\Im z <0$. Note that $\widetilde{\kappa}_{j}(k)$ is sufficiently smooth and positive for all $j$ and $k$. The dispersion relation does not have zeros for $\Im z \le 0$ and $0 < |k| -\kappa_{0} \ll 1$, so we need to extend $\mathcal{R}_{2}(z, k)$ smoothly to $\Im z > 0$, noting that $z= - i\lambda / (2|k|)$. We would like to continue $\mathcal{R}_{2}(z,k)$ as a {sufficiently smooth} function in $z$, where we view $z$ as an element in $\mathbb{R}^2$, past the slit $(-\Upsilon - \frac{|k|}{2}, \Upsilon + \frac{|k|}{2})$.
Following \eqref{bd-cH1}, we get $|\mathcal{R}_{2}(z,k)| \lesssim 1$. The implicit function theorem implies the existence of $z_{+}(k)$ satisfying the dispersion relation \eqref{disp1-cpt} for $0 < |k| - \kappa_{0} \ll 1$, noting that $\widetilde{\kappa}_{j} (k)$ are positive for all $j$.
Also note that $z_{+}(k) = - i \lambda_{+}(k)/(2|k|)$. When $|k| > \kappa_{0}$, the zeros of the dispersion relation do not exist for $\Re \lambda \ge 0$ so that $\Im z_{+}(k) >0$. Moreover, we claim that
\begin{equation}
\begin{aligned}\label{eq:ReIm}
	\left| 
		\Re z_{+}(k) - \Upsilon - \frac{|k|}{2} 
		-\frac{2\kappa_{0} - \widetilde{\kappa}_{0}' (\kappa_{0})}{\widetilde{\kappa}_{1}(\kappa_{0})}(|k| - \kappa_{0})
	\right| &\lesssim (|k| -\kappa_{0})^2 , \\
	|\Im z_{+}(k) | &\lesssim (|k| - \kappa_{0})^{N_{1} -1}.
\end{aligned}
\end{equation}
In fact, the estimate of $\Im z_{+}(k)$ can be obtained from \eqref{disp1-cpt} as in the previous case. On the other hand, one can evaluate from \eqref{disp1-cpt} that $\widetilde{z}_{+}(k) := \Upsilon + \frac{|k|}{2} - z_{+}(k)$ satisfies $\widetilde{z}_{+}(\kappa_{0}) =0 $ and $2\kappa_{0} = \widetilde{\kappa}_{0}' (\kappa_{0}) + \widetilde{z}_{+}' (\kappa_{0}) \widetilde{\kappa}_{1} (\kappa_{0})$.
This implies that $\widetilde{z}_{+}'(\kappa_{0})$ is real. Using the Taylor's theorem, we get \eqref{eq:ReIm}. Note that
	\[
		\widetilde{\kappa}_{0}'(\kappa_{0}) = - \frac{1}{2} \int \frac{\varphi(u)}{(\Upsilon - u)(\Upsilon + \kappa_{0} - u)^2} \, du <0
	\]
so that
	\[
		\frac{2\kappa_{0} - \widetilde{\kappa}_{0}' (\kappa_{0})}{\widetilde{\kappa}_{1}(\kappa_{0})} >0.
	\]
Let us study further the real part of the dispersion relation $\lambda_{+}(k) = 2i z_{+}(k) |k|$. Taking $m=1$, the dispersion relation \eqref{disp1-cpt} becomes 
\begin{equation}\label{eq:disp-cpt}
	|k|^2 
	= \widetilde{\kappa}_{0}(k) 
		+ (\Upsilon + \frac{|k|}{2} -z ) \widetilde{\kappa}_{1}(k) 
		+ (\Upsilon + \frac{|k|}{2} -z )^2 \mathcal{R}_{2}(z,k).
\end{equation}

Write $z = \tau -i \gamma$ with $\gamma <0$. Taking the imaginary part of \eqref{eq:disp-cpt}, we get
\[
	0= \gamma \widetilde{\kappa}_{1}(k)
	+ \left[ (\Upsilon + \frac{|k|}{2} - \tau)^2 - \gamma^2 \right] \Im \mathcal{R}_{2}(z,k)
	+ 2 \gamma(\Upsilon + \frac{|k|}{2} - \tau) \Re \mathcal{R}_{2} (z,k),
\]
which implies
\[
	- \left[ \widetilde{\kappa}_{1}(k) + 2 (\Upsilon + \frac{|k|}{2} - \tau) \Re \mathcal{R}_{2} (z,k) \right] \gamma
	=  \left[ (\Upsilon + \frac{|k|}{2} - \tau)^2 - \gamma^2 \right] \Im \mathcal{R}_{2}(z,k).
\]
Recall that as $|k| \rightarrow \kappa_{0}^{+}$, we have $\tau \rightarrow \Upsilon + \frac{|k|}{2}$ and $\gamma = \mathcal{O}((|k| - \kappa_{0})^{N_{1} -1})$. Hence $\gamma^2 \Im \mathcal{R}_{2}(z,k)$ can be absorbed into the left hand side. Also $\widetilde{\kappa}_{1}'(k)$ is continuous. Hence for $0 < |k| -\kappa_{0} \ll 1$,
\begin{equation}\label{est-gamma0-cpt}
	- \left[ \widetilde{\kappa}_{1}(\kappa_{0}) + \mathcal{O}((|k| - \kappa_{0})) \right] \gamma
	=  (\Upsilon + \frac{|k|}{2} - \tau)^2 \Im \mathcal{R}_{2}(z,k).
\end{equation}

Let us study the term on the right hand side. Using the continuity of $\mathcal{R}_{2}(\lambda, k)$, we may evaluate it using the representation in $\{ \Im z < 0 \}$. We have
\[
\mathcal{R}_{2}(z,k) = {\frac{1}{2|k|}} \int_{\RR} \left[ \frac{\varphi(u)}{(\Upsilon - u)^{2} (z - \frac{|k|}{2} - u)} - \frac{\varphi(u)}{(\Upsilon + |k| -u)^{2}(z + \frac{|k|}{2} - u)} \right] \, du.
\]
for $\Im z < 0$. Write $z = \tau- i \gamma$ with $\gamma >0$. Observe that
\begin{multline*}
	\Im \int_{\RR} \left[ \frac{\varphi(u)}{(\Upsilon - u)^{2} (z - \frac{|k|}{2} - u)} - \frac{\varphi(u)}{(\Upsilon + |k| -u)^{2}(z + \frac{|k|}{2} - u)} \right] \, du \\
	= \int_{\RR} \frac{\gamma}{\gamma^2 + (\tau- \frac{|k|}{2} - u)^2} \frac{\varphi(u)}{(\Upsilon-u)^2} \, du 
		-\int_{\RR} \frac{\gamma}{\gamma^2 + (\tau + \frac{|k|}{2} - u)^2} \frac{\varphi(u)}{(\Upsilon +|k| -u)^2} \, du 
\end{multline*}
As in the previous case, we apply Plemelj's analysis to compute this term. Note that $\gamma \lesssim \mathcal{O}((|k| - \kappa_{0})^{N_{1} -1})$, and $\varphi$ and its derivatives decay rapidly. Since $\int_{\RR} a(a^2+x^2)^{-1} dx =\pi$ for $a>0$, we get that $\frac{\gamma}{\gamma^2 + ( \tau \pm \frac{|k|}{2} - u)^2}$ can be approximated by the Dirac delta function at $u= \tau \pm \frac{|k|}{2}$. Therefore
\begin{multline*}
	\Im \int_{\RR} \left[ \frac{\varphi(u)}{(\Upsilon - u)^{2} (z - \frac{|k|}{2} - u)} - \frac{\varphi(u)}{(\Upsilon + |k| -u)^{2}(z + \frac{|k|}{2} - u)} \right] \, du\\
	= \frac{\pi}{(\Upsilon +\frac{|k|}{2} - \tau)^2} \left[ \varphi(\tau - \frac{|k|}{2}) - \varphi(\tau + \frac{|k|}{2}) \right] + |\gamma| \mathcal{O}(|k|-\kappa_{0}),
\end{multline*}
which yields
\[
	(\Upsilon + \frac{|k|}{2} - \tau)^2 \Im \mathcal{R}_{2}(z,k) = {\frac{\pi}{2|k|}} \left[ \varphi(\tau - \frac{|k|}{2}) - \varphi(\tau + \frac{|k|}{2}) \right] + |\gamma| \mathcal{O}(|k|-\kappa_{0})
\]
Putting this into \eqref{est-gamma0-cpt}, we obtain that
\[
	- \left[ \widetilde{\kappa}_{1}(\kappa_{0}) + \mathcal{O}((|k| - \kappa_{0})) \right] \gamma
	=  {\frac{\pi}{2|k|}}\left[ \varphi(\tau - \frac{|k|}{2}) - \varphi(\tau + \frac{|k|}{2}) \right] + \mathcal{O}(|k| - \kappa_{0})
\]
Using \eqref{eq:ReIm}, we get \eqref{Landau-cpt}, noting that $\lambda_{+}(k) = 2i z_{+}(k) |k|$ and $\nu_* = \tau$. This completes the proof of Theorem \ref{theo-Landau}. 
\end{proof}

\subsection{Proof of Theorem \ref{theo-mainLandau} }

Finally, collecting the results established in the previous sections, Theorem \ref{theo-LangmuirE} and Theorem \ref{theo-Landau}, gives Theorem  \ref{theo-mainLandau}, while the lower bounds \eqref{Penrose-intro} follow from a combination of Propositions \ref{prop-nogrowth} and \ref{prop:nonexistence}.   

\section{Green function}
\subsection{Green function in Fourier space}
In this section, we bound the Green function in Fourier space. Recall from \eqref{resolvent} that
\[
	\Trho(\lambda,k) = \widetilde{G}(\lambda,k) \Trho^0(\lambda,k)
\]
where
\[
	\widetilde{G}(\lambda,k) := \frac{1}{D(\lambda,k)}.
\]
We define the Green function in the Fourier space via the inverse Laplace transform
\[
	\widehat{G}_{k}(t) = \frac{1}{2\pi i} \int_{\{ \Re \lambda = \gamma_{0} \}} e^{\lambda t } \widetilde{G}(\lambda,k) \, d \lambda
\]
where $\gamma_{0}$ is a sufficiently large positive constant so that it is well defined. We then get
\[
	\widehat{\rho}_{k}(t) = \int_{0}^{t} \widehat{G}_{k}(t-s) \widehat{\rho}_{k}^{0}(s) \, ds.
\]
In this section, we estimate $\widehat{G}_{k}(t)$. From \eqref{eq:dlambdak}, we obtain for $\Re \lambda >0$ that
\begin{equation}\label{eq:displap}
\begin{aligned}
	D(\lambda, k) 
	= 1 + 2 \widehat{w}(k) \int_{0}^{\infty} e^{-\lambda t} \sin (t|k|^2)\widehat{\varphi}(2t|k|) \, dt 
	=: 1 + \widehat{w}(k) m_{f}(\lambda, k).
\end{aligned}
\end{equation}
   From the decay of $\varphi$, the dispersion relation $D(\lambda, k)$ is continuous up to the imaginary axis, that is, the limit of  $D(\gamma + i \tau, k)$ as $\gamma \rightarrow 0^{+}$ is well-defined for all $\tau \in \RR$. 
Also we can write
\[
	\widetilde{G}(\lambda,k) = 1 - \frac{\widehat{w}(k) m_{f}(\lambda, k)}{1 + \widehat{w}(k) m_{f}(\lambda, k)}.
\]

We first obtain the following. 

\begin{proposition}\label{prop:stable} 
Let $\kappa_0$ be defined as in \eqref{def-k0-rel}, and let $\delta_0$ be any fixed small constant. For $|k| \ge \kappa_{0} + \delta_0$, we can write
	\[
		\widehat{G}_{k}(t) = \delta(t) + \widehat{G}_{k}^{r}(t),
	\]
	where $\delta(t)$ is the Dirac delta function, and 
	\begin{equation}\label{eq:Gr-highfreq}
		|\widehat{G}_{k}^{r}(t)| \le  C_{N_{0}}|k|^{-1} \langle kt \rangle^{-N_{0}+3}
	\end{equation}
	for all $t\ge 0$ and for some positive constant $C_{N_{0}}$ depending only on $N_{0}$.
\end{proposition}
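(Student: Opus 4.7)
The plan is to invert the Laplace transform on the imaginary axis and then obtain the stated decay by repeated integration by parts in the dual frequency. Writing
$$\widetilde{G}(\lambda,k)-1 = -\frac{\hat w(k)\,m_f(\lambda,k)}{D(\lambda,k)} =: \widetilde{G}^r(\lambda,k),$$
and interpreting the inverse Laplace transform of the constant $1$ as the Dirac mass $\delta(t)$, I set $\widehat{G}_k^r$ to be the inverse Laplace transform of $\widetilde{G}^r$, so that $\widehat{G}_k = \delta(t) + \widehat{G}_k^r(t)$. By the stability estimate \eqref{Penrose-intro} we have $|D(\lambda,k)|\ge c_{\delta_0}$ uniformly on $\{\Re\lambda\ge 0\}$ for $|k|\ge \kappa_0+\delta_0$, and $\widetilde{G}^r(\lambda,k) = O(|\lambda|^{-1})$ at infinity (from one integration by parts in $s$ in \eqref{eq:displap}), so I may shift the Bromwich contour to $\Re\lambda=0$, obtaining
$$\widehat{G}_k^r(t) = -\frac{\hat w(k)}{2\pi}\int_{\RR} e^{i\tau t}\frac{m_f(i\tau,k)}{D(i\tau,k)}\,d\tau =: -\frac{\hat w(k)}{2\pi}\int_\RR e^{i\tau t}F_k(\tau)\,d\tau.$$

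Next I would establish two sets of bounds on $F_k$. From \eqref{eq:displap} and the decay $|\widehat\varphi^{(j)}(t)|\lesssim\langle t\rangle^{-N_0}$, a change of variable $u=k s$ yields the pointwise estimates $|\partial_\tau^n m_f(i\tau,k)|\lesssim |k|^{-n-1}$ for $n\le N_0-2$; combined with \eqref{def-Dlambda1}, \eqref{unibd-cH}, the lower bound $|D|\ge c_{\delta_0}$, and Faà di Bruno, this should yield $|\partial_\tau^n F_k(\tau)|\lesssim |k|^{-n-1}$ pointwise. To furnish integrability in $\tau$, I plan to iterate integration by parts in $s$ inside the formula for $m_f$: since $\sin(s|k|^2)\widehat\varphi(2s|k|)$ vanishes at $s=0$, after $m$ integrations by parts the leading boundary term is of order $|k|^{2m-1-n}/|\tau|^m$ for $|\tau|\gg|k|^2$, and the residual integral satisfies the same bound. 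Balancing the two regimes at the matching scale $|\tau|\sim|k|^2$ and choosing $m$ sufficiently large should then give
$$\|\partial_\tau^n F_k\|_{L^1_\tau}\lesssim |k|^{-n+1}, \qquad 0\le n\le N_0-3.$$

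Finally, for $|kt|\le 1$ the direct estimate reads $|\widehat{G}_k^r(t)|\le \frac{|\hat w(k)|}{2\pi}\|F_k\|_{L^1_\tau}\lesssim |k|^{-2}\cdot|k|\lesssim |k|^{-1}$, while for $|kt|\ge 1$ I would integrate by parts $n=N_0-3$ times in $\tau$ and apply the previous $L^1$ bound to conclude
$$|\widehat{G}_k^r(t)|\lesssim \frac{|\hat w(k)|\,\|\partial_\tau^n F_k\|_{L^1_\tau}}{t^n} \lesssim \frac{1}{|k|\,(|k|t)^n} = \frac{1}{|k|}\langle kt\rangle^{-N_0+3},$$
as required. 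The delicate step will be the combined $L^1$-bound on $\partial_\tau^n F_k$: the pointwise estimate for $|\tau|\lesssim|k|^2$ carries no decay in $\tau$ and relies on the direct integral bound, while the tail estimate for $|\tau|\gtrsim|k|^2$ comes through integration by parts in $s$ and requires careful bookkeeping of the powers of $|k|$ produced by derivatives of $\sin(s|k|^2)$, balanced against those produced by Faà di Bruno when applying $\partial_\tau^n$ to $1/D$ in conjunction with the Penrose--Lindhard lower bound on $D$.
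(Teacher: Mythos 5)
Your proposal is correct and follows essentially the same route as the paper: split off the Dirac mass, invert the Laplace transform on the imaginary axis using the Penrose--Lindhard lower bound on $D$ for $|k|\ge\kappa_0+\delta_0$, control $\partial_\tau^n m_f$ (pointwise in $|k|$ and with decay in $\tau$) from the representation \eqref{eq:displap} and the decay of $\widehat\varphi$, and then integrate by parts $N_0-3$ times in $\tau$; your target norms $\|\partial_\tau^n F_k\|_{L^1_\tau}\lesssim |k|^{-n+1}$ reproduce exactly the paper's bookkeeping. The only (cosmetic) difference is how the $\tau$-decay of $m_f$ is produced: the paper applies the single operator $|k|^2-\partial_s^2$ to generate the weight $(|k|^2+\tau^2)^{-1}$ with an explicit boundary term $2|k|^2\widehat\varphi(0)$, whereas you iterate first-order integrations by parts in $s$ and match the regimes $|\tau|\lesssim|k|^2$ and $|\tau|\gtrsim|k|^2$, which closes the same estimates.
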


\begin{proof}
	Let
	\[
			\widetilde{G}^r_k(\lambda) = \frac{\widehat{w}(k) m_{f}(\lambda, k)}{1 + \widehat{w}(k) m_{f}(\lambda, k)}.
	\]
	We first estimate $m_{f}(\lambda, k)$. For $\Re \lambda \ge 0$, we have
	\begin{align}
		\label{eq:mfbdd}
		\begin{split}
			|m_{f}(\lambda, k)|
			&\le C_{0} \int_{0}^{\infty} \langle kt \rangle^{-N_{0}} \, dt \le C_{0} |k|^{-1}
		\end{split}
	\end{align}
	for some constant $C_{0}$ independent of $\lambda$ and $k$. On the other hand, integrating by parts yields that for $\lambda = i \tau$ with $\tau \neq |k|$,
	\begin{align*}
		(|k|^2 + \tau^2) m_{f}(i\tau, k)
		&= -2\int_{0}^{\infty}  (|k|^2 - \partial_{t}^2)(e^{-i \tau t}) \sin (t|k|^2) \widehat{\varphi}(2t|k|) dt \\
		&= -2  \int_{0}^{\infty} e^{-i \tau t} (|k|^2 - \partial_{t}^2) \left( \sin (t|k|^2) \widehat{\varphi}(2t|k|) \right) dt
		+ 2 |k|^2 \widehat{\varphi}(0).
	\end{align*}
	Here the boundary terms at infinity vanish since for all $j \ge 0$,
	\[
		\lim_{t \rightarrow \infty} \left| e^{-i \tau t } \partial_{t}^{j} \left( \sin (t|k|^2) \widehat{\varphi}(2t|k|) \right) \right| \lesssim \lim_{t \rightarrow \infty} \langle kt \rangle^{-N_{0}} =0.
	\]
	We also used
	\[
		\partial_{t}^{j} \left( \sin(t|k|^2) \widehat{\varphi}(2t|k|) \right)(0)
		=\begin{cases}
			0 & \textrm{if } j=0 \\
			|k|^2 \widehat{\varphi}(0) & \textrm{if } j=1.
		\end{cases}
	\]
	Hence for $\Re \lambda =0$,
	\begin{align}
		\label{eq:mf-verticalline}
		\begin{split}
			|m_{f}(\lambda, k)| 
			\le C_{0} \int_{0}^{\infty} \frac{t|k|^4 + |k|^3 }{  |k|^2 + |\Im \lambda|^2}  \langle kt \rangle^{-N_{0}} dt 
			+ \frac{C_{0} |k|^2 \widehat{\varphi}(0)}{ |k|^2 + |\Im \lambda|^2 } 
			\le \frac{C_{0} |k|^2 }{|k|^2 + |\Im \lambda|^2 }
		\end{split}
	\end{align}
	where $C_{0}$ is independent of $\lambda$ and $k$. One can repeat the same argument using $\partial^{n}_{\lambda} e^{-\lambda t} = (-t)^{n} e^{-\lambda t}$ to obtain the analogues of \eqref{eq:mfbdd} and \eqref{eq:mf-verticalline} for $\partial_{\lambda}^{n} m_{f}(\lambda, k)$ for $0 \le n< N_{0} -2$. Namely, 
	\[
		|\partial_{\lambda}^{n} m_{f}(\lambda, k)| \le C_{0} \int_{0}^{\infty} t^n \langle kt \rangle^{-N_{0}} \, dt \le C_{0} |k|^{-n -1}
	\]
	for $\Re \lambda \ge 0$ and
	\begin{align}
		\label{eq:mfderiv-verticalline}
		\begin{split}
			|\partial_{\lambda}^{n} m_{f}(\lambda, k)| 
			\le C_{0}\int_{0}^{\infty} \frac{t|k|^4 + |k|^3 }{  |k|^2 + |\Im \lambda|^2}  t^{n} \langle kt \rangle^{-N_{0}} dt 
			\le \frac{C_{0} |k|^{-n+2}}{|k|^2 + |\Im \lambda|^2 }
		\end{split}
	\end{align}
	for $\Re \lambda =0$ where $C_{0}$ is a constant independent of $\lambda$, $k$ and $N$.
	
	In view of \eqref{eq:mf-verticalline}, there exists a large constant $M$ such that
		\[
			|D(\lambda, k)| \ge \frac{1}{2}
		\]
	for all $\lambda$ satisfying $\Re \lambda = 0$ with $|\Im \lambda| \ge M$ or $|k| \ge M$. When $|\Im \lambda | \le M$ and $1 \lesssim |k| - \kappa_{0} \le M - \kappa_{0}$, we use the fact that for $|k| > \kappa_{0}$, the dispersion relation $D(\lambda, k)$ does not vanish on $\{ \Re \lambda \ge 0 \}$, see Proposition \ref{prop-nogrowth}, Sections \ref{sec-emlambda} and \ref{sec-oscmode}. 
	Hence there is a positive constant $c_{0}$ satisfying
		\[
			|D(\lambda, k)| \ge c_{0}.
		\]
	for $\Re \lambda \ge 0$, $|\Im \lambda | \le M$ and $1 \lesssim |k| - \kappa_{0} \le M - \kappa_{0}$. 
	Using \eqref{eq:mf-verticalline}, there exists a positive constant $C_{0}$, independent of $\lambda$ and $k$, so that 
	\begin{align}
		\label{eq:Gtilde-largek}
		|\widetilde{G}_{k}^{r} (\lambda)| \le \frac{C_{0}}{|k|^2 + |\Im \lambda|^2}
	\end{align}
	for $|k| - \kappa_{0} \gtrsim 1$ and $\Re \lambda = 0$. By the same argument, using \eqref{eq:mfderiv-verticalline}, there exists a positive constant $C_{N_{0}}$ independent of $\lambda$, $k$ and $N$ satisfying
	\begin{equation}
		\label{eq:Gtildederiv-largek}
		|\partial_{\lambda}^{n} \widetilde{G}_{k}^{r} (\lambda)| \le \left| \sum_{j=0}^{n} \binom{n}{j} \partial_{\lambda}^{j} (1-D(\lambda, k)) \partial_{\lambda}^{n-j} (D(\lambda, k)^{-1}) \right| \le \frac{C_{N_{0}} |k|^{-n}}{|k|^2 + |\Im \lambda|^2}
	\end{equation}
	for $0 \le N < N_{0} -2$, $|k| - \kappa_{0} \gtrsim 1$ and $\Re \lambda =0$.
	
	Now we take the inverse Laplace transform of $\widetilde{G}_{k}^{r}(\lambda)$. Note that $\widetilde{G}_{k}^{r}(\lambda)$ is analytic on $ \Re \lambda > 0 $ and continuous up to $ \Re \lambda =0$ from the right. Therefore, writing $\lambda = i \tau$, integrating by parts yields
	\begin{align*}
		\widehat{G}_{k}^{r}(t) 
		= \frac{1}{2\pi i } \int_{\{ \Re \lambda = 0 \}} e^{\lambda t} \widetilde{G}_{k}^{r}(\lambda) d \lambda = \frac{1}{2\pi} \int_{\RR} e^{i \tau t} \widetilde{G}_{k}^{r} (i \tau) d \tau 
		=\frac{(-1)^{n}}{2\pi t^{n}} \int_{\RR}  e^{i\tau t} \partial_{\lambda}^{N} \widetilde{G}_{k}^{r}(i \tau) d \tau
	\end{align*}
	for all $0 \le n < N_{0} -2$. Choose $n= N_{0} -3$. Using \eqref{eq:Gtildederiv-largek} and $\int_{\mathbb{R}} (x^2 +|k|^2)^{-1} dx = \pi |k|^{-1}$, we get
	\[
		|\widehat{G}_{k}^{r}(t)| \le \frac{C_{N_{0}}}{|t|^{N_{0} -3}} \left|\int_{\RR} \frac{|k|^{-N_{0}+3}}{|k|^2 + \tau^2} \, d\tau \right|\le C_{N_{0}} |k|^{-1} |kt|^{-N_{0} +3}.
	\]
	for some $C_{N_{0}}$ independent of $t$ and $k$.
\end{proof}

Next, we obtain the following. 

\begin{proposition}\label{prop:GreenFourier1}
Let $\kappa_0$ be defined as in \eqref{def-k0-rel}, and let $\delta_0$ be sufficiently small. For $|k| \le \kappa_{0} + \delta_0$, the Green function can be written as
	\begin{equation}
		\widehat{G}_{k}(t) = \delta(t) + \sum_{\pm} \widehat{G}^{osc}_{k, \pm}(t) + \widehat{G}^{r}_{k}(t)
	\end{equation}
	for all $t \in \mathbb{R}$ where
	\[
		\widehat{G}^{osc}_{k, \pm}(t) = e^{\lambda_{\pm}(k)t} a_{\pm}(k).
	\]
	Here
	\begin{equation}
		a_{\pm}(k) = \pm \frac{i \tau_{0}}{2} + \mathcal{O}(|k|^2)
	\end{equation}
	is a sufficiently smooth function supported in $\{|k| \le \kappa_{0} + \delta_{0} \}$ and 
	\begin{equation}\label{eq:Gr-lowfreq}
		\left||k|^{|\alpha|} \partial_{k}^{\alpha} \widehat{G}^{r}_{k}(t) \right| \le C |k|^{3} \langle kt \rangle^{-N_{0} +3 + |\alpha|}
	\end{equation}
	for some constant $C$ and for all $0 \le |\alpha| < N_{0}-2$.
\end{proposition}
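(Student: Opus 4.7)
The plan is to invert $\widetilde{G}(\lambda,k)=1/D(\lambda,k)$ by a Bromwich integral and to deform the contour past the imaginary axis, collecting residues at the zeros $\lambda_\pm(k)$ of $D$ provided by Theorems~\ref{theo-LangmuirE}--\ref{theo-Landau}. Starting from
$$
\widehat{G}_{k}(t)=\frac{1}{2\pi i}\int_{\{\Re\lambda=\gamma_{0}\}}e^{\lambda t}\widetilde{G}(\lambda,k)\,d\lambda
$$
and splitting $\widetilde{G}=1-\widehat{w}(k)m_{f}/(1+\widehat{w}(k)m_{f})$, the constant $1$ contributes the $\delta(t)$ term. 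For $|k|\le\kappa_{0}+\delta_{0}$ with $\delta_{0}$ small, the symbol $D$ extends smoothly across the imaginary axis to a strip $\{-\sigma_{0}<\Re\lambda\le\gamma_{0}\}$ via the Whitney extensions of $\mathcal{R}_{1}$ and $\mathcal{R}_{2}$ built in the proofs of Theorems~\ref{theo-LangmuirE}--\ref{theo-Landau}, and, by Propositions~\ref{prop-nogrowth}--\ref{prop:nonexistence} together with the uniqueness there, the only zeros of $D$ in the strip are the simple roots $\lambda_\pm(k)$. Deforming the contour to $\Re\lambda=-\sigma_{0}$ and applying the residue theorem then yields the claimed decomposition with $a_\pm(k)=1/\partial_\lambda D(\lambda_\pm(k),k)$, multiplied by a smooth cutoff $\chi(|k|)$ supported in $\{|k|\le\kappa_{0}+\delta_{0}\}$.

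To identify $a_\pm(k)=\pm i\tau_{0}/2+O(|k|^{2})$, I would expand $D$ using $\widehat{w}(k)=|k|^{-2}$, the Taylor expansion $\mu(|p-k|^{2})-\mu(|p|^{2})=-2k\cdot p\,\mu'(|p|^{2})+O(|k|^{2})$, and the identity $\int|p|^{2}\mu'(|p|^{2})\,dp=-d\rho_\mu/2$, obtaining $D(\lambda,k)=1+\tau_{0}^{2}/\lambda^{2}+O(|k|^{2})$ with $\tau_{0}^{2}=2\rho_\mu$, so that $\partial_\lambda D(\pm i\tau_{0},0)=\mp 2i/\tau_{0}$ and $a_\pm(0)=\pm i\tau_{0}/2$; higher smoothness and the even expansion in $|k|$ come from the regularity of $\lambda_\pm(k)$ in Theorems~\ref{theo-LangmuirE}--\ref{theo-Landau} combined with the evenness of $\varphi$. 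For the remainder, parametrize $\lambda=-\sigma_{0}+i\tau$ and integrate by parts $n$ times in $\tau$:
$$
\widehat{G}^{r}_{k}(t)=\frac{(-1)^{n}e^{-\sigma_{0}t}}{2\pi\,t^{n}}\int_{\mathbb{R}}e^{i\tau t}\,\partial_\lambda^{n}\!\left[\widetilde{G}(\lambda,k)-\sum_{\pm}\frac{a_\pm(k)\chi(|k|)}{\lambda-\lambda_\pm(k)}\right]d\tau.
$$
At $k=0$ the bracket vanishes identically, since the partial-fraction decomposition of $1/D_{0}(\lambda)=\lambda^{2}/(\lambda^{2}+\tau_{0}^{2})$ is exact, so the bracketed symbol is $O(|k|^{2})$ on the contour, with characteristic $\tau$-scale $|k|$ inherited from the dependence of $D$ on $\lambda/|k|$. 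The Jacobian $d\tau=|k|\,d\widetilde\tau$ in the rescaling supplies one more factor of $|k|$, producing the overall $|k|^{3}$ prefactor. Choosing $n=N_{0}-3$ (resp.\ $n=N_{0}-3-|\alpha|$ for the $\partial_{k}^{\alpha}$ variant) and invoking the derivative bounds of type~\eqref{eq:mfderiv-verticalline}, extended to the shifted contour, yields the $\langle kt\rangle^{-N_{0}+3+|\alpha|}$ decay, while the $|k|^{|\alpha|}$ prefactor on the left of~\eqref{eq:Gr-lowfreq} absorbs the $|k|^{-|\alpha|}$ cost of differentiating $\widehat{w}(k)=|k|^{-2}$, $\lambda_\pm(k)$, and $a_\pm(k)$.

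The main obstacle is the uniform control across $|k|=\kappa_{0}$, where the pole curves $\lambda_\pm(k)$ move from the imaginary axis into the open left half plane. One must choose $\sigma_{0}>0$ strictly larger than $\sup_{|k|\le\kappa_{0}+\delta_{0}}(-\Re\lambda_\pm(k))$ so that the contour lies below all poles, and yet smaller than the radius of the Whitney extension of $D$ below the axis; this is possible provided $\delta_{0}$ is small enough that the Landau damping rate from~\eqref{Landau-cpt} stays within the Whitney radius. A second delicate point is verifying that the pole-subtracted integrand is uniformly smooth in $k$ through the crossing $|k|=\kappa_{0}$, which requires working with the smooth extension of $D$ below the axis (rather than with the Plemelj boundary values) and carefully tracking the continuity of $\lambda_\pm(k)$, $a_\pm(k)$, and $\widehat{G}^{r}_{k}$ across the critical wave number. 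The third point, isolating the sharp $|k|^{3}$ prefactor rather than the weaker $O(|k|^{2})$ bound that the naive estimate $D-D_{0}=O(|k|^{2})$ produces, is what forces the rescaling $\tau=|k|\widetilde\tau$ and the use of the $\lambda/|k|$ self-similar structure of $D$.
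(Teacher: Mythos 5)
Your plan hinges on extending $\widetilde{G}(\lambda,k)$ past the imaginary axis by a Whitney extension, deforming the Bromwich contour to $\{\Re\lambda=-\sigma_{0}\}$, and "applying the residue theorem" there. This is the genuine gap: a Whitney extension is only finitely smooth as a function of $(\Re\lambda,\Im\lambda)\in\RR^2$, not holomorphic, so Cauchy's theorem and the residue theorem simply do not apply to it; the contour cannot be moved across the axis, and the poles $\lambda_\pm(k)$ (which for $|k|\le\kappa_0$ sit \emph{on} the axis) are not legitimately collected as residues of the extended symbol. The paper states this obstruction explicitly ("since $\varphi$ is merely $C^{N_{0}}$, it is not allowed to continue $\widetilde{G}(\lambda,k)$ analytically past the imaginary axis"). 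Two further symptoms show the deformation cannot work as described. First, your formula produces a factor $e^{-\sigma_{0}t}$ for the remainder, i.e.\ exponential decay uniform in $k$; but a finitely smooth symbol can only yield the polynomial decay $\langle kt\rangle^{-N_{0}+3}$ stated in the proposition, and the fact that the decay is in $kt$ rather than $t$ reflects that the natural $\lambda$-scale of $D$ is $|k|$ (through $\cH((-i\lambda\pm|k|^2)/(2|k|))$), so no $k$-independent strip width $\sigma_0$ is available as $k\to0$. Second, the claim that $\lambda_\pm(k)$ are the only zeros in the strip is unsupported: Propositions \ref{prop-nogrowth} and \ref{prop:nonexistence} only control $\Re\lambda\ge0$, and already the quartic model $\lambda^4+\lambda^2\tau_0^2-|k|^2\tau_1^2$ has a real root near $-\tau_1|k|$, which lies inside any fixed strip once $|k|$ is small, so the residue bookkeeping on $\{\Re\lambda=-\sigma_0\}$ would be wrong even formally.

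The paper's proof avoids all of this by never deforming the non-analytic symbol into the left half plane. It shifts the contour only to the imaginary axis (legitimate, since $\widetilde{G}$ is analytic in $\Re\lambda>0$ and continuous up to the axis), writes $\widetilde{G}=1+\widetilde{G}_{k,0}+\widetilde{G}_{k,1}$ with $\widetilde{G}_{k,0}=(-\lambda^2\tau_0^2+|k|^2\tau_1^2)/(\lambda^4+\lambda^2\tau_0^2-|k|^2\tau_1^2)$ an explicit meromorphic approximation to which genuine residue calculus applies (its left-half-plane pole $\mu_{-,0}\approx-\tau_1|k|$ contributes $O(|k|^3e^{-\theta_0|kt|})$, absorbed into $\widehat{G}^r_k$), and then estimates the non-meromorphic difference $\widetilde{G}_{k,1}$ directly on the axis: integration by parts in $\lambda$ on $\Gamma_1$, the Penrose-type lower bound $|D(i\tau,k)|\gtrsim1+|k|^{-2}$ on $\Gamma_2$, and on small semicircles $\mathcal{C}_\pm$ around $\pm i\tau_*(k)$ a finite Taylor expansion of the denominator about $\lambda_\pm(k)$, deforming only the explicit polar parts $b_{\pm,n}(k)(\lambda-\lambda_\pm(k))^{n-1}$ (which are meromorphic) and bounding the Taylor remainders on the arc. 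Your identification $a_\pm(0)=1/\partial_\lambda D(\pm i\tau_0,0)=\pm i\tau_0/2$ agrees with the paper, and your heuristic $|k|^3$ bookkeeping parallels the paper's counting, but without a valid contour representation for the pole-subtracted piece the remainder estimate \eqref{eq:Gr-lowfreq} does not follow from your argument.
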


\begin{proof}
Fix $0 < |k| \le \kappa_{0} + \delta_{0}$. There are poles of $\widetilde{G}(\lambda,k)$ exactly at $\lambda_{\pm}(k)$. Since $\varphi$ is merely $C^{N_{0}}$, it is not allowed to continue $\widetilde{G}(\lambda,k)$ analytically past the imaginary axis. But $\widetilde{G}(\lambda,k)$ is analytic in $\{ \Re \lambda >0 \}$ so that we can shift the contour $\{ \Re \lambda = \gamma_{0} \}$ to the imaginary axis as
\begin{equation}\label{eq:Gkhat}
	\widehat{G}_{k}(t) = \frac{1}{2\pi i} \int_{\Gamma} e^{\lambda t} \widetilde{G}(\lambda,k) d\lambda.
\end{equation}
To obtain the decay in time, we integrate by parts many times on $\{ \Re \lambda = \gamma_{0} \}$ and then take $\gamma_{0} \rightarrow 0^{+}$ so that for $n \ge 0$,
\begin{equation}\label{eq:Gkhat2}
	\widehat{G}_{k}(t) = \frac{(-1)^{n}}{2\pi i (|k|t)^{n}} \int_{\Gamma}  e^{\lambda t} |k|^{n} \partial_{\lambda}^{n} \widetilde{G}(\lambda,k) \, d \lambda.
\end{equation}
We decompose the contour into
	\[
		\Gamma = \Gamma_{1} \cup \Gamma_{2} \cup \mathcal{C}_{\pm}
	\]
where
\begin{equation*}
	\begin{aligned}
		&\Gamma_{1} = \{ \lambda = i \tau, \quad |\tau \pm \tau_* (k) | \ge |k| , \quad |\tau| > 2|k| \Upsilon_* + |k|^2 \}, \\
		&\Gamma_{1} = \{ \lambda = i \tau, \quad |\tau \pm \tau_* (k) | \ge |k| , \quad |\tau| \le 2|k| \Upsilon_* + |k|^2 \}, \\
		&\mathcal{C}_{\pm} = \{ \Re \lambda \ge 0 , \quad |\lambda \mp i \tau_* (k) = |k| \},
	\end{aligned}
\end{equation*}
where
	\begin{equation}
		\Upsilon_{\ast}
		=
		\begin{cases}
			\Upsilon, \qquad &\textrm{if } \Upsilon < \infty \\
			M, &\textrm{if } \Upsilon = \infty
		\end{cases}
	\end{equation}
for some sufficiently large $M >0$, which may depend on $k$.

Recall from \eqref{def-Dlambda1} that the dispersion relation is
\[
	D(\lambda,k) 
	= 1 - \frac{\hw(k)}{2|k|} \left[ \cH \left( -\frac{i\lambda +|k|^2}{2|k|} \right) - \cH \left(- \frac{i \lambda -|k|^2}{2|k|} \right)  \right].
\]
Using \eqref{disp1} with $m=2$, we get
\begin{equation}
	\begin{aligned}
		D(\lambda, k) = 1 + \frac{\widetilde{\tau}_{0}(k)}{\lambda^2} - \frac{|k|^2}{\lambda^4} \widetilde{\tau}_{1}(k)  - \frac{1}{\lambda^4} \mathcal{R}_{1} (\lambda,k)
	\end{aligned}
\end{equation}
where
\begin{equation}
\widetilde{\tau}_{0}(k)=2 \int_{\RR} \varphi(u)\, du, \qquad \widetilde{\tau}_{1}(k) =2|k|^2 \int_{\RR} \varphi(u) \, du +24\int_{\RR} u^2 \varphi(u) \, du,
\end{equation}
and
\begin{equation}
\mathcal{R}_{1}(\lambda, k)
:=
8|k|\int_{\RR} \left[ \frac{(u+\frac{|k|}{2})^{4}}{-\frac{i\lambda}{2|k|}-u-\frac{|k|}{2}} - \frac{(u-\frac{|k|}{2})^{4}}{-\frac{i\lambda}{2|k|}-u+\frac{|k|}{2}}\right] \varphi(u)\, du.
\end{equation}
for $\Re \lambda >0$.
Introducing
\[
	\tau_{0}^2 : = 2\int_{\RR} \varphi(u)\, du , \qquad \tau_{1}^2 = 24 \int_{\RR} u^2 \varphi(u) \, du,
\]
we have $\widetilde{\tau}_{0}(k) = \tau_{0}^2$ and $\widetilde{\tau}_{1}(k) = |k|^2 \tau_{0}^2 + \tau_{1}^2$. Therefore the dispersion relation can be written as
\begin{equation}\label{eq:dispersion}
	D(\lambda, k) = 1 + \frac{\tau_{0}^2}{\lambda^2} - \frac{|k|^2 \tau_{1}^2}{\lambda^4} - \frac{1}{\lambda^4 } \mathcal{R}(\lambda,k)
\end{equation}
where
\begin{equation}\label{eq:remainderexpression}
	\mathcal{R}(\lambda, k)
	:=
	|k|^4 \tau_{0}^2 + 8|k| \int_{\RR} \left[ \frac{(u+\frac{|k|}{2})^{4}}{-\frac{i\lambda}{2|k|}-u-\frac{|k|}{2}} - \frac{(u-\frac{|k|}{2})^{4}}{-\frac{i\lambda}{2|k|}-u+\frac{|k|}{2}}\right] \varphi(u)\, du.
\end{equation}
Note that when $|k| \gtrsim 1$, we have $\delta_{0} \le |k| \le \kappa_{0} + \delta_{0}$ so that $|k|$ is bounded.

\subsubsection*{Estimate of $\mathcal{R}(\lambda, k)$}
We first prove that
\begin{equation}\label{eq:remainder}
	|\partial_{\lambda}^{n} \mathcal{R}(\lambda, k)| 
	\lesssim |k|^{2-n}
\end{equation}
and
\begin{equation}\label{eq:remainder2}
	|\partial_{\lambda}^{n} \mathcal{R}(\lambda, k)| 
	\lesssim \begin{cases}
		|k|^4 ( 1 +|\lambda|^{-2}), \quad &\textrm{if } n =0 \\
		|k|^4 |\lambda|^{-2-n}, &\textrm{if } 1 \le n < N_{0} - 2,
	\end{cases}
\end{equation}
where all the estimates hold for $0 \le n < N_{0} -2$ and uniformly in $\Re \lambda \ge 0$. \\[5pt]
\noindent\textbf{Case 1: $\Upsilon = \infty$} \\
In fact, following \eqref{eq:dlambdak}, we obtain that
\begin{equation}\label{eq:Ndefinition}
	\begin{aligned}
		 &\int_{\RR} 
		 	\left[ \frac{(u+\frac{|k|}{2})^{4}}{-\frac{i\lambda}{2|k|}-u-\frac{|k|}{2}} - \frac{(u-\frac{|k|}{2})^{4}}{-\frac{i\lambda}{2|k|}-u+\frac{|k|}{2}}\right] \varphi(u)\, du \\[3pt]
		 &= 2i|k| \int_{\RR}
		 	\left[
		 		\frac{(u+ \frac{|k|}{2})^4}{\lambda - 2i|k|u - i|k|^2} - \frac{(u- \frac{|k|}{2})^4}{\lambda - 2i|k|u + i|k|^2}
		 	\right] \varphi(u) \, du \\[3pt]
		 &= 2i|k| \int_{0}^{\infty} e^{-\lambda t} \int_{\RR} e^{2i|k|ut}
		 	\left[
		 		e^{i|k|^2 t} ( u+ \frac{|k|}{2})^4 - e^{-i|k|^2 t} (u - \frac{|k|}{2})^4 
		 	\right] \varphi(u) \, du \, dt \\
		 &=: 2i|k|\int_{0}^{\infty} e^{-\lambda t} N(t) \, dt.
	\end{aligned}
\end{equation}
We claim that
\begin{equation}\label{eq:Nestimate}
	|N(t)| \lesssim |k| \langle kt \rangle^{-N_{0} +1}.
\end{equation}
When $|k| \gtrsim 1$, we write
\[
	N(t) = \int_{\RR} e^{2i|k|ut} u^4 \left[\varphi(u - \frac{|k|}{2}) - \varphi(u + \frac{|k|}{2})  \right] \, du.
\]
Using the regularity of $\varphi$, we get
\[
	|N(t)| \lesssim \langle kt \rangle^{-N_{0}} \lesssim |k| \langle kt \rangle^{-N_{0} +1}.
\]
When $|k| \ll 1$, we write
\begin{equation*}
	\begin{aligned}
		N(t) &= \int_{\RR} e^{2i|k|ut}
		\left[
		e^{i|k|^2 t} ( u+ \frac{|k|}{2})^4 - e^{-i|k|^2 t} (u - \frac{|k|}{2})^4 
		\right] \varphi(u) \, du .
	\end{aligned}
\end{equation*}
Using the regularity of $\varphi$ and noting that the integrand is of order $|k|$, we get
\[
	|N(t)| \lesssim |k| \langle kt \rangle^{-N_{0}+1}.
\]
Next, we estimate $\partial_{\lambda}^{n} \mathcal{R}(\lambda, k)$ using the decay of $N(\cdot)$. Namely, integrating by parts yields
\[
	|\partial_{\lambda}^{n} \mathcal{R}(\lambda, k) | 
	\lesssim |k|^4 + |k|^2 \int_{0}^{\infty} e^{-\Re \lambda t} t^n | N(t)| \, dt 
	\lesssim |k|^{2-n}
\]
for $0 \le n < N_{0} -2$.
Note that this also proves
\[
|\partial_{\lambda}^{n} \mathcal{R}(\lambda, k)| 
\lesssim \begin{cases}
	|k|^4 ( 1 +|\lambda|^{-2}), \quad &\textrm{if } n =0 \\
	|k|^4 |\lambda|^{-2-n}, &\textrm{if } 1 \le n < N_{0} - 2.
\end{cases}
\]
for $|\lambda| \lesssim |k|$. The same estimate follows in the case when $|k| \lesssim |\lambda|$ by considering higher order expansions in \eqref{disp1}. For instance, taking $m=3$, we get
\[
	\mathcal{R}(\lambda, k)
	=
	|k|^4 \tau_{0}^2 - \frac{|k|^4}{\lambda^2} \widetilde{\tau}_{2}(k)
	{- \frac{32|k|^3}{ \lambda^2}} \int_{\RR} \left[ \frac{(u+\frac{|k|}{2})^{6}}{-\frac{i\lambda}{2|k|}-u-\frac{|k|}{2}} - \frac{(u-\frac{|k|}{2})^{6}}{-\frac{i\lambda}{2|k|}-u+\frac{|k|}{2}}\right] \varphi(u)\, du.
\]
so that
\[
	|\mathcal{R}(\lambda, k)| \lesssim |k|^4 + \frac{|k|^4}{|\lambda|^2} + \frac{|k|^5}{|\lambda|^2} \int_{0}^{\infty}  |k| \langle kt \rangle^{-N_{0}+1} \, dt \lesssim |k|^4 ( 1 + |\lambda|^{-2}).
\] 
The corresponding estimates for derivatives in $\lambda$ can be obtained by a similar argument, provided that $N_{1} \ge 2N_{0}+2$.\\[5pt]
\noindent\textbf{Case 2: $\Upsilon < \infty$}\\
The estimates of $\mathcal{R}(\lambda, k)$ can be derived in the exactly same way. However, $\varphi(u)$ is compactly supported, so we do not need to consider the decay rate of the integrand at the infinity. Hence the condition $N_{1} \ge 2N_{0} +2$ is not necessary in this case.

\subsubsection*{Decomposition of $\widetilde{G}(\lambda,k)$}
We decompose $\widetilde{G}(\lambda,k)$ as follows.
\begin{equation*}
	\begin{aligned}
		\widetilde{G}(\lambda,k)
		&= 1 + \frac{1-D(\lambda,k)}{D(\lambda,k )} = 1 + \frac{ - \lambda^2 \tau_{0}^2 + |k|^2 \tau_{1}^2 + \mathcal{R}(\lambda, k)}{\lambda^4 + \lambda^2 \tau_{0}^2 -  |k|^2 \tau_{1}^2 - \mathcal{R}(\lambda, k)} \\[3pt]
		&= 1 + \widetilde{G}_{k,0}(\lambda) + \widetilde{G}_{k,1}(\lambda)
	\end{aligned}
\end{equation*}
where
\begin{equation*}
	\begin{aligned}
		\widetilde{G}_{k,0}(\lambda)
		&:=  \frac{ - \lambda^2 \tau_{0}^2 +|k|^2 \tau_{1}^2 }{\lambda^4 + \lambda^2 \tau_{0}^2 -|k|^2 \tau_{1}^2 }, \\[3pt]
		\widetilde{G}_{k,1}(\lambda)
		&:= \frac{\lambda^4 \mathcal{R}(\lambda,k)}{(\lambda^4 + \lambda^2 \tau_{0}^2 - |k|^2 \tau_{1}^2 - \mathcal{R}(\lambda, k)) (\lambda^4 + \lambda^2 \tau_{0}^2 - |k|^2 \tau_{1}^2 ) }.
	\end{aligned}
\end{equation*}
We claim that for $0 < |k| \le \kappa_{0} + \delta_{0}$, there is some $0 < \theta_{0} < \tau_{1}$ satisfying
\begin{align}
		\label{eq:est-Gkhat}
		&|\widehat{G}_{k,0}(t) - \sum_{\pm} e^{\lambda_{\pm, 0} t} \operatorname{Res} (\widetilde{G}_{k,0}(\lambda_{\pm, 0})) | \lesssim |k|^3 e^{-\theta_{0}|kt|}, \\
		\label{eq:est-Gkhat2}
		&|\widehat{G}_{k,1}(t) - \sum_{\pm} e^{\lambda_{\pm} t} \operatorname{Res} (\widetilde{G}_{k}(\lambda_{\pm})) + \sum_{\pm} e^{\lambda_{\pm,0}t} \operatorname{Res} (\widetilde{G}_{k,0}(\lambda_{\pm, 0})) | \lesssim |k|^3 \langle kt \rangle^{-N_{0} +3}.
\end{align}
These estimates conclude the proof of the proposition for $\alpha =0$ since $\sum_{\pm} e^{\lambda_{\pm, 0}t } \operatorname{Res} (\widetilde{G}_{k,0}(\lambda_{\pm, 0}))$ is cancelled and $\sum_{\pm} e^{\lambda_{\pm} t}\operatorname{Res} (\widetilde{G}_{k}(\lambda_{\pm}))$ gives us the oscillatory term.

\subsubsection*{Estimate of $\widehat{G}_{k,0}(t)$}
Recall that
\[
	\widetilde{G}_{k,0}(\lambda)
	=  \frac{ - \lambda^2 \tau_{0}^2 + |k|^2 \tau_{1}^2 }{\lambda^4 + \lambda^2 \tau_{0}^2 - |k|^2 \tau_{1}^2 },
\]
which is meromorphic in $\mathbb{C}$. The polynomial $\lambda^4 + \lambda^2 \tau_{0}^2 - |k|^2 \tau_{1}^2$ has four distinct roots, given by
\begin{equation*}
	\begin{aligned}
		\lambda_{\pm, 0} = \pm i \left( \frac{\tau_{0}^2 + \sqrt{\tau_{0}^4 + 4 \tau_{1}^2 |k|^2 }}{2} \right)^{1/2}, \quad
		\mu_{\pm, 0} = \pm \left( \frac{ - \tau_{0}^2 + \sqrt{\tau_{0}^4 +4 \tau_{1}^2 |k|^2}}{2} \right)^{1/2}
	\end{aligned}
\end{equation*}
Note that
	\begin{equation}\label{eq:roots}
		|\lambda_{\pm, 0}(k) \mp i \tau_* (k)| \lesssim |k|^2, \qquad | \mu_{\pm, 0}(k) \mp \tau_{1}|k| | \lesssim |k|^2.
	\end{equation}
Only one root $\mu_{+, 0 }$, which is located on the positive real axis, is on the right of $\Gamma$, so the residue theorem yields
\begin{equation}\label{eq:Gk0}
	\begin{aligned}
		\widehat{G}_{k,0}(t) 
		&= \frac{1}{2\pi i} \int_{\Gamma} e^{\lambda t} \widetilde{G}_{k,0}(\lambda) \, d \lambda \\
		&= \sum_{\pm} e^{\lambda_{\pm, 0} t} \operatorname{Res} (\widetilde{G}_{k,0}(\lambda_{\pm, 0})) + e^{\mu_{-, 0 } t} \operatorname{Res} (\widetilde{G}_{k,0}(\mu_{-,0}))
		+ \frac{1}{2\pi i} \lim_{\gamma_{0} \rightarrow -\infty} \int_{\{ \Re \lambda = \gamma_{0} \}} e^{\lambda t } \widetilde{G}_{k,0}(\lambda) \, d\lambda \\
		&= \sum_{\pm} e^{\lambda_{\pm, 0} t} \operatorname{Res} (\widetilde{G}_{k,0}(\lambda_{\pm, 0})) + e^{\mu_{-, 0 } t} \operatorname{Res} (\widetilde{G}_{k,0}(\mu_{-,0})).
	\end{aligned}
\end{equation}
where the limit vanishes as $\gamma_{0} \rightarrow -\infty$ since the integrand decays exponentially. We compute that
\begin{equation}\label{eq:Gk02}
	\begin{aligned}
		e^{\mu_{-, 0 } t} \operatorname{Res} (\widetilde{G}_{k,0}(\mu_{-,0}))
		&= \frac{\mu_{-, 0}^4 e^{\mu_{-,0}t}}{(\mu_{-,0} - \mu_{+,0})(\mu_{-,0} - \lambda_{+,0})(\mu_{-,0} - \lambda_{-,0}) } \\
		&= \frac{\mu_{-, 0}^3 e^{\mu_{-,0}t}}{2(\mu_{-,0} - \lambda_{+,0})(\mu_{-,0} - \lambda_{-,0}) }.
	\end{aligned}
\end{equation}
where the last equality follows from $\mu_{+,0} = -\mu_{-,0}$. Using \eqref{eq:roots}, we have
\[
 	|e^{\mu_{-, 0 } t} \operatorname{Res} (\widetilde{G}_{k,0}(\mu_{-,0}))| \lesssim |k|^3 e^{-\theta_{0} |kt|}
\]
for some $0 < \theta_{0} < \tau_{1}$, which implies \eqref{eq:est-Gkhat}.

\subsubsection*{Estimate of $\widehat{G}_{k,1}(t)$ on $\Gamma_{1}$}
Recall that
\[
\widetilde{G}_{k,1}(\lambda)
= \frac{\lambda^4 \mathcal{R}(\lambda,k)}{(\lambda^4 + \lambda^2 \tau_{0}^2 - |k|^2 \tau_{1}^2 - \mathcal{R}(\lambda, k)) (\lambda^4 + \lambda^2 \tau_{0}^2 - |k|^2 \tau_{1}^2 ) }.
\]
On $\Gamma_1$, we have $\lambda = i \tau$ with $|\tau| > 2|k| \Upsilon_{*} + |k|^2$ and $|\tau \pm \tau_*| \ge |k|$. We first study the lower bound of the denominator.\\[5pt]
\noindent\textbf{Case 1: $\Upsilon = \infty$}\\
When $\Upsilon = \infty$, we have $\kappa_{0} =0$ so that $0 <|k| \ll 1$. Fix $|k|$ and then take $\Upsilon_* = M$ sufficiently large so that
	\[
		|\tau^4 - \tau^2 \tau_{0}^2 -|k|^2 \tau_{1}^2 | \gtrsim 1 + |\tau|^4.
	\]
for $|\tau|> 2|k|M + |k|^2$. The same lower bound also hold for $\tau^4 - \tau^2 \tau_{0}^2 -|k|^2 \tau_{1}^2 - \mathcal{R}(i \tau, k)$ since $|k| \ll 1 \lesssim |\tau|$ so that $|\mathcal{R}(i\tau, k)| \lesssim |k|^4 |\tau|^{-2} \ll |k|^2$.\\[5pt]
\noindent\textbf{Case 2: $\Upsilon < \infty$}\\
When $\Upsilon < \infty$, the lower bound for $\tau^4 - \tau^2 \tau_{0}^2 - |k|^2 \tau_{1}^2$ is as follows.
\begin{equation}\label{eq:lowerbound}
	|\tau^4 - \tau^2 \tau_{0}^2 - |k|^2 \tau_{1}^2|
	\gtrsim
	\begin{cases}
		1 + |\tau|^4,  					&\textrm{if} \quad |\tau| \ge \dfrac{3\tau_{*}}{2} \\[5pt]
		|\tau \mp \tau_* | +|k|, \qquad &\textrm{if} \quad \dfrac{\tau_*}{2} \le \pm \tau \le \dfrac{3\tau_*}{2} \\[5pt]
		|\tau|^2 +|k|^2, 				&\textrm{if} \quad 2|k|\Upsilon + |k|^2 < |\tau| \le \dfrac{\tau_*}{2}
	\end{cases}
\end{equation}
Indeed, note that
\[
	\tau^4 - \tau^2 \tau_{0}^2 - |k|^2 \tau_{1}^2
	= (\tau +i\lambda_{+,0})(\tau +i\lambda_{-,0})(\tau +i\mu_{+,0})(\tau +i\mu_{-,0})
\]
has four distinct zeros $- i\lambda_{\pm, 0} \sim \pm \tau_{0}$ and $-i \mu_{\pm, 0} \sim \mp i \tau_{1} |k|$. Also $|\tau_* - \tau_0| \lesssim |k|^2$. Therefore, if $|\tau| \ge 3\tau_* / 2$, then $|\tau^4 - \tau^2 \tau_{0}^2 -|k|^2 \tau_{1}^2| \gtrsim |\tau|^2 |\tau^2 - \tau_{*}^2| \gtrsim 1 +|\tau|^4$. If $\tau_* / 2 \le \tau \le 3\tau_* / 2$, then $|\tau^4 - \tau^2 \tau_{0}^2 -|k|^2 \tau_{1}^2| \gtrsim |\tau + i \lambda_{+, 0} | \gtrsim |\tau- \tau_*| + |k|$, and similarly for the $-$ case. If $2|k|\Upsilon + |k|^2 < |\tau| \le \tau_{*} /2$, then $|\tau^4 - \tau^2 \tau_{0}^2 -|k|^2 \tau_{1}^2| \gtrsim |(\tau + i \mu_{+,0})(\tau + i \mu_{-,0})| \gtrsim |\tau|^2 + |k|^2$. 

Next, we claim that the same lower bound \eqref{eq:lowerbound} also hold for $\tau^4 - \tau^2 \tau_{0}^2 -|k|^2 \tau_{1}^2 - \mathcal{R}(i\tau, k)$. Observe that
\begin{equation}
	\begin{aligned}
		\mathcal{R}(i\tau, k)
		&= |k|^4 \tau_{0}^2 
		+ {8|k|} \int_{\{ |u| < \Upsilon \}}
		\left[ \frac{(u+ \frac{|k|}{2})^{4}}{\frac{\tau}{2|k|} - u - \frac{|k|}{2}}
		-\frac{(u- \frac{|k|}{2})^{4}}{\frac{\tau}{2|k|} - u + \frac{|k|}{2}} \right] \varphi(u) \, du \\
		&= |k|^4 \tau_{0}^2 + {8|k|} \int_{\{ |u| < \Upsilon \}} \frac{u^4}{\frac{\tau}{2|k|} - u} \left[ \varphi(u- \frac{|k|}{2}) - \varphi(u + \frac{|k|}{2}) \right] \, du \\
		&= |k|^4 \tau_{0}^2 + {16|k|} \int_{0}^{\Upsilon} \frac{u^5}{\frac{\tau^2}{4|k|^2} - u^2} \left[ \varphi(u- \frac{|k|}{2}) - \varphi(u + \frac{|k|}{2}) \right] \, du
	\end{aligned}
\end{equation}
is strictly positive since $\varphi$ is even, strictly decreasing in $u$, and $|\tau| > 2|k| \Upsilon +|k|^2$.

When $|k| \gtrsim 1$, note that $0 < \mathcal{R}(i\tau, k) \le C |k|^2$ for some positive constant $C$. Since $|\tau \mp \tau_*| \ge |k| \gtrsim 1$, we get
\[
	|\tau^4 - \tau^2 \tau_{0}^2 - |k|^2 \tau_{1}^2 - \mathcal{R}(i\tau, k)| \ge \min \left\{ |\tau^4 - \tau^2 \tau_{0}^2 - |k|^2 \tau_{1}^2|, |\tau^4 - \tau^2 \tau_{0}^2 - |k|^2 (\tau_{1}^2 +C)| \right\}
\]
which implies \eqref{eq:lowerbound} for $\tau^4 - \tau^2 \tau_{0}^2 -|k|^2 \tau_{1}^2 - \mathcal{R}(i\tau, k)$.

When $|k| \ll 1$ and $|k| \ll |\tau|$, we have $|\mathcal{R}(\lambda, k)| \lesssim |k|^4 |\lambda|^{-2} \ll |k|^2$ from \eqref{eq:remainder2}, so we may view $\mathcal{R}(\lambda, k)$ as a remainder. The lower bound \eqref{eq:lowerbound} holds for $\tau^4 - \tau^2 \tau_{0}^2 -|k|^2 \tau_{1}^2 - \mathcal{R}(i\tau, k)$.

When $|k| \ll 1$ and $|k| \gtrsim |\tau|$, we instead estimate $\tau^4 - \tau^2 \tau_{0}^2 -|k|^2 \tau_{1}^2 - \mathcal{R}(i\tau, k)$ directly, using the sign of $\mathcal{R}(i\tau, k)$.  Since $|\tau| \lesssim |k| \ll 1$, we have $|\tau| \le \tau_{0} /2$. Hence
\[
	|\tau^4 - \tau^2 \tau_{0}^2 - |k|^2 \tau_{1}^2  - \mathcal{R}(i\tau, k)| \gtrsim \frac{3}{4} \tau^2 \tau_{0}^2 + |k|^2 \tau_{1}^2 + \mathcal{R}(i\tau, k) \gtrsim |\tau|^2 + |k|^2.
\]

\subsubsection*{Estimate of $\widehat{G}_{k,1}(t)$ on $\Gamma_{2}$}
In this case, $\mathcal{R}(i\tau, k)$ is of order $|k|^2$, so we cannot view it as a remainder term. Instead, we  can use a lower bound on $D(i \tau, k)$ since $|\tau| \le 2|k| \Upsilon_{*} + |k|^2$. Indeed, we claim that 
\begin{equation}\label{eq:lowbd2}
	|D(i \tau, k)| \gtrsim 1 + \frac{1}{|k|^2}
\end{equation}
for $|\tau|\le 2|k|\Upsilon_* + |k|^2$ and $|\tau \pm \tau_*| \ge |k|$.

When $\Upsilon = \infty$, we obtain \eqref{eq:lowbd2} by applying \eqref{eq:lowbd} since $\{|\tilde{\tau}| \le 2M + \delta_{0} \}$ is compact in $\mathbb{R}$.
When $\Upsilon < \infty$, note that $\kappa_{0} >0$ and $\Phi(\kappa_{0}) =0$, see \eqref{eq:defPhi}. 
We obtain from \eqref{eq:lowbd} that $|D(i\tilde{\tau}|k|, k) |\gtrsim 1+|k|^{-2}$ for $|\tilde{\tau}| \le 2\Upsilon$ and $|k| \le \kappa_{0} + \delta_{0}$. For $2\Upsilon \le |\tilde{\tau}| < 2\Upsilon + |k| - \varepsilon$, recall rom \eqref{form-Ditau2} that
\[
	|D(i\tilde \tau |k|,k) |
	\ge \frac{\pi\hat{w}(k)}{2|k|} 
	\Big|\varphi \left( \frac{\tilde\tau + |k|}{2} \right) - \varphi \left(\frac{\tilde\tau - |k|}{2} \right) \Big|.
\]
Since exactly one of $\frac{\tilde{\tau} \pm |k|}{2}$ is contained in the support of $\varphi$, we obtain that $|D(i\tilde{\tau}|k|, k)| \gtrsim \hat{w}(k)|k|^{-1}$ for $2\Upsilon \le |\tilde{\tau}| \le 2\Upsilon + |k| - \varepsilon$ for any positive $\varepsilon$.
At $|\tilde{\tau}| = \Upsilon + |k|/2$, the definition of $\kappa_{0}$ tells us that $\Phi(|k|) = D(\pm i(2|k|\Upsilon +|k|^2), k) \neq 0$ for $|k| \le \kappa_{0} /2$. Hence we get \eqref{eq:lowbd2} for $|k| \le \kappa_{0} /2$.
For $\kappa_{0} /2 < |k| \le \kappa_{0} + \delta_{0}$, we have $|D(i\tau, k)| \gtrsim 1$ since $|\tau \pm \tau_*| \ge |k| > \kappa_{0} /2$ and $\tau = \pm \tau_*$ are the only zeros of $D(i \tau, k)$. This concludes the proof of the claim \eqref{eq:lowbd2}. 

Recall that
	\[
		\widetilde{G}_{k,1} (i \tau) = \frac{\mathcal{R}(i\tau, k)}{(\tau^4 - \tau^2 \tau_{0}^2 -|k|^2 \tau_{1}^2 ) D(i\tau, k)}.
	\]
Note from \eqref{eq:remainder2} that $|\mathcal{R}(i\tau, k)| \lesssim |k|^2$. Since $|\tau^4 - \tau^2 \tau_{0}^2 -|k|^2 \tau_{1}^2| \gtrsim |\tau|^2 +|k|^2$, we get
	\[
		|\widetilde{G}_{k,1}(i\tau) \lesssim \frac{|k|^2}{|\tau|^2 +|k|^2|} \cdot \frac{|k|^3}{1+|k|^3} \lesssim |k|^3.
	\]
This implies
	\[
		\left| \int_{\Gamma_2} e^{\lambda t} \widetilde{G}_{k,1} (\lambda) \, d \lambda \right| \lesssim |k|^4.
	\]

Similarly, we bound $|k|^{n} \partial_{\lambda}^{n} \widetilde{G}_{k,1}(\lambda)$. Observe that
	\[
		\left| |k| \partial_{\lambda} \left( \frac{1}{\lambda^4 + \lambda^2 \tau_0^2 -|k|^2 \tau_{1}^2} \right)_{\lambda = i\tau} \right|
		\lesssim \frac{|k| |\tau|}{(|\tau|^2 +|k|^2)^2} \lesssim \frac{1}{|\tau|^2 +|k|^2},
	\]
and similarly $|k|^{n}\partial_{\lambda}^{n}$ derivatives of $(\lambda^{4} +\lambda^2 \tau_{0}^2 - |k|^2 \tau_{1}^2)^{-1}$, evaluated at $\lambda = i\tau$, obeys the same upper bound as that of $(\tau^4 - \tau^2 \tau_{0}^2 - |k|^2 \tau_{1}^2)^{-1}$. In addition, $|k|^n \partial_{\lambda}^n \mathcal{R}(\lambda,k)$ satisfy the same upper bound as that of $\mathcal{R}(\lambda, k)$, see \eqref{eq:remainder} and \eqref{eq:remainder2}. Finally, in view of \eqref{eq:displap}, $|k|^n \partial_{\lambda}^{n} \mathcal{R}(\lambda, k)$ satisfy the same bound as that of $1-D(\lambda, k)$ so that
	\[
		\left| |k| \partial_{\lambda} \left( \frac{1}{D(\lambda, k)} \right)_{\lambda = i \tau} \right|
		= \frac{1}{|D(i\tau, k)|} \cdot \frac{|k| |\partial_{\lambda} D(i\tau, k)|}{|D(i\tau, k)|}
		\lesssim \frac{|k|^3}{1+|k|^3},
	\]
and similarly for $|k|^n \partial_{\lambda}^n$ derivatives of $D(\lambda, k)^{-1}$. To sum up, we get
\[
\left| \int_{\Gamma_2} e^{\lambda t} |k|^n \partial_{\lambda}^{n} \widetilde{G}_{k,1} (\lambda) \, d \lambda \right| \lesssim |k|^4.
\]
\subsubsection*{Estimate of $\widehat{G}_{k,1}(t)$ on $\mathcal{C}_{\pm}$}
Recall that
\[
	\mathcal{C}_{\pm} = \{ \Re \lambda \ge 0 , \quad |\lambda \mp i \tau_* (k) |= |k| \}.
\]
We need to deal with the poles $\lambda_{\pm}(k)$ of $\widetilde{G}_{k}(t)$. We first note that 
\begin{equation}\label{eq:remainderonC}
|\partial_{\lambda}^{n} \mathcal{R}(\lambda, k)| \lesssim |k|^4
\end{equation}
uniformly in $\lambda \in \mathcal{C}_{\pm}$ for $0 \le n < N_{0}-2$.
Indeed, if $|k| \gtrsim 1$, then $|\partial_{\lambda}^{n} \mathcal{R}(\lambda, k)| \lesssim |k|^{2-n} \lesssim |k|^4$ from \eqref{eq:remainder}.
If $|k| \ll 1$, then $|\lambda| \ge \tau_* (k) - |k| \ge \tau_* (k) / 2 \gtrsim 1$ so the claim follows from \eqref{eq:remainder2}.

We may write
\[
	\widetilde{G}_{k,1}(\lambda) = \frac{\lambda^4}{\lambda^4 + \lambda^2 \tau_{0}^2 - |k|^2 \tau_{1}^2  - \mathcal{R}(\lambda, k)} - \frac{\lambda^4}{\lambda^4 + \lambda^2 \tau_{0}^2 - |k|^2 \tau_{1}^2}.
\]

Let $\lambda_{\pm}(k)$, $\lambda_{\pm, 0}(k)$ be the nonreal poles of $\widetilde{G}(\lambda,k)$, $\widetilde{G}_{k, 0}(\lambda)$, respectively. Then 
\begin{equation}\label{eq:poles}
	|\lambda_{\pm}(k) - \lambda_{\pm, 0}(k)| \lesssim |k|^4
\end{equation}
because
\begin{align*}
	|\mathcal{R}(\lambda_{\pm, 0} , k)|
	= |(\lambda_{\pm, 0}^4 + \lambda_{\pm, 0}^2 \tau_{0}^2 -|k|^2 \tau_{1}^2 ) - (\lambda_{\pm}^4 + \lambda_{\pm}^2 \tau_{0}^2 - |k|^2 \tau_{1}^2)| 
	\gtrsim |\lambda_{\pm} - \lambda_{\pm,0}|.
\end{align*}
and \eqref{eq:remainderonC}.
For $|\lambda \mp i \tau_* | \le |k|$ with $\Re \lambda \ge 0$, we may write
\begin{align*}
	\lambda^4 + \lambda^2 \tau_{0}^2 -|k|^2 \tau_{1}^2 
	&=
	\sum_{n=1}^{N_{0}-3} a_{\pm, n, 0}(k)(\lambda - \lambda_{\pm, 0}(k))^{n} + \mathcal{R}_{\pm, 0}(\lambda, k), \\
	\lambda^4 + \lambda^2 \tau_{0}^2 -|k|^2 \tau_{1}^2  - \mathcal{R}(\lambda,k)
	&=
	\sum_{n=1}^{N_{0}-3} a_{\pm, n}(k) (\lambda - \lambda_{\pm}(k))^{n} + \mathcal{R}_{\pm}(\lambda, k).
\end{align*}
Using \eqref{eq:remainderonC} and \eqref{eq:poles}, we get
\[
	|a_{\pm, n, 0} (k) - a_{\pm, n}(k) | \lesssim |k|^4, \qquad |\partial^{n} \mathcal{R}_{\pm, 0} (\lambda, k) - \partial_{\lambda}^{n} \mathcal{R}_{\pm}(\lambda, k)| \lesssim |k|^4
\]
for $0 \le n < N_{0}- 2$.

By the definition of $\lambda_{\pm,0 }$ and $\lambda_{\pm}$, the coefficients $a_{\pm, 1,0}$ and $a_{\pm, 1}$ do not vanish. Hence
\begin{equation}
	\begin{aligned}\label{eq:bn0}
	\frac{\lambda^4}{\lambda^4 + \lambda^2 \tau_{0}^2 -|k|^2 \tau_{1}^2 }
	&=
	\sum_{n=1}^{N_{0}-3} b_{\pm, n, 0}(k) (\lambda - \lambda_{\pm, 0}(k))^{n-1} + \widetilde{\mathcal{R}}_{\pm, 0}(\lambda, k), \\
	\frac{\lambda^4}{\lambda^4 + \lambda^2 \tau_{0}^2 -|k|^2 \tau_{1}^2  - \mathcal{R}(\lambda,k)}
	&=
	\sum_{n=1}^{N_{0}-3} b_{\pm, n}(k) (\lambda - \lambda_{\pm}(k))^{n-1} + \widetilde{\mathcal{R}}_{\pm}(\lambda, k),
	\end{aligned}
\end{equation}
for $|\lambda \mp i \tau_* | \le |k|$ with $\Re \lambda \ge 0$. Here $b_{\pm, n, 0}$ and $b_{\pm, n}$ can be computed in terms of $a_{\pm,n,0}$ and $a_{\pm, n}$. Similarly, one can show that
\begin{equation}\label{eq:est-bn0R}
|b_{\pm, n, 0} (k) - b_{\pm, n}(k) | \lesssim |k|^4, \qquad |\partial_\lambda^{n} \widetilde{\mathcal{R}}_{\pm, 0} (\lambda, k) - \partial_{\lambda}^{n} \widetilde{\mathcal{R}}_{\pm}(\lambda, k)| \lesssim |k|^4
\end{equation}
for $0 \le n < N_{0}-2$. Observe that
\begin{equation}\label{eq:b00}
	b_{\pm, 0, 0}(k) = \operatorname{Res} (\widetilde{G}_{k,0} (\lambda_{\pm,0}) ), \qquad b_{\pm,0}(k) = \operatorname{Res} (\widetilde{G}_{k}(\lambda_{\pm})).
\end{equation}
Using \eqref{eq:bn0}, we write
\[
	\widetilde{G}_{k,1}(\lambda) 
	= \sum_{n=1}^{N_{0}-3} \left[ \frac{b_{\pm,n}(k)}{(\lambda - \lambda_{\pm}(k))^{-n+1}} - \frac{b_{\pm,n, 0}(k)}{(\lambda - \lambda_{\pm, 0}(k))^{-n+1}} \right] + \widetilde{\mathcal{R}}_{\pm}(\lambda, k) - \widetilde{\mathcal{R}}_{\pm,0}(\lambda, k).
\]
Observe that the terms inside the summation are meromorphic in $\mathbb{C}$ so that we may deform the contour to the left half plane in $\lambda$. Applying the residue theorem,
\begin{multline}
	\frac{1}{2\pi i} \int_{\mathcal{C}_{\pm}} e^{\lambda t} \widetilde{G}_{k,1}(\lambda) \, d\lambda \\
	=
	\sum_{\pm} e^{\lambda_{\pm} t} b_{\pm,0}(k)  - \sum_{\pm} e^{\lambda_{\pm,0} t} b_{\pm, 0, 0}(k) 
	+ \frac{1}{2\pi i} \sum_{n=0}^{N_{0}-3} \int_{\mathcal{C}_{\pm}^{*}} e^{\lambda t} \left[ \frac{b_{\pm,n}(k)}{(\lambda - \lambda_{\pm}(k))^{-n+1}} - \frac{b_{\pm,n, 0}(k)}{(\lambda - \lambda_{\pm, 0}(k))^{-n+1}} \right] \, d \lambda \\
	+ \frac{1}{2\pi i} \int_{| \tau \mp \tau_* | \le |k| } e^{i \tau t} \left[\widetilde{\mathcal{R}}_{\pm}(\lambda, k) - \widetilde{\mathcal{R}}_{\pm,0}(\lambda, k)  \right] \, d \tau,
\end{multline}
where $\mathcal{C}_{\pm}^{*} = \{ \Re \lambda \le 0 , \quad |\lambda \mp i \tau_* (k)| = |k| \}$.

As the remainder satisfies \eqref{eq:est-bn0R}, the integral containing the remainder is estimated by
	\[
		\frac{1}{2\pi} \left| \int_{| \tau \mp \tau_* | \le |k| } e^{i \tau t} \left[\widetilde{\mathcal{R}}_{\pm}(\lambda, k) - \widetilde{\mathcal{R}}_{\pm,0}(\lambda, k)  \right] \, d \tau \right| \lesssim |k|^5.
	\]
Next, we consider the integral over $\mathcal{C}_{\pm}^{*}$. On the semicircle, we have $|\lambda| \lesssim 1$ and
	\begin{equation}\label{eq:lambdasemicircle}
		|\lambda - \lambda_{\pm}(k)| \ge \frac{|k|}{2}, \qquad |\lambda - \lambda_{\pm, 0}(k)| \ge \frac{|k|}{2}.
	\end{equation}
Also note that $b_{\pm, n}$ and $b_{\pm, n, 0}$ are bounded as $|k| \le \kappa_{0} + \delta_{0}$. 
Now for $n=0$, we get
	\begin{align}
		\left| \frac{b_{\pm, 0}(k)}{\lambda - \lambda_{\pm}(k)} - \frac{b_{\pm, 0, 0}(k)}{\lambda - \lambda_{\pm, 0}(k)} \right|
		\lesssim \frac{|k|^4}{|\lambda - \lambda_{\pm}(k)| |\lambda - \lambda_{\pm,0}(k)|}
		\lesssim |k|^2.
	\end{align}
Similarly,
	\begin{equation}\label{eq:poly}
		\sum_{n=1}^{N_{0}-3}\left| \frac{b_{\pm,n}(k)}{(\lambda - \lambda_{\pm}(k))^{-n+1}} - \frac{b_{\pm,n, 0}(k)}{(\lambda - \lambda_{\pm, 0}(k))^{-n+1}} \right| \lesssim |k|^4. 
	\end{equation}
Therefore
	\[
		\left| \frac{1}{2\pi i} \sum_{n=0}^{N_{0}-3} \int_{\mathcal{C}_{\pm}^{*}} e^{\lambda t} \left[ \frac{b_{\pm,n}(k)}{(\lambda - \lambda_{\pm}(k))^{-n+1}} - \frac{b_{\pm,n, 0}(k)}{(\lambda - \lambda_{\pm, 0}(k))^{-n+1}} \right] \, d \lambda \right|
		\lesssim \int_{\mathcal{C}_{\pm}^{*}} |k|^2 \, d \lambda 
		\lesssim |k|^3.
	\]
	
Using a similar argument, one can estimate for $|k|^{n}\partial_{\lambda}^{n} \widetilde{G}_{k, 1}(\lambda)$. Indeed,
\begin{equation}
	\partial_{\lambda}^{n} \widetilde{G}_{k,1}(\lambda) 
	= \sum_{j=1}^{N_{0}-3} \frac{d^{n}}{d\lambda^n} \left[ \frac{b_{\pm,j}(k)}{(\lambda - \lambda_{\pm}(k))^{-j+1}} - \frac{b_{\pm,j, 0}(k)}{(\lambda - \lambda_{\pm, 0}(k))^{-j+1}} \right] + \partial_{\lambda}^{n} \widetilde{\mathcal{R}}_{\pm}(\lambda, k) - \partial_{\lambda}^{n} \widetilde{\mathcal{R}}_{\pm,0}(\lambda, k)
\end{equation}
for $0 \le n < N_{0} - 2$. From \eqref{eq:est-bn0R}, we have
\begin{equation}
\frac{1}{2\pi} \left| \int_{| \tau \mp \tau_* | \le |k| } e^{i \tau t} |k|^{n} \left[\partial_{\lambda}^{n} \widetilde{\mathcal{R}}_{\pm}(\lambda, k) - \partial_{\lambda}^{n}\widetilde{\mathcal{R}}_{\pm,0}(\lambda, k)  \right] \, d \tau \right| \lesssim |k|^{n+5}.
\end{equation}
Next, integrating by parts, we obtain from the residue theorem that
\[
	\frac{(-1)^{n}}{2\pi i t^n} \int_{\mathcal{C}_{\pm}}  e^{\lambda t} \frac{d^n}{d \lambda^n} 
		\left[
			\sum_{j=1}^{N_{0}-3} \frac{b_{\pm,j}(k)}{(\lambda - \lambda_{\pm}(k))^{-j+1}}
		 \right] \, d \lambda
	= e^{\lambda_{\pm}t} b_{\pm, 0}(k),
\]
and
\[
	\frac{(-1)^{n}}{2\pi i t^n} \int_{\mathcal{C}_{\pm}}  e^{\lambda t} \frac{d^n}{d \lambda^n} 
	\left[
	\sum_{j=1}^{N_{0}-3} \frac{b_{\pm,j,0}(k)}{(\lambda - \lambda_{\pm,0}(k))^{-j+1}}
	\right] \, d \lambda
	= e^{\lambda_{\pm,0}t} b_{\pm, 0, 0}(k).
\]
It remains to show
\begin{equation}\label{eq:semicirc-higher}
	\left| \frac{1}{2\pi i} \int_{\mathcal{C}_{\pm}^{*}} e^{\lambda t}|k|^n \frac{d^{n}}{d\lambda^n} \left[ \frac{b_{\pm,j}(k)}{(\lambda - \lambda_{\pm}(k))^{-j+1}} - \frac{b_{\pm,j, 0}(k)}{(\lambda - \lambda_{\pm, 0}(k))^{-j+1}} \right]  \, d\lambda \right|
	\lesssim |k|^3
\end{equation}
for $0 \le j , n < N_{0} -2$. When $j=0$, we get
\begin{align*}
	\left| \frac{d^{n}}{d\lambda^n} \left[ \frac{b_{\pm,0}(k)}{\lambda - \lambda_{\pm}(k)} - \frac{b_{\pm,0, 0}(k)}{\lambda - \lambda_{\pm, 0}(k)} \right] \right|
	&\lesssim \left| \frac{b_{\pm,0}(k)}{(\lambda - \lambda_{\pm}(k))^{n+1}} - \frac{b_{\pm,0, 0}(k)}{(\lambda - \lambda_{\pm, 0}(k))^{n+1}}  \right| \\[4pt]
	&\lesssim \frac{|b_{\pm,0}(k) - b_{\pm,0,0}(k)|}{|\lambda - \lambda_{\pm}(k)|^{n+1}} + \frac{|\lambda_{\pm}(k) - \lambda_{\pm,0}(k)|}{|\lambda - \widetilde{\lambda}_{\pm}(k)|^{n+2}}
\end{align*}
for $\lambda \in \mathcal{C}_{\pm}^{*}$, where $\widetilde{\lambda}_{\pm}(k)$ is some number on the line segment connecting $\lambda_{\pm}(k)$ and $\lambda_{\pm,0}(k)$. Using \eqref{eq:est-bn0R} and \eqref{eq:lambdasemicircle}, we get
\begin{align*}
	\left| \frac{d^{n}}{d\lambda^n} \left[ \frac{b_{\pm,0}(k)}{\lambda - \lambda_{\pm}(k)} - \frac{b_{\pm,0, 0}(k)}{\lambda - \lambda_{\pm, 0}(k)} \right] \right|
	\lesssim |k|^{-n+2},
\end{align*}
which implies \eqref{eq:semicirc-higher}.
The other terms are easier, see \eqref{eq:poly}. To sum up,
\[
	\frac{(-1)^{n}}{2\pi i (|k|t)^{n}} \int_{\mathcal{C}_{\pm}}  e^{\lambda t} |k|^{n} \partial_{\lambda}^{n} \widetilde{G}_{k,1}(\lambda) \, d \lambda
	= \sum_{\pm} e^{\lambda_{\pm} t} b_{\pm,0}(k) -\sum_{\pm} e^{\lambda_{\pm,0} t}b_{\pm,0,0}(k)+ \mathcal{O}(|k|^3 \langle kt \rangle^{-n} )
\]
for $0 \le n < N_{0} -2$. From \eqref{eq:Gkhat2} and \eqref{eq:b00}, we obtain \eqref{eq:est-Gkhat2}.

\subsubsection*{Residue of $\widetilde{G}(\lambda,k)$ at $\lambda_{\pm}(k)$}
Now we compute $\operatorname{Res} (\widetilde{G}_{k}(\lambda_{\pm}))$. We may write
\[
	\widetilde{G}(\lambda,k) = 1 + \sum_{\pm} \frac{a_{\pm}(\lambda, k)}{\lambda -\lambda_{\pm}(k)}
\]
where $a_{\pm}(\lambda,k)$ is some function which is holomorphic in $\{ \Re \lambda > 0 \}$.
Observe that
	\[
		a_{\pm}(k) :=  a_{\pm}(\lambda_{\pm}(k) , k) = \operatorname{Res} (\widetilde{G}_{k}(\lambda_{\pm}(k))) = \lim_{\lambda \rightarrow \lambda_{\pm}(k)} \frac{\lambda - \lambda_{\pm}(k)}{D(\lambda, k)}.
	\]
Using \eqref{eq:displap}, we obtain that $\partial_{\lambda}^{n} D(\lambda, k)$ is well-defined and finite on $\{ \Re \lambda \ge 0\}$ for $0 \le n < N_{0} -2$. It follows that $a_{\pm}(k)$ is of class $C^{N_{0} - 3}$ for $|k| \le \kappa_{0} + \delta_{0}$ by the smooth extension. In particular, we can compute from \eqref{eq:dispersion} that
	\[
		a_{\pm}(0) =  \frac{1}{\partial_{\lambda}D(\pm i \tau_{0} ,0) } = \pm \frac{i \tau_{0}}{2}.
	\]

\subsubsection*{Estimate of $|k|^{|\alpha|} \partial_{k}^{\alpha} \widehat{G}_{k}^{r}(t)$}
We estimate $|k|^{|\alpha|} \partial_{k}^{\alpha} \widehat{G}_{k}^{r}(t)$ for $ |\alpha| < N_{0} -2$. Namely, we prove that
\begin{align}
	\label{eq:est-Gkhat-kderiv}
	&\Big|  \partial_{k}^{\alpha} \widehat{G}_{k,0}(t) 
	- \sum_{\pm}  e^{\lambda_{\pm, 0} t} \operatorname{Res} (\partial_{k}^{\alpha} \widetilde{G}_{k,0}(\lambda); \lambda_{\pm, 0})  \Big| 
	\lesssim |k|^{3} \langle t \rangle^{|\alpha|} e^{-\theta_{0}|kt|}, \\
	\label{eq:est-Gkhat2-deriv}
	&\Big|  \partial_{k}^{\alpha}\widehat{G}_{k,1}(t) 
	- \sum_{\pm} e^{\lambda_{\pm} t} \operatorname{Res} (\partial_{k}^{\alpha} \widetilde{G}(\lambda,k); \lambda_{\pm})
	+ \sum_{\pm}  e^{\lambda_{\pm, 0} t} \operatorname{Res} (\partial_{k}^{\alpha}\widetilde{G}_{k,0}(\lambda); \lambda_{\pm, 0})   \Big|
	\lesssim |k|^{3-|\alpha|} \langle kt \rangle^{-N_{0} +3 + |\alpha|}.
\end{align}
for some $0 < \theta_{0} < \tau_{1}$. 
First, observe from \eqref{eq:Gk0} and \eqref{eq:Gk02} that
\[
	\left| \partial_{k}^{\alpha} \left(e^{\mu_{-, 0 } t} \operatorname{Res} (\widetilde{G}_{k,0}(\mu_{-,0})) \right) \right|
	\lesssim |k|^3 \langle t \rangle^{|\alpha|} e^{-\theta_{0} |kt|},
\]
which implies \eqref{eq:est-Gkhat-kderiv}.
Next, recall from \eqref{eq:remainderexpression} that
\[
	\mathcal{R}(\lambda,k) = |k|^4 \tau_{0}^2 + {16i|k|^2} \int_{0}^{\infty} e^{-\lambda t} N(t) dt 
\]
where $N(t)$ is given by \eqref{eq:Ndefinition} and \eqref{eq:Nestimate}. Hence
\[
	\left| |k|^{|\alpha|}\partial_{k}^{\alpha} \mathcal{R}(\lambda, k) \right|
	\lesssim |k|^4 + |k|^{|\alpha|} \int_{0}^{\infty} \left( |k|^{2-|\alpha|} + |k|^2 \langle t \rangle^{|\alpha|} \right) |k| \langle kt \rangle^{-N_{0} +1} \, dt
	\lesssim |k|^2
\]
for $|\alpha| < N_{0} -2$. Therefore $|k|^{|\alpha|} \partial_{k}^{\alpha} \mathcal{R}(\lambda, k)$ satisfy the same bounds as those of $\mathcal{R}(\lambda, k)$. One can repeat the same argument on $|k|^{|\alpha|} \partial_{k}^{\alpha} \partial_{\lambda}^{n} \widetilde{G}_{k, 1}(\lambda)$ to obtain \eqref{eq:est-Gkhat2-deriv}.
\end{proof}

\subsection{Green function in the physical space}

In this section, we bound the Green function in the physical space.
We will use Littlewood-Paley decomposition. That is, for any function $h$, we have
	\begin{equation}\label{eq:LP}
		h(x) = \sum_{q \in \mathbb{Z}} P_{q} h(x)	
	\end{equation}
where $P_{q}$ is the Littlewood-Paley projection on $[2^{q-1}, 2^{q+1}]$. Namely, $\widehat{P_{q}h}(k) = \widehat{h}(k) \varphi(k/2^{q})$ for some smooth cutoff function $\varphi \in [0,1]$ supported in $\{ \frac{1}{4} \le |k| \le 4 \}$ and equal to one in $\{ \frac{1}{2} \le |k| \le 2 \}$.
\begin{proposition}\label{prop-Greenx}
	Let $G(t,x)$ be the Green function of the linearized Hartree density \eqref{linHartree}. Then, there holds 
$$G(t,x)	
	= \delta(t, x) + \sum_{\pm} G^{osc}_{\pm}(t,x) + G^{r}(t, x),$$
	in which the oscillatory part satisfies
	\begin{equation}\label{Gosc-phys}
		\| G_{\pm}^{osc} \star_{x} f \|_{L^{p}_{x}} \lesssim t^{-d (1/2 - 1/p)} \| f \|_{L^{p'}_{x}}
	\end{equation}
	for any $p \in [2, \infty]$ satisfying $\frac{1}{p} + \frac{1}{p'} = 1$. 
The remainder part satisfies
	\begin{equation}
		\| \chi(i\partial_{x}) \partial_{x}^{n} G^{r}(t) \|_{L^{p}_{x}} \lesssim \langle t \rangle^{-3-d(1-1/p)-n},
	\end{equation}
	and
	\begin{equation}
		\| (1-\chi(i\partial_{x})) \partial_{x}^{n} G^{r}(t) \|_{L^{p}_{x}} \lesssim \langle t \rangle^{-N_{0}/2}
	\end{equation}
	for $0 \le n \le N_{0}/2$ and $p \in [1,\infty]$, where $\chi(k)$ is a smooth cutoff function supported in $\{ |k| \le \kappa_{0} + \delta_{0} \}$.
\end{proposition}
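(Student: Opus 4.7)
The plan is to invert the Fourier-side representations of Propositions \ref{prop:stable} and \ref{prop:GreenFourier1} piece by piece. First I would glue the two decompositions: pick $\chi(k)$ supported in $\{|k|\le\kappa_0+\delta_0\}$ with $\chi\equiv 1$ on $\mathrm{supp}\,a_\pm$. Proposition \ref{prop:GreenFourier1} applied to $\chi(k)\widehat{G}_k(t)$ produces the delta, the two oscillatory pieces $e^{\lambda_\pm(k)t}a_\pm(k)$, and a low-frequency remainder; Proposition \ref{prop:stable} applied to $(1-\chi(k))\widehat{G}_k(t)$ produces the complementary delta contribution and a high-frequency remainder. Their inverse Fourier transforms give the stated decomposition of $G(t,x)$, with $G^r$ being the sum of the two remainders.

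For the oscillatory bound \eqref{Gosc-phys}, I would run the standard Klein-Gordon $TT^*$ argument. The $L^2\to L^2$ bound is immediate from Plancherel because $|e^{\lambda_\pm(k)t}a_\pm(k)|\lesssim 1$ on the support of $a_\pm$, using $\Re\lambda_\pm(k)\le 0$ from Theorem \ref{theo-mainLandau}. For the $L^1\to L^\infty$ kernel bound, write
\[
G^{osc}_\pm(t,x)=\int e^{i(x\cdot k\pm t\tau_*(k))}\,e^{t\,\Re\lambda_\pm(k)}a_\pm(k)\,dk
\]
and apply stationary phase in $k$. The Hessian $\nabla_k^2\tau_*(k)$ is radial with eigenvalues $\tau_*''(|k|)$ (simple) and $\tau_*'(|k|)/|k|$ ($(d{-}1)$-fold), both pinched in $[c_0,C_0]$ by \eqref{KGdisp}; so it is uniformly non-degenerate on the compact support of $a_\pm$, yielding $|G^{osc}_\pm(t,x)|\lesssim \langle t\rangle^{-d/2}$ uniformly in $x$. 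Riesz-Thorin interpolation then gives \eqref{Gosc-phys} for every $p\in[2,\infty]$.

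For the remainder I would use a dyadic Littlewood-Paley decomposition $\sum_q P_q$ together with the $k$-derivative bounds already furnished by the two propositions. For $\chi(i\partial_x)\partial_x^n G^r$, Proposition \ref{prop:GreenFourier1} gives, on each shell $|k|\sim 2^q$ inside $\mathrm{supp}\,\chi$, the pointwise estimate $|\partial_k^\alpha\widehat{G}^r_k(t)|\lesssim 2^{q(3-|\alpha|)}\langle 2^qt\rangle^{-N_0+3+|\alpha|}$ for $|\alpha|<N_0-2$. Turning $k$-derivatives into spatial weights via $x^\beta P_q\partial_x^n G^r(t,x)=\int e^{ix\cdot k}\partial_k^\beta[(ik)^n\varphi(k/2^q)\widehat{G}^r_k(t)]\,dk$ produces a weighted $L^\infty_x$ estimate that interpolates to any $L^p_x$ norm, and summing the (finite) dyadic series yields $\langle t\rangle^{-3-d(1-1/p)-n}$ after balancing the two regimes $2^qt\lesssim 1$ and $2^qt\gg 1$ at the threshold. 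The high-frequency remainder $(1-\chi(i\partial_x))\partial_x^n G^r$ is handled analogously using the bound $|\widehat{G}^r_k(t)|\lesssim |k|^{-1}\langle kt\rangle^{-N_0+3}$ of Proposition \ref{prop:stable} and the companion derivative estimates \eqref{eq:Gtildederiv-largek}; choosing the number of integrations by parts comparable to $N_0/2$ yields the claimed $\langle t\rangle^{-N_0/2}$ tail.

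The main obstacle is the dispersive kernel bound in the oscillatory step. The symbol $a_\pm(k)$ and the dispersion $\lambda_\pm(k)$ are only finitely many times differentiable across $|k|=\kappa_0$, so the stationary phase argument must be organized to consume only the available regularity (governed by $N_0$). Moreover, in the Landau damping window $\kappa_0\le|k|\le\kappa_0+\delta_0$ the amplitude $e^{t\,\Re\lambda_\pm(k)}$ is a nontrivial symbol whose $k$-derivatives pick up polynomial factors in $t$; one must check that these are absorbed by the $t^{-d/2}$ gain from stationary phase, exploiting the compactness of $\mathrm{supp}\,a_\pm$ and the fact that $\Re\lambda_\pm(k)\le 0$ so that $e^{t\,\Re\lambda_\pm(k)}$ is itself uniformly bounded.
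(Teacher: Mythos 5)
Your proposal is correct and follows essentially the same route as the paper: $L^2$ boundedness plus a stationary-phase $L^1\to L^\infty$ kernel bound (using the Hessian determinant $|(\tau_*'/|k|)^{d-1}\tau_*''|\gtrsim 1$ from \eqref{KGdisp}) and interpolation for $G^{osc}_\pm$, and Littlewood--Paley with $k$-derivative-to-spatial-weight conversion of the bounds \eqref{eq:Gr-lowfreq}, \eqref{eq:Gr-highfreq}, \eqref{eq:Gtildederiv-largek} for $G^r$. The stationary-phase subtlety you flag (finite regularity of $a_\pm,\lambda_\pm$ and the damped factor $e^{t\Re\lambda_\pm(k)}$) is precisely what the paper disposes of by invoking \cite[Proposition 5.4]{HKNR3} and \cite[Lemma 3.1]{RS}, so your treatment is at the same level of detail.
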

\begin{proof}
	Note that the oscillatory Green function is given by
	\[
		G_{\pm}^{osc} (t,x) =  \int_{\mathbb{R}^d} e^{\lambda_{\pm}(k) t + ik \cdot x } a_{\pm}(k) \, dk,
	\]
where $a_{\pm}(k)$ is sufficiently smooth and supported in $\{ |k| \le \kappa_{0} + \delta_{0} \}$. Recalling that $\Re \lambda_{\pm}(k) \le 0$, we obtain the $L^2$ estimate, namely,
\[
	\| G_{\pm}^{osc} \star_{x} f \|_{L^{2}_{x}} \lesssim  \| f \|_{L^{2}_{x}}.
\]
The dispersive estimate
\[
	\| G_{\pm}^{osc} \star_{x} f \|_{L^{\infty}_{x}} \lesssim  t^{-d/2}\| f \|_{L^{1}_{x}}
\]
follows from the stationary phase analysis, see the proof of \cite[Proposition 5.4]{HKNR3} or \cite[Lemma 3.1]{RS}, upon noting that the Hessian determinant $| \det D^2 \tau_{*} (k) |  = | \left( \frac{\tau_{*}'}{|k|} \right)^{d-1} \tau_{*}'' | \gtrsim 1$ by Theorem \ref{theo-LangmuirE}. The general estimates \eqref{Gosc-phys} follow from the interpolation.

Next we deal with $G^{r}(t)$. Recalling \eqref{eq:Gr-lowfreq}, we have
\[
	| \chi(i \partial_{x}) \partial_{x}^{n} G^{r}(t) |
	\lesssim \int_{\{|k| \lesssim 1 \}} |k|^{n} |\widehat{G}_{k}^{r}(t) | \, dk 
	\lesssim \int_{\{|k| \lesssim 1 \}} |k|^{n+3} \langle kt \rangle^{-N_{0} +3} \, dk \lesssim t^{-n-3-d},
\]
which gives the $L^{\infty}$ bound in the low frequency case. Similarly, using \eqref{eq:Gr-highfreq}, we have
\[
	| (1-\chi(i\partial_{x})) \partial_{x}^{n} G^{r}(t) |
	\lesssim \int_{\{|k| \gtrsim 1 \}} |k|^{n} |\widehat{G}_{k}^{r}(t) | \, dk
	\lesssim \int_{\{|k| \gtrsim 1 \}} |k|^{n-1} \langle kt \rangle^{-N_{0} +3} \, dk
	\lesssim \langle t \rangle^{-N_{0}/2}
\]
provided that $N_{0} \ge 6$. This gives the $L^{\infty}$ bound in the high frequency case. To obtain the $L^1$ bound, observe that
\begin{equation*}
	\begin{aligned}
		\| (1- \chi(i\partial_{x})) \partial_{x}^{n} G^{r}(t) \|_{L^{1}_{x}}^2
		&\lesssim \sum_{|\alpha| \le \lfloor d/2 \rfloor +1} \int_{\{ |k| \gtrsim 1 \}} |k|^{2n} |\partial_{k}^{\alpha} \widehat{G}_{k}^{r} (t) |^2 \, dk  \\
		&\lesssim \sum_{|\alpha| \le \lfloor d/2 \rfloor +1} \int_{\{ |k| \gtrsim 1 \}} |k|^{2n-2-2|\alpha|} \langle kt \rangle^{-2N_{0} +6 + 2 |\alpha|} \, dk 
		\lesssim \langle t \rangle^{-N_{0}}
	\end{aligned}
\end{equation*}
provided that $N_{0} \ge d +7$. For the low frequency part, we use the Littlewood-Paley decomposition \eqref{eq:LP}. Noting that $|k| \lesssim 1$, we can write
\[
	\chi(i \partial_{x}) G^{r}(t,x) = \sum_{q \le C} P_{q}[ G^{r}(t,x)]	
\]
for some constant $C$ where
\begin{equation*}
\begin{aligned}
	P_{q}[G^{r}(t,x)] 
	&= \int_{\{2^{q-2} \le |k| \le 2^{q+2}\}} e^{ik \cdot x } \widehat{G}^{r}(t,k) \varphi(k/2^q) dk\\
	&= 2^{d} q \int_{ \{ 1/4 \le |\widetilde{k}| \le 4 \}} e^{i \widetilde{k} \cdot 2^{q}x} \widehat{G}^{r}(t, 2^{q} \widetilde{k}) \varphi(\widetilde{k}) \, d \widetilde{k}
\end{aligned}
\end{equation*}
Observe that
\begin{equation*}
\begin{aligned}
		| \partial_{\widetilde{k}}^{\alpha} \widehat{G}^{r}(t, 2^{q}\widetilde{k}) |
		\lesssim 2^{q|\alpha|} | \partial_{k}^{\alpha} \widehat{G}^{r}(t, 2^{q}\widetilde{k}) |
		\lesssim 2^{3q} \langle 2^{q} t \rangle^{-N_{0} +3 + |\alpha|}
\end{aligned}
\end{equation*}
for $1/4 \le |\widetilde{k}| \le 4$ and $q \le C$. Integrating by parts, it follows that
\begin{equation*}
\begin{aligned}
	|P_{q} [G^{r}(t,x)] |
	&\lesssim 2^{dq} \left| \int e^{i \widetilde{k} \cdot 2^{q} x} \widehat{G}^{r} ( t, 2^{q} \widetilde{k}) \varphi(\widetilde{k}) \, d \widetilde{k} \right| \\
	&\lesssim 2^{dq} \int \langle 2^{q} x \rangle^{-|\alpha|} \left| \partial_{\widetilde{k}}^{\alpha} (\widehat{G}^{r}(t, 2^{q}\widetilde{k}) \varphi(\widetilde{k}) ) \right| \, d \widetilde{k} \\
	&\lesssim 2^{q(d+3)} \langle 2^{q} x \rangle^{-|\alpha|}  \langle 2^{q} t \rangle^{-N_{0} +3 + |\alpha|}.
\end{aligned}
\end{equation*}
Taking $|\alpha| = 4$, we get
\begin{equation*}
\begin{aligned}
	\| \chi(i\partial_{x}) \partial_{x}^{n} G^{r}(t) \|_{L^{1}_{x}}
	\lesssim \sum_{q \le C} 2^{nq} \| P_{q}[G^{r}(t)]\|_{L^{1}_{x}} 
	\lesssim \sum_{q \le C} 2^{n(q+3)} \langle 2^{q}t \rangle^{-N_{0} +d +4} \lesssim t^{-n-3},
\end{aligned}
\end{equation*}
provided that $N_{0} > d+5$.
To sum up,
\[
	\| \chi(i \partial_{x}) \partial_{x}^{n} G^{r}(t) \|_{L^{p}_{x}} \lesssim \langle t \rangle^{-3 -d (1- 1/p) - n }
\]
for $p \in [1, \infty]$.
\end{proof}

\section{Linear decay}

\subsection{Phase mixing for free gases}\label{sec-freeHartree}

In this section, we prove the following dispersion for the free Hartree dynamics. 

\begin{proposition}\label{prop-mixrho0} 
Let $\gamma_{0}$ be the initial matrix operator whose integral kernel $\gamma_0(x,y)$ satisfies 
$$ \sum_{|\alpha|+|\beta|\le N_3 }\| (1+|x|+|y|)^{N_2} \partial_x^\alpha \partial_y^\beta\gamma_0\|_{L^1_{x,y}} < \infty
$$
for some $N_2,N_3>d$. 
Then, for any $n,\alpha$ so that $n+|\alpha|< N_2 - d$, the density $\rho^0(t,x)$ of the free Hartree dynamics $\gamma^0(t) = e^{it\Delta} \gamma_{0} e^{-it \Delta}$ satisfies 
\begin{equation}\label{dispHLp}
		\| \partial_{t}^{n} \partial_{x}^{\alpha} \rho^{0}(t) \|_{L^{p}_{x}} \lesssim t^{-n-|\alpha| - d(1-1/p)} 
\end{equation}
for $p\in [1,\infty]$. 
\end{proposition}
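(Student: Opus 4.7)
The plan is to derive an explicit Wigner-type representation of $\rho^0(t,x)$ that exposes the dispersive scaling, and then to estimate $L^p$ norms by a change of variables. From the free Schr\"odinger kernel $K_t(x,y) = (4\pi it)^{-d/2}e^{i|x-y|^2/(4t)}$ and the identity $\gamma^0(t,x,y) = \iint K_t(x,z) \overline{K_t(y,w)}\gamma_0(z,w)\,dz\,dw$, one obtains
\[
\rho^0(t,x) = \frac{1}{(4\pi t)^d}\iint e^{i(|x-z|^2 - |x-w|^2)/(4t)} \gamma_0(z,w)\, dz\, dw.
\]
The substitutions $z = x+m+r$, $w = x+m-r$ followed by the dispersive rescaling $m = ty$ convert the oscillatory phase into $e^{iy\cdot r}$, entirely free of $t$, and yield
\[
\rho^0(t,x) = \frac{1}{(2\pi)^d}\int W(x+ty, y)\, dy = \frac{1}{(2\pi t)^d}\int W(\zeta, (\zeta-x)/t)\, d\zeta,
\]
where $W(\zeta, y) := \int e^{iy\cdot r}\gamma_0(\zeta+r, \zeta-r)\, dr$ is a $t$-independent Wigner-type transform of $\gamma_0$, and the second equality uses $\zeta = x+ty$.

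Applying Minkowski's inequality in $\zeta$ and the change of variable $y = (\zeta-x)/t$ in the inner $L^p_x$-norm (which produces a Jacobian $t^{d/p}$) yields directly
\[
\|\rho^0(t)\|_{L^p_x} \le (2\pi)^{-d}\, t^{-d(1-1/p)}\, \|W\|_{L^1_\zeta L^p_y}.
\]
The mixed-norm $\|W\|_{L^1_\zeta L^p_y}$ is a fixed $t$-independent quantity, which I bound at the endpoints via (i) $|W(\zeta, y)| \le \int |\gamma_0(\zeta+r, \zeta-r)|\, dr$ for $p=\infty$, and (ii) integration by parts in $r$, giving $|y^\alpha W(\zeta, y)| \le \int|\partial_r^\alpha \gamma_0(\zeta+r, \zeta-r)|\, dr$ with $|\alpha| = N_3 > d$ for the $y$-decay at $p=1$. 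In both cases the change of variable $z = \zeta+r$, $w = \zeta-r$ converts the resulting integral into $L^1$-norms of $\gamma_0$ and its derivatives, controlled by the hypothesis. Interpolation then handles all $p \in [1,\infty]$.

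For spatial derivatives, differentiating under the integral gives
\[
\partial_x^\alpha \rho^0(t,x) = \frac{(-1)^{|\alpha|}}{(2\pi)^d\,t^{d+|\alpha|}}\int (\partial_y^\alpha W)(\zeta, (\zeta-x)/t)\, d\zeta,
\]
with $\partial_y^\alpha W(\zeta, y) = \int (ir)^\alpha e^{iy\cdot r}\gamma_0(\zeta+r, \zeta-r)\, dr$; the weight $|r|^{|\alpha|} \lesssim (1+|z|+|w|)^{|\alpha|}$ consumes $|\alpha|$ of the $N_2$ available moments on $\gamma_0$, with $d$ further moments spent on the $L^p_y$ integrability, producing the constraint $|\alpha| < N_2 - d$. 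Time derivatives yield an extra $t^{-1}$ factor per $\partial_t$, either via the continuity equation $\partial_t\rho^0 + \nabla\cdot\bj^0 = 0$ (with the current $\bj^0$ admitting an analogous Wigner representation), or by direct differentiation: each $\partial_t$ either produces $-dt^{-1}\rho^0$ or applies the Euler-type operator $y\cdot\partial_y$ to $W$, both of which cost one moment. Iterating gives the claim for $n+|\alpha|<N_2-d$. The main obstacle is isolating the correct Wigner representation: the substitution $m=ty$ is precisely what eliminates the $1/t$ inside the oscillatory phase and reduces the dispersive estimate to mixed Lebesgue norms of a fixed $t$-independent function $W$, from which the rate $t^{-d(1-1/p)}$ emerges cleanly from the Jacobian of the final change of variable.
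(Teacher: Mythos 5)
Your proposal is correct in substance but proceeds by a genuinely different route from the paper. The paper works entirely on the Fourier--Laplace side: it writes $\widehat{\rho}^0(t,k)=2^{-d}\int e^{-itk\cdot v}\widehat{\gamma_0}\bigl(\tfrac{k+v}{2},\tfrac{k-v}{2}\bigr)dv$, obtains $|\partial_t^n\widehat{\rho}^0(t,k)|\lesssim |k|^n\langle kt\rangle^{-N_2}$ from the smoothness of $\widehat{\gamma_0}$ (i.e.\ the moments of $\gamma_0$) together with the weights $(|k|+|p|)^{N_3}$ for $v$-integrability, then gets $L^\infty_x$ by integrating in $k$, gets $L^1_x$ via $|k|^{|\alpha|}\partial_k^\alpha\widehat{\rho}^0$ bounds and a Littlewood--Paley argument, and interpolates. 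You instead derive the physical-space transport (Wigner) representation $\rho^0(t,x)=(2\pi t)^{-d}\int W(\zeta,(\zeta-x)/t)\,d\zeta$ and read off the rate $t^{-d(1-1/p)}$ for all $p\in[1,\infty]$ in one stroke from Minkowski plus the Jacobian of $y=(\zeta-x)/t$, with the $t$-independent mixed norm $\|W\|_{L^1_\zeta L^p_y}$ controlled at the endpoints by the weighted $L^1$ hypothesis (moments for $p=\infty$, integration by parts in $r$, i.e.\ the $N_3>d$ derivatives, for $p=1$). This is the dual picture of the paper's computation and is arguably cleaner, since it avoids the separate $L^1$/Littlewood--Paley step; your kernel computation, the substitution $m=ty$, and the derivative formula $\partial_x^\alpha\rho^0=(-1)^{|\alpha|}(2\pi)^{-d}t^{-d-|\alpha|}\int(\partial_y^\alpha W)(\zeta,(\zeta-x)/t)\,d\zeta$ all check out.

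Two bookkeeping remarks. First, in your scheme the $L^1_y$ (hence $L^p_y$) integrability of $W$ and $\partial_y^\alpha W$ is paid by the $N_3>d$ derivatives of $\gamma_0$ through integration by parts in $r$, not by ``$d$ further moments''; the $N_2$ moments are what the weights $|r|^{|\alpha|}$ and the factors $(y\cdot\partial_y)^n$ coming from $\partial_t^n$ consume. Your stated constraint $|\alpha|+n<N_2-d$ is therefore sufficient but the attribution in your accounting is off; as written your argument actually needs only $n+|\alpha|\le N_2$ on the moment side. Second, for $n\ge 1$ the time derivatives also consume roughly $n$ additional $r$-derivatives of $\gamma_0$ beyond the $>d$ used for $y$-integrability (converting the $y$-powers from $(y\cdot\partial_y)^n$ by integration by parts), which is not covered by the bare hypothesis $N_3>d$; this gap is shared with, and of exactly the same nature as, the paper's own proof, which needs finiteness of the $(|k|+|p|)^{n+N_3}$-weighted norms of $\widehat{\gamma_0}$. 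Neither issue affects the validity of your approach, but the derivative count for $\partial_t^n$ should be stated honestly.
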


\begin{proof}
Let $\gamma^{0}(t,x,y)$ be the integral kernel of $\gamma^0(t) = e^{it\Delta} \gamma_{0} e^{-it \Delta}$. It follows that the Fourier transform of $\gamma^0(t,x,y)$ can be computed by 
	\[
		\widehat{\gamma}^{0}(t,k,p) = e^{-it(|k|^2 -|p|^2)} \widehat{\gamma_{0}}(k,p),
	\]
while the associated density is 
	\begin{equation}\label{rho0-exp1}
		\begin{aligned}
			\widehat{\rho}^{0}(t,k) 
			&= \int \widehat{\gamma}^{0}(t,k-p,p)\, dp 
			= \int e^{-itk \cdot (k-2p)} \widehat{\gamma_{0}} (k-p, p) \, dp \\
			&= 2^{-d} \int e^{-it k \cdot v} \widehat{\gamma_{0}} \left( \frac{k+v}{2}, \, \frac{k-v}{2} \right) \, dv
		\end{aligned}
	\end{equation}
where we have substituted $v= k-2p$ in the last inequality. 
From the regularity of $\widehat{\gamma_{0}}$, it follows that
	\[
		|\widehat{\rho}^{0}(t,k)| \lesssim \langle kt \rangle^{-N_{2}} \sum_{|\alpha| + |\beta| \le N_{2}} \| (|k|+|p|)^{N_3} \partial_{k}^{\alpha} \partial_{p}^{\beta} \widehat{\gamma_{0}} \|_{L^{\infty}_{k,p}} ,
	\]
for any $N_2$ and for $N_3>d$. In addition, $t$-derivatives gain a factor of $|k|$, but lose a linear growth in $|v|\le |k+v| + |k-v|$, yielding  
	\[
		|\partial_t^n\widehat{\rho}^{0}(t,k)| \lesssim |k|^n\langle kt \rangle^{-N_{2}} \sum_{|\alpha| + |\beta| \le N_{2}} \| (|k| + |p|)^{n+N_3}\partial_{k}^{\alpha} \partial_{p}^{\beta} \widehat{\gamma_{0}} \|_{L^{\infty}_{k,p}} ,
	\]
for any $n\ge 0$. Thus, taking $N_2>d$, we thus have
	\[
		\begin{aligned}
		|  \rho^{0} (t, x) | &\lesssim \int  \langle kt \rangle^{-N_{2}} \sum_{|\alpha| + |\beta| \le N_{2}}  \| (|k|+|p|)^{N_3} \partial_{k}^{\alpha} \partial_{p}^{\beta} \widehat{\gamma_{0}} \|_{L^{\infty}_{k,p}}\, dk  
		\\&\lesssim t^{-d} \sum_{|\alpha| + |\beta| \le N_{2}}  \| (|k|+|p|)^{N_3} \partial_{k}^{\alpha} \partial_{p}^{\beta} \widehat{\gamma_{0}} \|_{L^{\infty}_{k,p}}.
\end{aligned}	\]
Similarly, for any $\alpha,n$ and for $N_2> d + n + |\alpha|$, we obtain
	\[
		\| \partial_{t}^{n} \partial_{x}^{\alpha} \rho^{0}(t) \|_{L^{\infty}_{x}} \lesssim t^{-d-n-|\alpha|} 
		\sum_{|\alpha| +|\beta| \le N_{2}} \| (|k| + |p|)^{n+N_3}\partial_{k}^{\alpha} \partial_{p}^{\beta} \widehat{\gamma_{0}} \|_{L^{\infty}_{k,p}},
	\]
	which proves \eqref{dispHLp} for $p=\infty$. Note that the right hand-side is finite by assumption and the fact that 
	$\| \widehat{g}\|_{L^{\infty}_{k,p}} \le \| g\|_{L^1_{x,y}}$	for any function $g(x,y)$. 
	
As for $L^1$ estimates, from \eqref{rho0-exp1}, we compute 
$$
		\begin{aligned}
			|k|\nabla_k\widehat{\rho}^{0}(t,k) 
			&= -2^{-d} i |k|t \int ve^{-it k \cdot v} \widehat{\gamma_{0}} \left( \frac{k+v}{2}, \, \frac{k-v}{2} \right) \, dv 
			\\& + 2^{-d-1} |k|\int e^{-it k \cdot v} \Big( \nabla_k\widehat{\gamma_{0}} +  \nabla_p\widehat{\gamma_{0}} \Big)\left( \frac{k+v}{2}, \, \frac{k-v}{2} \right) \, dv.
		\end{aligned}
$$
A similar calculation holds for $|k|^{|\alpha|}\partial_k^\alpha$, for any $|\alpha|\ge 0$. Thus, estimating the integrals similarly as done above, we obtain for $t\ge 1$, 
	\[
		|k|^{|\alpha|}|\partial_k^\alpha\widehat{\rho}^{0}(t,k)| \lesssim \langle kt \rangle^{-N_{2}+|\alpha|} \sum_{|\alpha| + |\beta| \le N_{2}}\| (|k| + |p|)^{N_3}\partial_{k}^{\alpha} \partial_{p}^{\beta} \widehat{\gamma_{0}} \|_{L^{\infty}_{k,p}},
	\]
from which we obtain the $L^1$ estimates, following the analysis done for $G^r(t,x)$ in the proof of Proposition \ref{prop-Greenx}.  Finally, the decay estimates \eqref{dispHLp} in $L^p$ norms follow by interpolation between those in $L^1$ and $L^\infty$ norms. 
\end{proof}

\subsection{Density representation}
From \eqref{resolvent}, we have $\rho(t,x) 
	= (G \star_{t,x} \rho^{0} )(t,x) $. Therefore, using Proposition \ref{prop-Greenx}, we may write \begin{equation}\label{densdecomp}
\begin{aligned}
	\rho(t,x) 
	&= \rho^{0}(t,x) + \sum_{\pm} G^{osc}_{\pm} \star_{t,x} \rho^{0} (t,x) + G^{r} \star_{t,x} \rho^{0}(t,x) 
\end{aligned}
\end{equation}
As it turns out that $\rho^{0}$ does not have enough decay to obtain the decay for the oscillatory part of the density through the spacetime convolution, we shall integrate by parts in time to make use of the extra decay from the derivatives of $\rho^0$, see Proposition \ref{prop-mixrho0}. Precisely, we have the following.
\begin{proposition}\label{prop-densityrep}
	Let $\rho(t,x)$ be the density of the linearized Hartree equations \eqref{linHartree}. It can be written as
		\begin{equation}
			\rho = \sum_{\pm} \rho^{osc}_{\pm} + \rho^{r}
		\end{equation}
	with
		\begin{equation}\label{eq:rep}
		\begin{aligned}
			\rho^{osc}_{\pm}(t,x) 
			&= G^{osc}_{\pm}(t) \star_{x}
				\left[
					\frac{1}{\lambda_{\pm}(i\partial_{x})} \rho^{0}(0) + \frac{1}{\lambda_{\pm}(i\partial_{x})^2} \partial_{t} \rho^{0}(0)
				\right]
				+ G^{osc}_{\pm} \star_{t,x} \frac{1}{\lambda_{\pm}(i\partial_{x})^2} \partial_{t}^2 \rho^{0} \\
			\rho^{r}(t,x)
			&= G^r \star_{t,x} \rho^{0} + \mathcal{P}_{2}(i\partial_{x}) \rho^{0} + \mathcal{P}_{4}(i \partial_{x}) \partial_{t} \rho^{0}
		\end{aligned}
		\end{equation}
	where $G^{osc}_{\pm}(t,x)$ and $G^{r}(t,x)$ are defined in Propositions \ref{prop:stable} and \ref{prop:GreenFourier1}, and $\lambda_{\pm}(k)$ in Theorems \ref{theo-LangmuirE} and \ref{theo-Landau}.
	Moreover, $\mathcal{P}_{2}(i\partial_{x})$ and $\mathcal{P}_{4}(i\partial_{x})$ Fourier multipliers, where $\mathcal{P}_{2}(k)$ and $\mathcal{P}_{4}(k)$ are sufficiently smooth and satisfy
	\begin{equation}\label{bd-P2P4}
		|\mathcal{P}_{2}(k)| + |\mathcal{P}_{4}(k)| \lesssim |k|^2 \langle k \rangle^{-2}, \qquad k \in \mathbb{R}^d.
	\end{equation}
	Furthermore, $\mathcal{P}_{4}(k)$ is compactly supported in $\{ |k| \le \kappa_{0} + \delta_{0} \}$.
\end{proposition}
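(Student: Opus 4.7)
The plan is to start from the inverse-Laplace form of the resolvent identity \eqref{resolvent},
\[
\widehat{\rho}(t,k) = \int_0^t \widehat{G}_k(t-s)\,\widehat{\rho}^{0}(s,k)\,ds,
\]
and insert the Green function decomposition from Propositions \ref{prop:stable} and \ref{prop:GreenFourier1}. Combining those two results globally, one may write
\[
\widehat{G}_k(t) = \delta(t) + \sum_\pm a_\pm(k) e^{\lambda_\pm(k) t} + \widehat{G}^r_k(t),
\]
with $a_\pm(k)$ smooth and supported in $\{|k|\le \kappa_0 + \delta_0\}$. The $\delta(t)$-piece produces a term $\widehat{\rho}^{0}(t,k)$, and the $\widehat{G}^r_k$-piece becomes the $G^r\star_{t,x}\rho^0$ contribution of the stated remainder. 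Only the oscillatory convolution requires non-trivial work, since the slow pointwise decay of $\rho^0$ must be traded in for time-smoothness of $\rho^0$ via integration by parts.

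Using the identity $e^{\lambda_\pm(k)(t-s)} = -\lambda_\pm(k)^{-1}\partial_s e^{\lambda_\pm(k)(t-s)}$, and noting that $|\lambda_\pm(k)|\sim \tau_*(k) \gtrsim 1$ on the support of $a_\pm$ by Theorems \ref{theo-LangmuirE} and \ref{theo-Landau}, I integrate by parts twice in $s$ to obtain
\begin{align*}
\int_0^t a_\pm(k) e^{\lambda_\pm(k)(t-s)} \widehat{\rho}^{0}(s,k)\,ds
&= a_\pm(k) e^{\lambda_\pm(k) t}\!\left[\frac{\widehat{\rho}^{0}(0,k)}{\lambda_\pm(k)} + \frac{\partial_t\widehat{\rho}^{0}(0,k)}{\lambda_\pm(k)^2}\right] \\
&\quad + \frac{a_\pm(k)}{\lambda_\pm(k)^2}\int_0^t e^{\lambda_\pm(k)(t-s)}\partial_s^2\widehat{\rho}^{0}(s,k)\,ds \\
&\quad - \frac{a_\pm(k)}{\lambda_\pm(k)}\widehat{\rho}^{0}(t,k) - \frac{a_\pm(k)}{\lambda_\pm(k)^2}\partial_t\widehat{\rho}^{0}(t,k).
\end{align*}
Reading the first two lines through $\widehat{G^{osc}_\pm}(t,k) = a_\pm(k) e^{\lambda_\pm(k) t}$ reproduces exactly the Fourier form of $\rho^{osc}_\pm(t,x)$ given in \eqref{eq:rep}. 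The third line consists of boundary terms at $s=t$; summed over $\pm$ and combined with the $\delta$-contribution $\widehat{\rho}^{0}(t,k)$, they assemble into the remainder with Fourier multipliers
\[
\mathcal{P}_2(k) = 1 - \sum_\pm \frac{a_\pm(k)}{\lambda_\pm(k)},\qquad \mathcal{P}_4(k) = -\sum_\pm \frac{a_\pm(k)}{\lambda_\pm(k)^2}.
\]

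The only non-trivial step is verifying \eqref{bd-P2P4}. Compact support of $\mathcal{P}_4$ in $\{|k|\le\kappa_0+\delta_0\}$ is immediate from that of $a_\pm$, and for $|k|\ge \kappa_0+\delta_0$ one has $\mathcal{P}_2(k) \equiv 1 \lesssim |k|^2\langle k\rangle^{-2}$. The delicate point is the quadratic vanishing at the origin, which hinges on a cancellation between the two branches. Using $a_\pm(0)=\pm i\tau_0/2$ from Proposition \ref{prop:GreenFourier1} and $\lambda_\pm(0)=\pm i\tau_0$ from Theorem \ref{theo-LangmuirE}, a direct computation gives $\sum_\pm a_\pm(0)/\lambda_\pm(0) = 1$ and $\sum_\pm a_\pm(0)/\lambda_\pm(0)^2 = 0$, so $\mathcal{P}_2(0) = \mathcal{P}_4(0) = 0$. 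To upgrade this to quadratic vanishing, I invoke the conjugation symmetry $\lambda_-(k) = -\overline{\lambda_+(k)}$ and $a_-(k) = \overline{a_+(k)}$, inherited from $D(\overline{\lambda},k)=\overline{D(\lambda,k)}$, together with the even-in-$|k|$ expansion \eqref{exp-lambdapm} of $\lambda_\pm$ and the analogous structure of $a_\pm$ via $a_\pm(k) = 1/\partial_\lambda D(\lambda_\pm(k),k)$; these imply that each $\sum_\pm a_\pm(k)/\lambda_\pm(k)^n$ is a real, smooth function of $|k|^2$ near $k=0$, so its Taylor expansion begins at order $|k|^2$. The principal obstacle is precisely this cancellation between the $\pm$ branches: without it, $\mathcal{P}_2$ would behave like a non-zero constant at $k=0$, and the low-frequency part of the remainder would lose the two orders of decay needed for the sharper estimate on $\rho^r$ in Theorem \ref{theo-main}.
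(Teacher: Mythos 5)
Your proposal is essentially the paper's own proof: insert the Green function decomposition into the Duhamel/convolution formula, integrate by parts twice in time against $e^{\lambda_\pm(k)(t-s)}$, keep the boundary terms at $s=0$ and the $\partial_t^2\rho^0$ convolution as $\rho^{osc}_\pm$, and absorb the $s=t$ boundary terms together with the $\delta$-contribution into the multipliers $\mathcal{P}_2(k)=1-\sum_\pm a_\pm(k)/\lambda_\pm(k)$ and $\mathcal{P}_4(k)=-\sum_\pm a_\pm(k)/\lambda_\pm(k)^2$; this matches \eqref{eq:rep} and the paper's construction (where $\mathcal{P}_2$ is just written as $\widetilde{\mathcal{P}}_2+(1-\widetilde{\mathcal{P}}_3)$). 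One remark on your justification of \eqref{bd-P2P4}: the conjugation symmetry should read $\lambda_-(k)=\overline{\lambda_+(k)}$ (and $a_-=\overline{a_+}$), not $\lambda_-(k)=-\overline{\lambda_+(k)}$, since $D(\overline{\lambda},k)=\overline{D(\lambda,k)}$; in any case this detour is unnecessary, because the quoted expansions $a_\pm(k)=\pm\tfrac{i\tau_0}{2}+\mathcal{O}(|k|^2)$ (Proposition \ref{prop:GreenFourier1}) and $\lambda_\pm(k)=\pm i\tau_0+\mathcal{O}(|k|^2)$ (Theorem \ref{theo-LangmuirE}) already yield $\sum_\pm a_\pm/\lambda_\pm=1+\mathcal{O}(|k|^2)$ and $\sum_\pm a_\pm/\lambda_\pm^2=\mathcal{O}(|k|^2)$ by direct substitution, which is exactly how the paper argues.
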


\begin{proof}
	Since $\lambda_{\pm}(k)$ is supported in $\{ |k| \le \kappa_{0} + \delta_{0} \}$, the proposition in the case when $\{|k| \gtrsim 1\}$ follows from \eqref{densdecomp} .
	When $|k| \lesssim 1$, we integrate $G^{osc}_{\pm} \star_{t,x} \rho^{0}$ by parts in time to use the oscillation of $G^{osc}_{\pm}(t,x)$. Namely, we get
	\[
		G^{osc}_{\pm} \star_{t,x} \rho^{0}
		= \frac{1}{\lambda_{\pm}(i \partial_{x})} 
			\left[
				G^{osc}_{\pm}(t) \star_{x} \rho^0 (0) - G^{osc}_{\pm}(0) \star_{x} \rho^{0}(t) + G^{osc}_{\pm} \star_{t,x} \partial_{t} \rho^{0}
			\right]
	\]
	Using the fact that $\lambda_{\pm}(0) = \pm i \tau_{0}$ and $a_{\pm}(0) = \pm \frac{i \tau_{0}}{2}$, we get
	\[
		\sum_{\pm} \frac{1}{\lambda_{\pm}(i\partial_{x})} G^{osc}_{\pm}(0,x) 
		= \sum_{\pm} \int e^{ik\cdot x} \frac{a_{\pm}(k)}{\lambda_{\pm}(k)} \, dk 
		= \int_{\{|k| \lesssim 1\}} e^{ik \cdot x } (1 + \mathcal{O}(|k|^2)) \, dk.
	\]
	Hence
	\[
		\sum_{\pm} \frac{1}{\lambda_{\pm}(i \partial_{x})} G^{osc}_{\pm}(0) \star_{x} \rho^{0}(t) 
		= \int_{\{|k| \lesssim 1\}} e^{ik \cdot x } (1 + \mathcal{O}(|k|^2)) \widehat{\rho}^{0}(t,k) \, dk 
		= \widetilde{\mathcal{P}}_{3} (i\partial_{x}) \rho^{0} - \widetilde{\mathcal{P}}_{2} (i\partial_{x})\rho^{0}
	\]
	where $\widetilde{\mathcal{P}}_{3}$ denotes the smooth version of the Fourier multiplier $1_{\{ |k| \lesssim 1 \}}$ and $\widetilde{\mathcal{P}}_{2}$ denotes the Fourier multiplier which is sufficiently smooth, $\widetilde{\mathcal{P}}_{2}(k) = 0$ for $|k| \gtrsim 1$, and $|\widetilde{\mathcal{P}}_{2}(k)| \lesssim |k|^2$.
	Now
$$	\begin{aligned}
		\rho^{0} + \sum_{\pm} G^{osc}_{\pm} \star_{t,x} \rho^{0}
		&= \sum_{\pm} \frac{1}{\lambda_{\pm}(i\partial_{x})} G^{osc}_{\pm}(t) \star_{x} \rho^0 (0) + \sum_{\pm} \frac{1}{\lambda_{\pm}(i\partial_{x})}  G^{osc}_{\pm} \star_{t,x} \partial_{t} \rho^{0} \\
			&\quad+ \widetilde{\mathcal{P}}_{2} (i\partial_{x})\rho^{0} +(1- \widetilde{\mathcal{P}}_{3} (i\partial_{x})) \rho^{0} .
	\end{aligned}$$
	The time decay of $\partial_{t} \rho^{0}$ is still not enough to get the decay of $G^{osc}_{\pm} \star_{t,x} \rho^{0}$, so we integrate by parts in time once again to obtain
	\[
		G^{osc}_{t,x} \star_{t,x} \partial_{t} \rho^{0}
		= \frac{1}{\lambda_{\pm}(i \partial_{x})} 
			\left[
				G^{osc}_{\pm}(t) \star_{x} \partial_{t} \rho^0 (0) - G^{osc}_{\pm}(0) \star_{x} \partial_{t} \rho^{0}(t) + G^{osc}_{\pm} \star_{t,x} \partial_{t}^2 \rho^{0}
			\right].
	\]
	Using the fact that $\lambda_{\pm}(0)^2 = - \tau_{0}^2$ and $a_{\pm}(0) = \pm \frac{i\tau_{0}}{2}$, we get
	\[
		\sum_{\pm} \frac{1}{\lambda_{\pm}(i\partial_{x})^2} G^{osc}_{\pm}(0,x) 
		= \sum_{\pm} \int e^{ik\cdot x} \frac{a_{\pm}(k)}{\lambda_{\pm}(k)^2} \, dk 
		= \int_{\{|k| \lesssim 1\}} e^{ik \cdot x } \mathcal{O}(|k|^2) \, dk,
	\]
	noting that the zeroth order terms cancelled each other. Therefore 
	\begin{multline*}
		\rho^{0} + \sum_{\pm} G^{osc}_{\pm} \star_{t,x} \rho^{0}
		= \sum_{\pm} \frac{1}{\lambda_{\pm}(i\partial_{x})} G^{osc}_{\pm}(t) \star_{x} \rho^0 (0) + \sum_{\pm} \frac{1}{\lambda_{\pm}(i\partial_{x})}  G^{osc}_{\pm} (t)\star_{x} \partial_{t} \rho^{0}(0) \\
		+ \widetilde{\mathcal{P}}_{2} (i\partial_{x})\rho^{0} +(1- \widetilde{\mathcal{P}}_{3} (i\partial_{x})) \rho^{0} + \mathcal{P}_{4} (i \partial_{x}) \partial_{t} \rho^{0} + \sum_{\pm} \frac{1}{\lambda_{\pm}(i \partial_x)^2} G^{osc}_{\pm} \star_{t,x} \partial_{t}^2 \rho^{0}
	\end{multline*}
	where $\mathcal{P}_{4} (i\partial_{x})$ denotes the Fourier multiplier which is sufficiently smooth, compactly supported in $\{ |k| \lesssim 1\}$, and $|\mathcal{P}_{4}(k)| \lesssim |k|^2$. Setting $\mathcal{P}_{2} := \widetilde{\mathcal{P}}_{2}  + (1 - \widetilde{\mathcal{P}}_{3})$, the proof of the proposition is complete.
\end{proof}

\subsection{Proof of Theorem \ref{theo-main}}

We now give the proof of Theorem \ref{theo-main}. Recall the representation \eqref{eq:rep}. First we bound $\rho^{osc}_{\pm}$. Using \eqref{Gosc-phys}, we have
\[
	\| G^{osc}_{\pm}(t) \star_{x} \rho^{0}(0) \|_{L^{p}_{x}} \lesssim t^{-d(1/2 - 1/p)}
\]
and
\[
	\| G^{osc}_{\pm}(t) \star_{x} \partial_{t}\rho^{0}(0) \|_{L^{p}_{x}} \lesssim t^{-d(1/2 - 1/p)}
\]
for $p \in [2,\infty]$.
On the other hand,
\begin{equation*}
	\begin{aligned}
		\| G^{osc}_{\pm} \star_{t,x} \partial_{t}^2 \rho^{0} \|_{L^{\infty}_{x}}
		&\lesssim \int_{0}^{t/2} \| G^{osc}_{\pm} (t-s) \|_{L^{\infty}_{x}} \| \partial_{t}^2 \rho^{0}(s) \|_{L^{1}_{x}} \, ds+ \int_{t/2}^{t} \| G^{osc}_{\pm} (t-s) \|_{L^{2}_{x}} \| \partial_{t}^2 \rho^{0}(s) \|_{L^{2}_{x}} \, ds \\
		&\lesssim \int_{0}^{t/2} (t-s)^{-d/2} \langle s \rangle^{-2} \, ds + \int_{t/2}^{t} \langle s \rangle^{-d/2 - 2} \, ds \\
		&\lesssim t^{-d/2}, 
	\end{aligned}
\end{equation*}
and
\begin{equation*}
	\begin{aligned}
		\| G^{osc}_{\pm} \star_{t,x} \partial_{t}^2 \rho^{0} \|_{L^{2}_{x}}
		\lesssim \int_{0}^{t} \| G^{osc}_{\pm}(t-s)\|_{L^{2}_{x}} \| \partial_{t}^2 \rho^{0}(s) \|_{L^{1}_{x}} \,  ds \lesssim \int_{0}^{t} \langle s \rangle^{-2} \, ds \lesssim 1.
	\end{aligned}
\end{equation*}
These imply $L^{p}$ bound on $G^{osc}_{\pm} \star_{t,x} \partial_{t}^2 \rho^{0}$ for $p \in [2,\infty]$. 
Finally, $1/\lambda_{\pm}(i\partial_{x})$ is a bounded operator on $L^{p}$ for $1 \le p \le \infty$ since $1/\lambda_{\pm}(k)$ is sufficiently regular and compactly supported. Combining these, we get
	\[
		\| \rho^{osc}_{\pm}(t) \|_{L^{p}_{x}} \lesssim t^{-d (1/2 - 1/p)}
	\]
for $p \in [2, \infty]$.
Next, we bound $\rho^{r}$.
By a similar argument, for $1 \le p \le \infty$,
\begin{equation*}
	\begin{aligned}
		\| G^{r} \star_{t,x}  \rho^{0} \|_{L^{p}_{x}}
		&\lesssim \int_{0}^{t/2} \| G^{r} (t-s) \|_{L^{p}_{x}} \| \rho^{0}(s) \|_{L^{1}_{x}} ds + \int_{t/2}^{t} \| \Delta_x^{-1}G^{r} (t-s) \|_{L^{1}_{x}} \| \Delta_x\rho^{0}(s) \|_{L^{p}_{x}} ds \\
		&\lesssim \int_{0}^{t/2} \langle t-s \rangle^{-3 - d (1-1/p)} \, ds + \int_{t/2}^{t} \langle t-s 	\rangle^{-1} \langle s \rangle^{-2-d(1-1/p)} \, ds \\
		&\lesssim t^{-2-d(1-1/p)} \log (1+t).
	\end{aligned}
\end{equation*}

Recall from Proposition \ref{prop-densityrep} that $\mathcal{P}_{2} = \widetilde{\mathcal{P}}_{2} + (1- \widetilde{\mathcal{P}}_{3})$ where $\widetilde{\mathcal{P}}_{2}(k)$ is sufficiently regular, compactly supported and $1- \widetilde{\mathcal{P}}_{3}(k)$ is a smoothed version of $1_{\{|k| \gtrsim 1\}}$. Hence $\mathcal{P}_{2}(i \partial_{x})$ is bounded on $L^{p}$ for $1 \le p \le \infty$. Similarly $\mathcal{P}_{4}(i \partial_{x})$ is also bounded on $L^{p}$ for $1 \le p \le \infty$.
Therefore, recalling \eqref{bd-P2P4}, we have
\[
	\| \mathcal{P}_{2} (i\partial_{x}) \rho^{0}(t) \|_{L^{p}_{x}} \lesssim \| \partial_x^2 \rho^{0}(t) \|_{L^{p}_{x}} \lesssim \langle t \rangle^{-2-d(1-1/p)}
\]
and
\[
	\| \mathcal{P}_{4} (i\partial_{x}) \partial_{t}\rho^{0}(t) \|_{L^{p}_{x}} \lesssim \| \partial_{t} \partial_x \rho^{0}(t) \|_{L^{p}_{x}} \lesssim \langle t \rangle^{-2-d(1-1/p) }.
\]
Combining all these estimates, we obtain that
\[
	\| \rho^{r}(t) \|_{L^{p}_{x}} \lesssim \langle t \rangle^{-2-d(1-1/p)} \log(1+t)
\]
for $p \in [1,\infty]$, which concludes the proof of Theorem \ref{theo-main}.

\bibliographystyle{plain}

\end{document}